\title{\vspace{-1.5cm}Haas' Theorem revisited}
\author{\vspace{0.5cm}Beno\^{\i}t Bertrand, Erwan Brugall\'e, and Arthur Renaudineau}
\institution{Institut de math\'ematiques de Toulouse -- IUT de Tarbes, 1 rue Lautr\'eamont -- CS 41624, 65016 Tarbes, France 
}\\
\email{benoit.bertrand@math.univ-toulouse.fr}}
\institution{\'Ecole polytechnique, CNRS, Universit\'e Paris-Saclay, 91128 Palaiseau Cedex, France}\\
\email{erwan.brugalle@math.cnrs.fr}}
\institution{Eberhard Karls Universit\"at T\"ubingen, Fachbereich Mathematik, Institut f\"ur Geometrie, Germany}\\
\email{arren@math.uni-tuebingen.de}}
\date{\vspace{-5ex}} 
\journal{\'Epijournal de G\'eom\'etrie Alg\'ebrique} 
\newtheorem{theorem}{Theorem}[section]
\newtheorem{lemma}[theorem]{Lemma}
\newtheorem{remark}[theorem]{Remark}
\newtheorem{corollary}[theorem]{Corollary}
\newtheorem{example}[theorem]{Example}
\newtheorem{definition}[theorem]{Definition}
\newtheorem{proposition}[theorem]{Proposition}
\newcommand{\R}{\mathbb{R}}
\newcommand{\RR}{\mathbb{R}}
\newcommand{\RP}{\mathbb{RP}}
\newcommand{\Z}{\mathbb{Z}}
\newcommand{\C}{\mathbb{C}}
\newcommand{\A}{\mathcal{A}}
\newcommand{\CC}{\mathbb{C}}
\renewcommand{\SS}{\mathcal{S}}
\renewcommand{\epsilon}{\varepsilon}
\renewcommand{\P}{\mathcal{P}}
\newcommand{\Ve}{\text{Vert}}
\newcommand{\Ed}{\text{Edge}}
\newcommand{\val}{\text{val}}
\newcommand{\td}{{\text{''}}}
\newcommand{\tg}{{\text{``}}}
\newcommand{\Log}{{\text{Log}}}
\newcommand{\Arg}{{\text{Arg}}}
\newcommand{\conj}{\mathop{\mathrm{conj}}}
\newcommand{\Id}{\mathop{\mathrm{Id}}}
\newcommand\restrict[1]{\raisebox{-.9ex}{$\big|$}_{ ^{#1}}}
\newcommand\restr[2]{{
  \left.\kern-\nulldelimiterspace 
  #1
  \vphantom{\big|} 
  \right|_{#2} 
}}
\newcommand{\RNum}[1]{\uppercase\expandafter{\romannumeral #1\relax}}
\begin{document}


\maketitle



\begin{prelims}

\vspace{-0.55cm}

\def\abstractname{Abstract}
\abstract{Haas' theorem describes all patchworkings of a given non-singular plane tropical curve $C$ giving rise to a maximal real algebraic curve.
The space of such patchworkings
is naturally 
a linear subspace $W_C$ of the $\Z/2\Z$-vector space
$\overrightarrow \Pi_C$
generated by the bounded edges of $C$, and whose origin is the
Harnack patchworking. 
The aim of this note is to provide an interpretation of affine subspaces  of
$\overrightarrow \Pi_C $ parallel to $W_C$.
\\
To this purpose, we
 work in the setting of abstract graphs rather than plane tropical curves.
We introduce a 
topological
surface $S_\Gamma$ above a trivalent graph
  $\Gamma$,
and consider a suitable affine space $\Pi_\Gamma$ of
 real structures on $S_\Gamma$ compatible with $\Gamma$.
We 
identify the vector subspace 
$W_\Gamma$
of 
$\overrightarrow \Pi_\Gamma$ 
characterizing 
real structures inducing
the same
 action on 
$H_1(S_\Gamma,\Z/2\Z)$.
We then deduce from this statement another proof of Haas' original result.}

\keywords{Patchworking; Viro method; Hilbert 16th problem; tropical curves; topology of involutions; maximal curves}

\MSCclass{14P25; 14T05}

\vspace{-0.05cm}

\languagesection{Fran\c{c}ais}{%

\vspace{-0.05cm}
\textbf{Titre. Th\'eor\`eme de Haas revisit\'e}
\commentskip
\textbf{R\'esum\'e.}
Le th\'eor\`eme de Haas d\'ecrit tous les patchworks d'une courbe tropicale plane lisse donn\'ee $C$ donnant lieu \`a une courbe alg\'ebrique r\'eelle maximale. L'espace de ces patchworks est naturellement un sous-espace lin\'eaire $W_C$ du $\Z/2\Z$-espace vectoriel
$\overrightarrow \Pi_C$
engendr\'e par les ar\^etes born\'ees de $C$, et dont l'origine est le patchwork de Harnack. 
Le but de cette note est de donner une interpr\'etation des sous-espaces affines de 
$\overrightarrow \Pi_C $ parall\`eles~\`a~$W_C$.
\\
Pour ce faire, nous nous pla\c{c}ons dans le cadre des graphes
abstraits plut\^ot que celui des courbes tropicales planes. Nous introduisons une surface topologique $S_\Gamma$ au dessus d'un graphe trivalent $\Gamma$,
et consid\'erons un espace affine ad\'equat $\Pi_\Gamma$ de structures r\'eelles sur $S_\Gamma$ compatibles avec $\Gamma$.
Nous identifions le sous-espace vectoriel
$W_\Gamma$
de
$\overrightarrow \Pi_\Gamma$ 
caract\'erisant des structures r\'eelles induisant la m\^eme action sur
$H_1(S_\Gamma,\Z/2\Z)$.
Nous d\'eduisons alors de cet \'enonc\'e une autre d\'emonstration du r\'esultat original de Haas.}

\end{prelims}


\newpage

\setcounter{tocdepth}{1}
\tableofcontents

\section{Introduction}

Haas classified  in his thesis ~\cite{Haa2}
all unimodular combinatorial patchworkings
(see \cite{Viro,IV2})
producing a real algebraic
$M$-curve (i.e. a non-singular compact real algebraic curve of
genus $g$ whose
real part has $g+1$ connected components). 
In the tropical reformulation of
patchworking  in
terms of \emph{twist-admissible} 
sets of edges of a non-singular plane
tropical curve $C$ given in \cite{BIMS15},
the set of all possible patchworkings with this  given underlying
 tropical curve $C$ is naturally  a vector
 space  $\overrightarrow \Pi_C$ over $\Z/2\Z$
 and
Haas' Theorem can be interpreted
as follows:
 the space of all such patchworkings  producing an $M$-curve is
 a
well identified and easily described
 subvector space  $W_C$ of  
 $\overrightarrow \Pi_C$
 (see Section \ref{sec:viro}).
\medskip
In this note we address the question of 
 interpreting  affine subspaces
 of $\overrightarrow \Pi_C $ parallel to $W_C$.
To this purpose, it is suitable
to follow
 Klein's approach
and to work in the framework of 
 abstract real topological surfaces, i.e. oriented topological surfaces 
equipped with an orientation-reversing continuous involution, 
 rather than restricting to  real algebraic curves in a given toric
 surface (see~\cite{Klein-harnack-bound}).
Accordingly, we deal with abstract graphs rather than with plane
tropical curves.
Given an abstract graph $\Gamma$ with only 3-valent and 1-valent
vertices, we
construct a topological surface $S_\Gamma$ decomposed into a union of
disks, cylinders, and pairs of pants 
(this is just a 
variation on standard pair of pants decompositions of a
surface). Next, given a continuous involution $\tau:\Gamma\to\Gamma$, we  define
\emph{real structures} 
 above the pair
$(\Gamma,\tau)$, which are roughly real  
structures on $S_\Gamma$ compatible with the decomposition induced by
$\Gamma$ together with $\tau$ (see Section \ref{sec:real graphs} for precise definitions).
The set of real structures 
above
$(\Gamma,\tau)$ is naturally
an affine space $\Pi_{(\Gamma,\tau)}$
over $\Z/2\Z$ whose direction  $\overrightarrow{\Pi_{(\Gamma,\tau)}}$
has for basis the set of edges of the quotient graph
$\Gamma/\tau$ 
that are adjacent to two 3-valent vertices (see Lemma
\ref{lem:patch aff}).
As any involution, a real structure on  $S_\Gamma$ induces an
action on $H_1(S_\Gamma;\Z/2\Z)$
 and  the main result of this note
can be summarized as follows.
\begin{center}
 \emph{Two real structures
   above $(\Gamma,\tau)$ induce the same action on 
   $H_1(S_\Gamma;\Z/2\Z)$ if and only if they differ by an element of a 
   given 
   vector subspace $W_{(\Gamma,\tau)}$ of
   $\overrightarrow{\Pi_{(\Gamma,\tau)}}$  having a simple description}. 
\end{center}
This statement is proved in Theorem~\ref{thm:main} after the definition of $W_{(\Gamma,\tau)}$ at the begining of Section~\ref{sec:action}.
 We show in
Proposition~\ref{prop:equiv W} that $W_{(\Gamma,\tau)}$  admits
an alternative description which corresponds to Haas' description of
the above-mentioned vector space $W_C$ in the special case of 
a non-singular plane tropical curve $C$.
The connection of  Theorem~\ref{thm:main} and Proposition~\ref{prop:equiv W} 
with Haas' Theorem then comes  from  the fact that
 a real
 topological surface with a non-empty real part
 is maximal if and only if the corresponding
 induced action on the first homology group
 of the surface is trivial.
In particular, we recover Haas' Theorem as a corollary of our main
results combined with
standard results in tropical geometry (see Section~\ref{sec:haas}).

\medskip
Our main motivation for the present work was the possible generalisations of Haas' Theorem in higher dimensions.
Generalising a statement
usually first requires
to identify the suitable notions coming into play and allowing
a suitable
formulation of the original statement.
Both in its original formulation and in its  ``twist-admissible''
tropical reformulation, Haas' Theorem 
involves, sometimes implicitely,
several features that a priori make sense only for
curves.
As an example, it is based on the
existence of Harnack distribution of signs discovered by Itenberg (see
\cite{IV2}), which has no known analogue
yet in higher dimensions.
The existence of such Harnack distribution of signs on $\Z^2$ 
is precisely the
 fact
 that naturally turns
the space of all patchworkings of a non-singular
tropical curve in $\R^2$ in a vector  space rather than an
affine space 
as  in the case of abstract graphs: it provides a canonical
patchworking on any non-singular
tropical curve in $\R^2$, which 
in addition
produces an $M$-curve.
By focusing on abstract graphs rather than on embedded tropical curves,
and on 
the action on homology induced by a real structure
rather than the number of connected components of its real part, we
place Haas' Theorem in a wider perspective. 
There, it turns out to be a corollary of a more general statement which seems to us more likely
to have a higher dimensional analogue.

\subsubsection*{A few words on the context}
Despite the surprising elegance of the complete description of maximal unimodular patchworkings, Haas' Theorem unfortunately only ever appeared in his thesis~\cite{Haa2}. We briefly recap its
 origin, rooted in the first part of Hilbert's
16th problem. This latter concerns the classification, up to
isotopy, 
 of all possible mutual positions of the connected  components of 
 the real part of a
 non-singular real algebraic curve in $\RP^2$ of a given degree $d$. If $d$ is even, every connected component of the real part of such a curve
 is called an \emph{oval}, and bounds a
 disc which is called the \emph{interior} of the oval.  One says that an oval is \emph{even} (resp. \emph{odd}) if it 
 is contained in the interior of an even (resp. odd) number of ovals. The following Ragsdale conjecture played an important role in  subsequent developments in real algebraic geometry, and remains one 
challenging open
question in the case of $M$-curves.

\bigskip
\noindent {\bf Conjecture} (Refined
    Ragsdale conjecture,  \cite{Rag,Pet}){\bf .} {\em For any  non-singular
  real algebraic curve of even degree $2k$ in $\RP^2$
  having $p$ even  and $n$ odd ovals, one has 
$$
p\leq \dfrac{3k(k-1)}{2}+1\qquad  \mbox{and}\qquad  n\leq \dfrac{3k(k-1)}{2}+1.
$$}

\medskip
A series of counterexamples to this conjecture have been constructed
since the 90's \cite{I3,Haa,Br2}. Nevertheless, no counterexample is
known yet among
$M$-curves.
As an application of his classification of maximal unimodular combinatorial
patchworkings, Haas proved in his thesis the following theorem.

\bigskip
\noindent
{\bf Theorem} (Haas, {\cite[Theorem 12.4.0.12 and Proposition 13.5.0.13]{Haa2}}){\bf .}
{\em Let $A$ be a non-singular real algebraic $M$-curve in $\RP^2$
constructed by a unimodular combinatorial patchworking. 
 If $A$ has $p$ even and $n$ odd ovals, then one has 
$$
p\leq \dfrac{3k(k-1)}{2}+1 \qquad \mbox{and}\qquad  n\leq \dfrac{3k(k-1)}{2}+4.
$$
Furthermore, such
a curve having more than $\dfrac{3k(k-1)}{2}+1$ odd ovals would have exactly $n= \dfrac{3k(k-1)}{2}+4$ such ovals.}

\bigskip

 As far as we know, 
 this is the only known result
 in the direction of Ragsdale conjecture for maximal curves.

\subsubsection*{Organisation of the paper}We start by defining real structures above an abstract graph in Section~\ref{sec:real patch}. In
 Section~\ref{sec:action}, we  prove our main statement
 (Theorem~\ref{thm:main})
  and Proposition~\ref{prop:equiv W} which
 relates our definition of $W_{(\Gamma,\tau)}$ to Haas' description
 of the vector space $W_C$.
 We end this paper by recalling in Section \ref{sec:patch&haas}
 the combinatorial patchworking construction and Haas' Theorem, and by 
 deducing this latter from results from Section~\ref{sec:action}.

\subsubsection*{Acknowledgements}Part of this work has been done during 
visits of B.B. and A.R. at 
Centre Math\'ematiques Laurent Schwartz, \'Ecole Polytechnique. We thank
this institution for excellent working conditions.
 We also thank Ilia Itenberg for fruitful discussions,
as well as 
  anonymous referees for many valuable
comments on an earlier version of this paper.
 
\subsubsection*{Convention and notation}
 A real topological surface $(S,\tau)$ is 
 an oriented topological surface $S$ 
 equipped with
 an orientation-reversing continuous involution
 $\tau:S\to S$,
 called a \emph{real structure}.  
 Given a  real surface $(S,\tau)$, we denote by $\R S$ the \emph{real part} of $S$, i.e. the set
 of fixed points of $\tau$. It is a submanifold of $S$ of dimension 1.
 If $S$ is compact and of genus $g$,
then $\R S$ has at most $g+1$ connected components (see~\cite{Klein-harnack-bound}) and we say that $(S,\tau)$ is \emph{maximal} if it has $g+1$ connected components. 
If $(S,\tau)$ is a compact real surface and $A\subset S$ is a real finite
set of points, we say that  $(S\setminus A,\tau)$ is maximal if
$(S,\tau)$ is.

\smallskip
Given a finite graph $\Gamma$, 
 we denote 
by $\Ve(\Gamma)$ the set of its vertices, by $\Ve^\infty(\Gamma)$ the set of its
1-valent vertices, and we set  $\Ve^0(\Gamma)=\Ve(\Gamma)\setminus
\Ve^\infty(\Gamma)$. By definition, the valency of a vertex
 $v\in\Ve(\Gamma)$, denoted by $\val(v)$,
 is the number of edges of $\Gamma$ adjacent to $v$. We also denote by  
$\Ed(\Gamma)$ the set of edges of $\Gamma$, and by $\Ed^0(\Gamma)$ the
 set of edges 
of $\Gamma$  adjacent to two vertices  in $\Ve^0(\Gamma)$. 
Throughout the text
we   identify a graph 
 with any of its topological realisations in which edges are open segments.

\section{Real structures above real graphs}\label{sec:real patch}
\subsection{Surfaces associated to a trivalent graph}\label{sec:graph surf}

We will call \emph{trivalent} a graph such that any of its  vertices
is either $3$-valent or $1$-valent,
thus authorizing that it also has leaves. 
 Given a  trivalent graph $\Gamma$, we construct a topological surface 
 $S_\Gamma$ as follows.
 Recall that a pair of pants is an oriented sphere with three open disks removed, in particular it has three boundary components with induced orientation.

\begin{enumerate}
\item To each vertex $v$ of $\Gamma$, we associate a topological surface $S_v$,
which is either a pair of pants if $v$ is 3-valent, or an oriented
closed disk if 
$v$ is 1-valent. Furthermore, we choose 
 a one to one correspondence between boundary components of $S_v$ and
 edges of $\Gamma$ 
adjacent to $v$; the boundary component corresponding to $e$ is
denoted by $\gamma_{v,e}$. 

\item To each edge $e$ of $\Gamma$, we associate an oriented cylinder $S_e$, and a
  one to one correspondence between boundary components of $S_e$ and
  vertices adjacent to $e$; the boundary component corresponding to $v$ is
denoted by $\gamma_{e,v}$. 

\item To each pair $(v,e)\in \Ve(\Gamma)\times \Ed(\Gamma)$ such that $e$ is
  adjacent to $v$, we associate 
 an orientation-reversing homeomorphism $\phi_{v,e}: \gamma_{v,e}\to
\gamma_{e,v}$. 

\end{enumerate}

The surface $S_\Gamma$ is obtained by gluing all surfaces $S_v$ and
$S_e$ via the maps $\phi_{v,e}$. It is
a closed oriented topological surface.
Clearly, the surface $S_\Gamma$ is not uniquely defined by $\Gamma$, but also depends
on  the choice of the homeomorphisms $\phi_{v,e}$
and of the surfaces $S_v$. However, different
 surfaces obtained by different choices are 
 homeomorphic, and such an homeomorphism is canonical up to composition
 by Dehn twists 
along the circles $\gamma_{e,v}$ in $S_\Gamma$.

\begin{example}\label{ex:genus 2}
{\rm  We depicted in Figure \ref{fig:genus 2} a
  trivalent graph $\Gamma$ and the
corresponding surface $S_\Gamma$.}
\begin{figure}[h!]
\begin{center}
\begin{tabular}{ccc}
\includegraphics[width=5cm, angle=90]{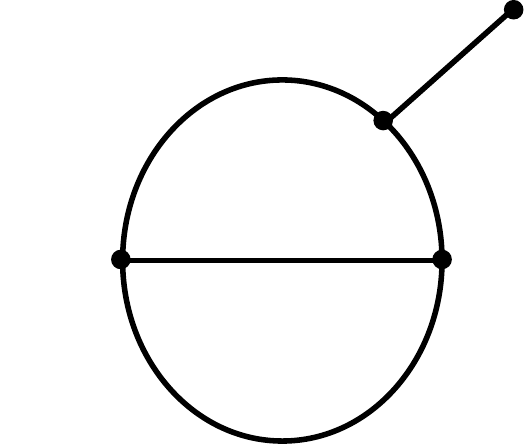}
\put(-57,22){$v_1$}
\put(-57,128){$v_2$}
\put(-105,100){$v_3$}
\put(-133,135){$v_4$}
&\hspace{8ex} 
&\includegraphics[width=5.5cm, angle=90]{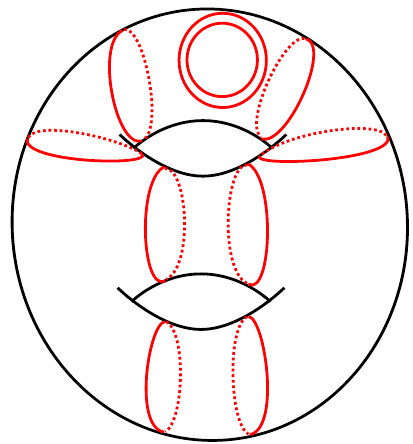}
\put(-70,20){$S_{v_1}$}
\put(-70,130){$S_{v_2}$}
\put(-160,60){$S_{v_3}$}
\put(-152,81){$S_{v_4}$}
\\   $\Gamma$
&&  $S_\Gamma$
\end{tabular}
\end{center}
\caption{A trivalent graph $\Gamma$ and the corresponding surface $S_\Gamma$}
\label{fig:genus 2}
\end{figure}
\end{example}

\subsection{The groups $H_{1,0}(S_\Gamma;\Z/2\Z)$ and $H_{0,1}(S_\Gamma;\Z/2\Z)$}
Let $\Gamma$ be a 
 trivalent graph.
Each
circle $\gamma_{e,v}$ defines an element in $H_1(S_\Gamma;\Z/2\Z)$.
If $v$ and $v'$ are the two vertices adjacent to $e$, both circles $\gamma_{e,v}$ and $\gamma_{e,v'}$ define the same element in $H_1(S_\Gamma;\Z/2\Z)$, that we denote by $\gamma_{e}$. 
We define $H_{1,0}(S_\Gamma;\Z/2\Z)$ to be the subgroup of $H_1(S_\Gamma;\Z/2\Z)$
generated by all the loops $\gamma_{e}$, and we set
$$H_{0,1}(S_\Gamma;\Z/2\Z)=H_{1}(S_\Gamma;\Z/2\Z)/H_{1,0}(S_\Gamma;\Z/2\Z).$$

\noindent Let $\alpha_1,\ldots,\alpha_g$ be a basis of $H_1(\Gamma;\Z/2\Z)$.
To each 
1-cycle  $\alpha_i$, 
 we associate a class $\gamma_{\alpha_i}$ in $H_1(S_\Gamma;\Z/2\Z)$ as follows:

\begin{enumerate}
\item for each $e\in\Ed(\Gamma)$ such that $e\subset\alpha_i$, choose a
  $1$-chain $\eta_e$ in $S_e$ joining the two boundary components of $S_e$. Denote by $\eta_{v,e}$
the boundary point of $\eta_e$ contained in $\gamma_{e,v}$;

\item for each $v\in\Ve(\Gamma)$ such that $v\in\alpha_i$, choose a $1$-chain $\eta_v$  in $S_v$ 
 joining the points $\eta_{v,e}$ and $\eta_{v,e'}$, 
 where $e$ and $e'$ are the two edges contained in $\alpha_i$ and adjacent to $v$.
\end{enumerate}

We denote by $\gamma_{\alpha_i}\in H_1(S_\Gamma;\Z/2\Z)$ the class defined by 
the union of all $1$-chains $\eta_e$ and $\eta_v$ above.
Further, we extend 
the map $\alpha_i\mapsto\gamma_{\alpha_i}$ by linearity
to 
an injective $\Z/2\Z$-linear
map 
$$\begin{array}{ccc}
H_{1}(\Gamma;\Z/2\Z) & \longrightarrow &
H_{1}(S_\Gamma;\Z/2\Z)
\\ \alpha &\longmapsto & \gamma_\alpha
\end{array}.$$
Note that $\gamma_\alpha$ is not uniquely defined by $\alpha$,
but is well defined only up to an element in $H_{1,0}(S_\Gamma;\Z/2\Z)$. In
other words, there is a natural and well defined 
 injective
$\Z/2\Z$-linear map (or group morphism)
 $$H_{1}(\Gamma;\Z/2\Z) 
 \longrightarrow H_{0,1}(S_\Gamma;\Z/2\Z)$$
 that associates to $\alpha$ the class realised by $\gamma_\alpha$ in
$H_{0,1}(S_\Gamma;\Z/2\Z)$. 
The next lemma shows in particular that this map is an isomorphism.

\begin{example}
{\rm We consider the trivalent graph from Example \ref{ex:genus 2}.
We depicted in Figure \ref{fig:genus 2 bis} the 1-cycle $\gamma_e$,
and two possible 1-cycles $\gamma_\alpha$, denoted by $\gamma_\alpha$
and $\gamma'_\alpha$ (recall that the
decomposition of $S_\Gamma$ into the union of the surfaces $S_v$ and $S_e$
is depicted in Figure \ref{fig:genus 2}).}
\begin{figure}[h!]
\begin{center}
\begin{tabular}{ccc}
\includegraphics[width=5cm, angle=90]{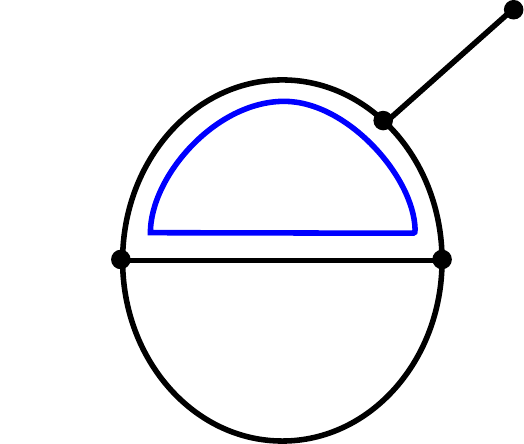}
\put(5,75){$e$}
\put(-80,75){$\alpha$}
&\hspace{8ex} 
&\includegraphics[width=5.5cm, angle=90]{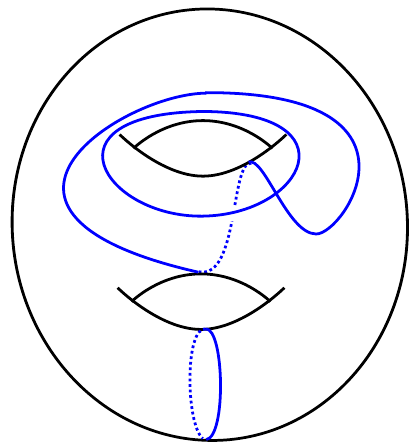}
\put(5,75){$\gamma_e$}
\put(-105,55){$\gamma_\alpha$}
\put(-80,25){$\gamma'_\alpha$}
\\   $\Gamma$
&&  $S_\Gamma$
\end{tabular}
\end{center}
\caption{Lifting  cycles}
\label{fig:genus 2 bis}
\end{figure}
\end{example}

\begin{lemma}\label{lem:hodge dec}
Let $\Gamma$ be a trivalent graph with $b_1(\Gamma)=g$.
Then both $\Z/2\Z$-vector spaces  $H_{1,0}(S_\Gamma;\Z/2\Z)$ and
$H_{0,1}(S_\Gamma;\Z/2\Z)$ have dimension  $g$.
Furthermore, the intersection form on $H_1(S_\Gamma;\Z/2\Z)$ vanishes on
$H_{1,0}(S_\Gamma;\Z/2\Z)$. 
\end{lemma}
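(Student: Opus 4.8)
The plan is to compute the total dimension of $H_1(S_\Gamma;\Z/2\Z)$, then squeeze $\dim H_{1,0}(S_\Gamma;\Z/2\Z)$ between $g$ and $g$; since $H_{0,1}$ is the quotient, its dimension is then forced to be $g$ as well, and the vanishing of the intersection form will follow from an elementary observation about embedded circles.

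First I would determine the genus of $S_\Gamma$ by an Euler characteristic count. As $S_\Gamma$ is glued from the pieces $S_v$ and $S_e$ along the circles $\gamma_{e,v}$, and a circle has vanishing Euler characteristic, additivity gives $\chi(S_\Gamma)=\sum_v\chi(S_v)+\sum_e\chi(S_e)$. A pair of pants contributes $-1$, a disk $+1$, and a cylinder $0$, so $\chi(S_\Gamma)=|\Ve^\infty(\Gamma)|-|\Ve^0(\Gamma)|$. Combining $3|\Ve^0(\Gamma)|+|\Ve^\infty(\Gamma)|=2|\Ed(\Gamma)|$ with $g=b_1(\Gamma)=|\Ed(\Gamma)|-|\Ve(\Gamma)|+1$ (taking $\Gamma$ connected, the general case being analogous) yields $|\Ve^0(\Gamma)|-|\Ve^\infty(\Gamma)|=2g-2$, whence $\chi(S_\Gamma)=2-2g$ and $\dim_{\Z/2\Z}H_1(S_\Gamma;\Z/2\Z)=2g$.

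For the upper bound $\dim H_{1,0}\le g$, I would read off the relations among the generators $\gamma_e$ provided by the pieces $S_v$: as a $2$-chain in $S_\Gamma$, a pair of pants $S_v$ with adjacent edges $e_1,e_2,e_3$ has boundary $\gamma_{e_1}+\gamma_{e_2}+\gamma_{e_3}$, which is thus null-homologous, while a disk $S_v$ adjacent to $e$ yields $\gamma_e=0$. These relations constitute the image of the incidence map $R\colon(\Z/2\Z)^{\Ve(\Gamma)}\to(\Z/2\Z)^{\Ed(\Gamma)}$, $v\mapsto\sum_{e\ni v}e$, whose kernel (for $\Gamma$ connected) is the line spanned by the sum of all vertices; hence $\dim H_{1,0}\le|\Ed(\Gamma)|-(|\Ve(\Gamma)|-1)=g$.

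The heart of the argument, and the step I expect to be the main obstacle, is the matching lower bound $\dim H_{1,0}\ge g$, for which I would exhibit classes dual to the $\gamma_e$ under the intersection form. Fix a spanning tree of $\Gamma$; its $g$ complementary edges $e_1,\dots,e_g$ determine fundamental cycles $\alpha_1,\dots,\alpha_g$ which form a basis of $H_1(\Gamma;\Z/2\Z)$ and satisfy $e_j\subset\alpha_i\iff i=j$. The lifted class $\gamma_{\alpha_i}$ meets the core circle of $\gamma_{e_j}$ transversally exactly once when it traverses the cylinder $S_{e_j}$, and not at all otherwise, so $\gamma_{\alpha_i}\cdot\gamma_{e_j}=\delta_{ij}$. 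This nondegenerate pairing shows both that $\gamma_{e_1},\dots,\gamma_{e_g}$ are linearly independent, giving $\dim H_{1,0}\ge g$, and that the $\gamma_{\alpha_i}$ are independent, so that the stated map $H_1(\Gamma;\Z/2\Z)\to H_{0,1}(S_\Gamma;\Z/2\Z)$ is an isomorphism. Together with the previous bound this forces $\dim H_{1,0}=g$ and $\dim H_{0,1}=2g-g=g$. Finally, for the last assertion, cores of two distinct edges are realized by disjoint circles, while any embedded circle on an orientable surface has zero self-intersection modulo $2$; hence the intersection form vanishes on all the generators $\gamma_e$, and therefore on all of $H_{1,0}(S_\Gamma;\Z/2\Z)$.
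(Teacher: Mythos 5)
Your proof is correct, and its second half --- the lower bound via the pairing $\gamma_{\alpha_i}\cdot\gamma_{e_j}=\delta_{ij}$ coming from a spanning tree, and the vanishing of the intersection form on the disjoint embedded circles $\gamma_e$ --- is in substance what the paper does: the authors observe that the subspace $U$ spanned by the lifts $\gamma_\alpha$ meets the orthogonal of $H_{1,0}(S_\Gamma;\Z/2\Z)$ trivially, which is exactly your dual-basis computation stated without coordinates. Where you genuinely diverge is the upper bound $\dim H_{1,0}(S_\Gamma;\Z/2\Z)\le g$. The paper gets it in one line from the two facts that $H_{1,0}(S_\Gamma;\Z/2\Z)$ is isotropic (the last assertion of the lemma, which has to be proved anyway) and that the intersection form is non-degenerate on the $2g$-dimensional space $H_1(S_\Gamma;\Z/2\Z)$: an isotropic subspace for a non-degenerate form has at most half the total dimension. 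You instead present $H_{1,0}(S_\Gamma;\Z/2\Z)$ explicitly as a quotient of $(\Z/2\Z)^{\Ed(\Gamma)}$ by the boundary relations of the $2$-chains $S_v$, i.e. by the image of the vertex--edge incidence map, and count dimensions. Your route is more hands-on and does not presuppose the genus of $S_\Gamma$ at that step, although you still need your Euler-characteristic computation (which the paper leaves implicit) to get $\dim H_{0,1}(S_\Gamma;\Z/2\Z)=2g-g=g$ at the end; the paper's route is shorter and recycles the isotropy statement it must establish anyway. Both arguments are complete, and your explicit treatment of the genus count and of the disconnected case is a welcome addition rather than a detour.
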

\begin{proof}
The group $H_{1,0}(S_\Gamma;\Z/2\Z)$ is
clearly contained in its orthogonal for the intersection form on $H_{1}(S_\Gamma;\Z/2\Z)$.
Since this latter  
 is non-degenerate, we deduce that $H_{1,0}(S_\Gamma;\Z/2\Z)$ has dimension at most $g$.
 Let  $U$ be the vector subspace of $H_{1}(S_\Gamma;\Z/2\Z)$ of
 dimension
 $g$ generated by
all classes $\gamma_{\alpha}$ with $\alpha\in H_{1}(\Gamma;\Z/2\Z)$. 
 By
construction, the intersection of $U$ with the orthogonal of
$H_{1,0}(S_\Gamma;\Z/2\Z)$ is trivial, from which 
we deduce that $H_{1,0}(S_\Gamma;\Z/2\Z)$ has dimension at least $g$. Hence
$H_{1,0}(S_\Gamma;\Z/2\Z)$ has dimension  $g$.
Since by definition, the group $H_{0,1}(S_\Gamma;\Z/2\Z)$ is the quotient
of $H_{1}(S_\Gamma;\Z/2\Z)$ by $H_{1,0}(S_\Gamma;\Z/2\Z)$, it also has  dimension
$g$. 
\hfill $\Box$
\end{proof}

We will abusively identify 
 $H_{0,1}(S_\Gamma;\Z/2\Z)$  with the subgroup of $H_1(S_\Gamma;\Z/2\Z)$ generated by 
 all classes $\gamma_{\alpha}$ with $\alpha\in H_{1}(\Gamma;\Z/2\Z)$. We then have  the decomposition
$$H_{1}(S_\Gamma;\Z/2\Z) =   H_{1,0}(S_\Gamma;\Z/2\Z)\oplus
H_{0,1}(S_\Gamma;\Z/2\Z). $$ 
 Hence the group $H_{0,1}(S_\Gamma;\Z/2\Z)$ viewed as a subgroup of
$H_1(S_\Gamma;\Z/2\Z)$ 
is the image of a section of the
quotient map 
$H_{1}(S_\Gamma;\Z/2\Z) \to H_{0,1}(S_\Gamma;\Z/2\Z)$.

\begin{remark}
{\rm Lemma \ref{lem:hodge dec} and the filtration of  $H_{1}(S_\Gamma;\Z/2\Z)$
by $H_{1,0}(S_\Gamma;\Z/2\Z)$ and  $H_{0,1}(S_\Gamma;\Z/2\Z)$ may be
seen as very particular
cases of  \cite[Theorem 1]{IKMZ}. This relation to tropical homology
explains our  choice of notation.}
\end{remark}

\subsection{Real trivalent graphs and real structures above
  them}\label{sec:real graphs}
\begin{definition}
{\rm A \emph{real trivalent graph} is a pair $(\Gamma,\tau)$, where $\Gamma$ is
a trivalent graph and $\tau:\Gamma\to \Gamma$ a continuous
 involution (called real structure), such that the
restriction of $\tau$ on any (open) edge of $\Gamma$ is either the identity or has no fixed points.}
\end{definition}

The condition on the restriction of $\tau$ on edges of $\Gamma$ is to
exclude the situation when this restriction is a symmetry locally given by
$x\mapsto -x$. 

\begin{example}
{\rm Any trivalent graph $\Gamma$ carries a canonical real structure given 
by $\tau=\Id$.}
\end{example}

\begin{example}\label{ex:twist}
{\rm Two real trivalent graphs $(\Gamma,\tau)$ are depicted in
Figure \ref{fig:twist}. The graph on the left is of genus~$2$ whereas the other one is of genus~$3$. In both cases the graph $\Gamma$ is drawn on the plane, and $\tau$ is
the axial symmetry with respect to the line supporting edges $e_1$ (resp. $e_1$ and
$e_2$). Hence $e_1$ (resp. $e_1$ and $e_2$) are exactly the $\tau$-invariant edges
of
$\Gamma$.}
\begin{figure}[h!]
\begin{center}
\begin{tabular}{ccc}
  \includegraphics[width=4.5cm, angle=0]{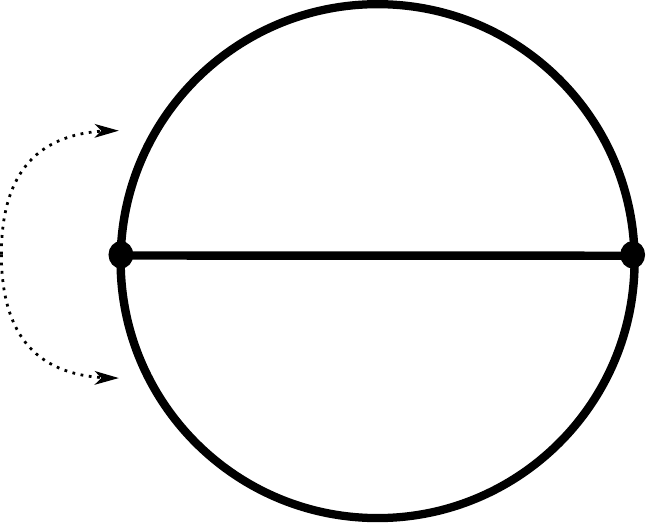}
  \put(-140,50){$\tau$}
  \put(-50,42){$e_1$}
&\hspace{4ex} 
&\includegraphics[width=4.5cm, angle=0]{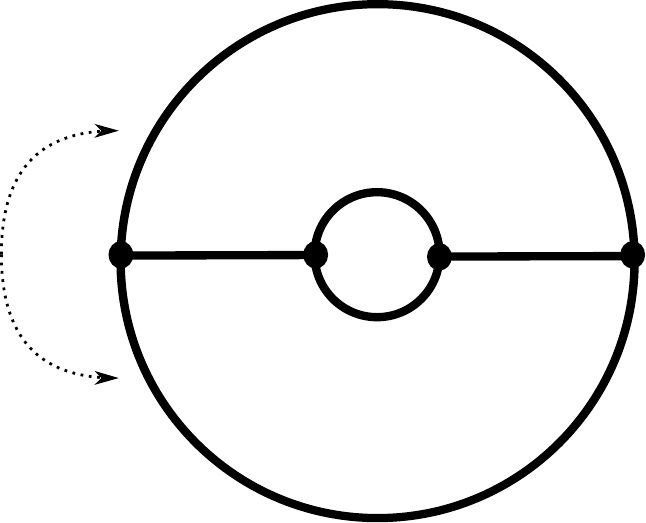}
\put(-140,50){$\tau$}
\put(-87,42){$e_1$}
\put(-30,42){$e_2$}
\\
\\ a)  A real trivalent graph $(\Gamma,\tau)$ of genus $2$
&&  b) A real trivalent graph $(\Gamma,\tau)$ of genus $3$
\end{tabular}
\end{center}
\caption{Real graphs} 
\label{fig:twist}
\end{figure}
\end{example}

Next we define the lifts of $\tau$ to $S_\Gamma$ that we consider in this
text.
For this purpose, we need to fix the following:

\begin{itemize}
\item  the closed disk $\SS_1=\{z\in\C, \ |z|\le 1\}$ equipped with the
  complex conjugation $\sigma_1:\SS_1\to \SS_1$; 

\item a pair of pants $\SS_3$;

\item two orientation-reversing involutions $\sigma_3:\SS_3\to \SS_3$ and 
$\widetilde \sigma:\SS_3\to \SS_3$ such that the fixed locus of $\sigma_3$
  is three disjoint segments, and the fixed locus of $\widetilde \sigma$
  is a segment (see Figure \ref{fig:real pairs of pants});
\begin{figure}[h!]
\begin{center}
\begin{tabular}{ccc}
\includegraphics[width=4.5cm]{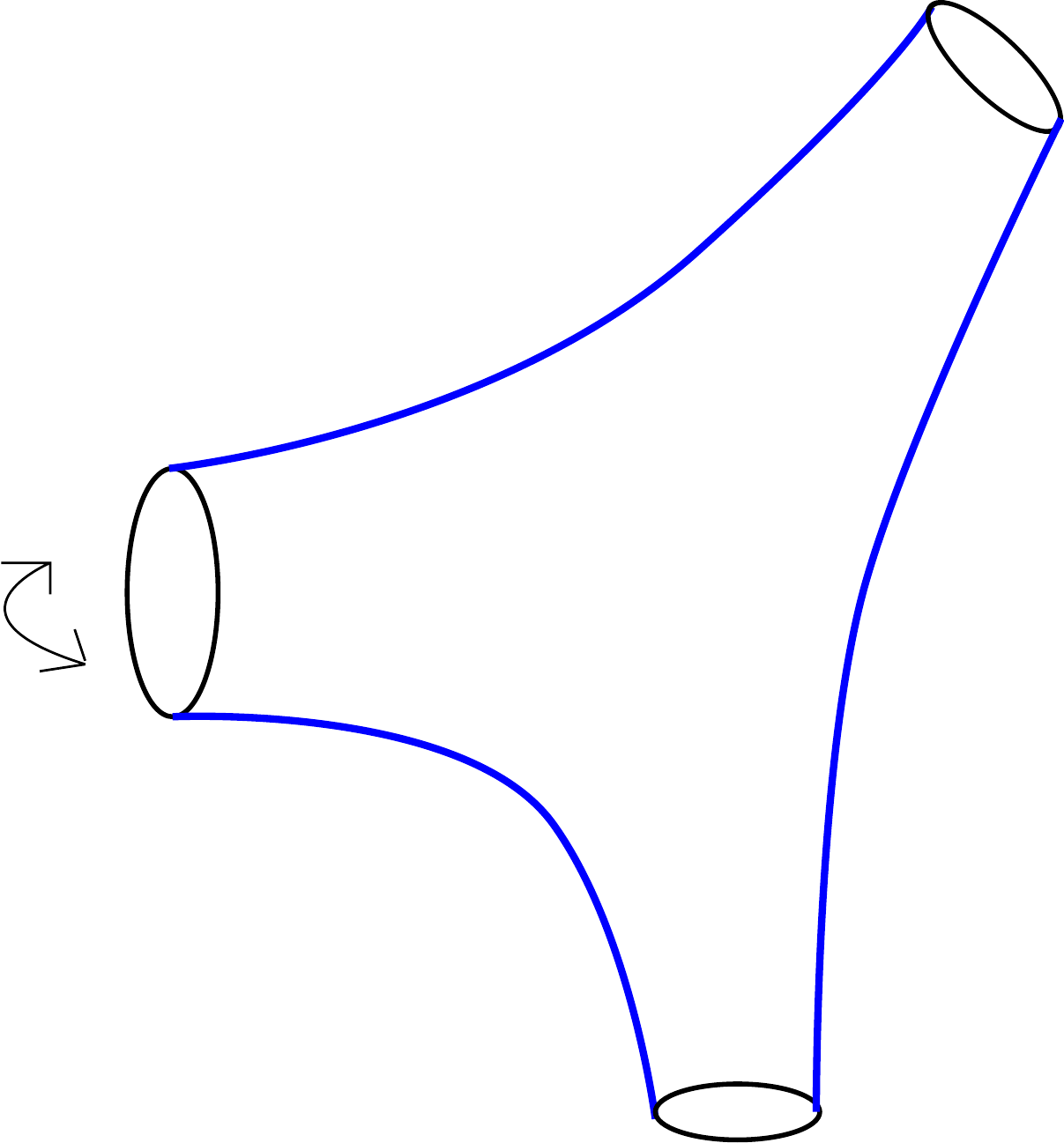}
\put(-145,60){$\sigma_3$}
&\hspace{20ex} 
&\includegraphics[width=4.5cm]{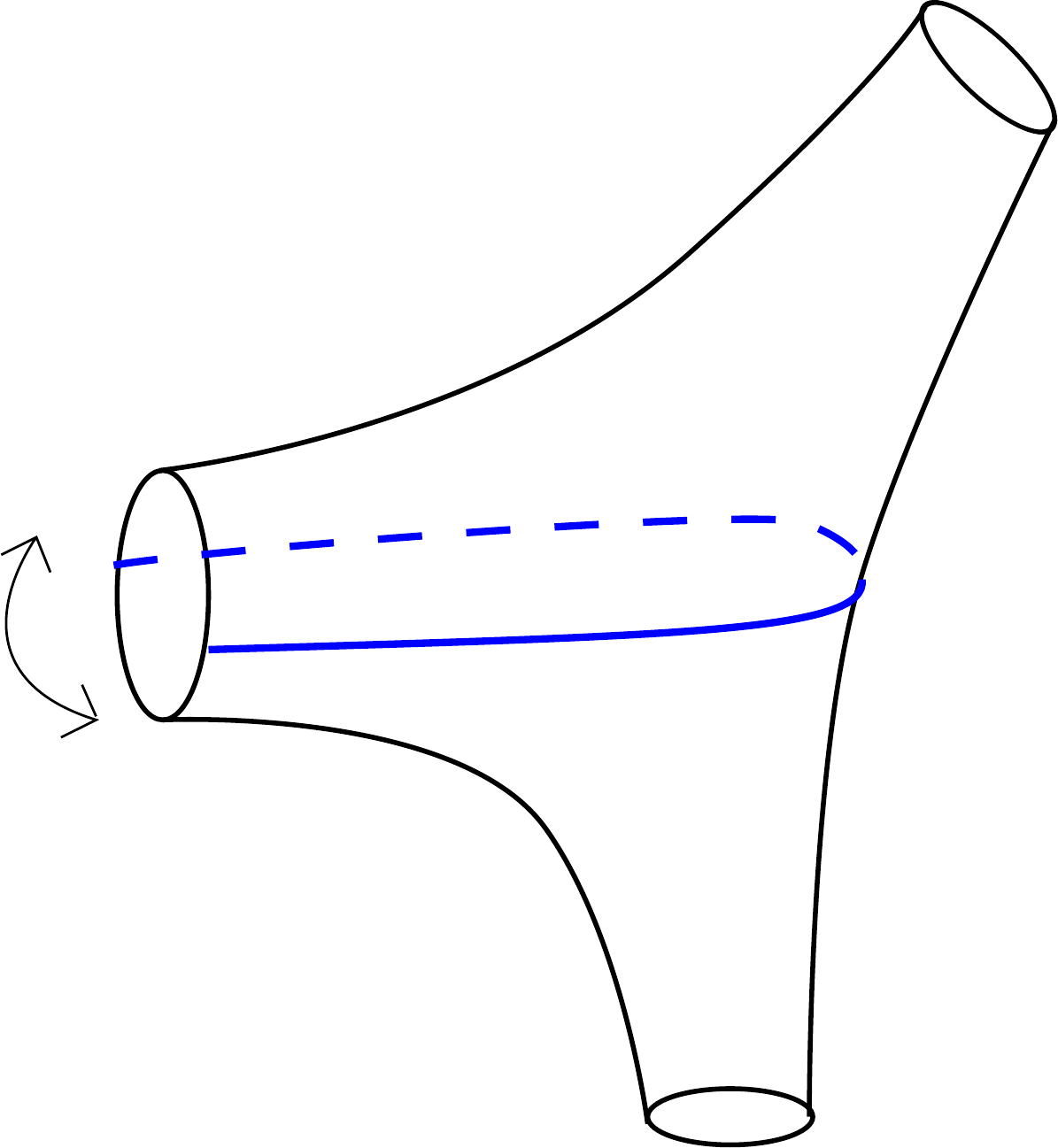}
\put(-145,60){$\widetilde{\sigma}$}
\end{tabular}
\end{center}
\caption{Two real structures on a pair of pants}
\label{fig:real pairs of pants}
\end{figure}
\item for each vertex $v$ of $\Gamma$, an orientation-preserving
  homeomorphism $\psi_v:S_v\to \SS_{\val(v)}$;
  if in addition
  $\tau$ interchanges two adjacent edges of a $\tau$-invariant vertex
   $v$, then
  $\psi_v$ is chosen so that 
$\widetilde{\sigma}(\psi_v(\gamma_{v,e}))=\psi_v(\gamma_{v,\tau(e)})$
for any edge $e$ adjacent to $v$.
\end{itemize}

We say that a continuous orientation-reversing involution $\tau_\Gamma:S_\Gamma\to S_\Gamma$
\emph{lifts} $\tau:\Gamma\to \Gamma$ if the following conditions are satisfied: 
\begin{enumerate}
\item if $\tau$ is locally the identity around $v$, then
   ${\tau_{\Gamma}}\restrict{S_v}= \psi_v^{-1}\circ \sigma_{\val(v)} \circ \psi_v$;   

\item if $v$ is a 3-valent vertex fixed by $\tau$, and if $\tau$
  interchanges two adjacent edges of $v$, then
   ${\tau_{\Gamma}}\restrict{S_v}= \psi_v^{-1}\circ \widetilde \sigma \circ \psi_v$,  

\item if $\tau(v)\ne v$, then
    ${\tau_{\Gamma}}\restrict{S_v}= \psi_{\tau(v)}^{-1}\circ \sigma_{\val(v)} \circ \psi_v$. 
\end{enumerate}
 Note that the three above conditions imply that  if $\tau_\Gamma$ lifts
$\tau$, then 
$\tau_\Gamma(S_e)=S_{\tau(e)}$ for each edge of $\Gamma$.

We denote by $G$ the subgroup of the group of homeomorphisms of $S_\Gamma$
generated by elements $h:S_\Gamma\to S_\Gamma$ such that
\begin{itemize}
\item $h$ restricts to the identity on
$\displaystyle\bigcup_{v\in\Ve^0(\Gamma)} S_v$;
\item if $e$ is a $\tau$-invariant edge of $\Gamma$, then the restriction
  of $h$ on $S_e$ is isotopic to a power of the Dehn twist along
  $\gamma_e$;
\item if $\tau(e)\ne e$, then the restriction
  of $h$ on $S_e\cup S_{\tau(e)}$ is isotopic to a power of the Dehn twist along
  $\gamma_e- \gamma_{\tau(e)}$.

\end{itemize}

Recall that by definition, two real topological surfaces
 $(S_1,\tau_{1})$ and $(S_2,\tau_{2})$ are isomorphic if there exists
  an orientation-preserving homeomorphism $\theta :S_1\to
 S_2$ such that $\theta\circ \tau_{1}\circ \theta^{-1}= \tau_{2}$. 
In particular, if $(\Gamma,\tau)$ is a real trivalent graph,
and if two lifts $\tau_{\Gamma,1}$ and $\tau_{\Gamma,2}$ of $\tau$ are
conjugated by an element of $G$,  then the real topological surfaces
 $(S_\Gamma,\tau_{\Gamma_1})$ and $(S_\Gamma,\tau_{\Gamma_2})$ are isomorphic.

\begin{definition}
\label{def:realpatchworking}
{\rm A \emph{real structure above} a real trivalent graph $(\Gamma,\tau)$ is a lift of 
 $\tau$ considered up to conjugation by an element of $G$.}
\end{definition}

Clearly, the set of lifts of $\tau$ for a real trivalent graph
$(\Gamma,\tau)$ depends on all the choices
made above. 
However, it follows from Proposition~\ref{lem:patch aff} below that the set of real structures above $(\Gamma,\tau)$ only depends on $(\Gamma,\tau)$.

\begin{lemma}\label{lem:2 patch}
Let  $\mathcal S_2$  be the cylinder $\{z\in\C \:\:   | \:\:
1\le |z|\le 2\}$, 
and 
$\conj$ be the complex conjugation on
$\CC$. 
Denote by $T$ 
 the Dehn twist of $\mathcal S_2$ along the
circle of radius $\frac{3}{2}$ 
 given by $$\begin{array}{cccc}
T: &\mathcal S_2&\longrightarrow& \mathcal S_2
\\ & z &\longmapsto & e^{2i\pi(|z|-1)} z
\end{array}.$$
Then, up to isotopy and conjugation by a homeomorphism 
$\theta :\mathcal S_2\to \mathcal S_2$ restricting to the
identity on $\partial \mathcal S_2$, the only real structures  on
$\mathcal S_2$ restricting to $\conj$ on $\partial \mathcal S_2$ are $\conj$ and
$T\circ \conj$. 
\end{lemma}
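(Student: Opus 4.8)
The plan is to classify such involutions through the topology of their fixed-point set, which $\conj$ pins down on $\partial\mathcal S_2$. A real structure $\tau$ with $\tau\restrict{\partial\mathcal S_2}=\conj$ is an orientation-reversing involution, and near any fixed point it is locally a reflection; hence its fixed locus $\mathrm{Fix}(\tau)$ is a properly embedded $1$-manifold meeting $\partial\mathcal S_2$ transversally in exactly the four points $1,-1,2,-2$ fixed by $\conj$. I would first show $\mathrm{Fix}(\tau)$ consists of precisely two arcs and no closed components. To this end, form the quotient $Q=\mathcal S_2/\tau$, which is a compact connected surface with boundary (the local reflection model makes $Q$ a manifold with boundary along $\mathrm{Fix}(\tau)$, the four points $\pm1,\pm2$ contributing corners). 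The two-to-one-away-from-the-fixed-locus covering gives $\chi(\mathcal S_2)=2\chi(Q)-\chi(\mathrm{Fix}(\tau))$; since $\chi(\mathcal S_2)=0$ and the four boundary endpoints force $\mathrm{Fix}(\tau)$ to contain two arcs (so $\chi(\mathrm{Fix}(\tau))=2$, closed components contributing $0$), we get $\chi(Q)=1$, so $Q$ is a disk. As $\mathrm{Fix}(\tau)$ and $\partial\mathcal S_2$ both map into the single circle $\partial Q$, any closed component of $\mathrm{Fix}(\tau)$ would produce a disjoint subcircle of $\partial Q$, which is impossible; hence $\mathrm{Fix}(\tau)$ is exactly two arcs.

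Next I would read off the two possible arc configurations from $\partial Q$. The circle $\partial Q$ is a $4$-cycle whose vertices are the images $\bar 1,\bar{-1},\bar2,\bar{-2}$ and whose edges are the images of the two boundary circles ($\bar I$ joining $\bar1,\bar{-1}$ and $\bar O$ joining $\bar2,\bar{-2}$, since $\conj$ folds each circle at its two fixed points) together with the images $\bar A_1,\bar A_2$ of the two fixed arcs. Connectedness of $\partial Q$ forbids $\bar A_1$ from again joining $\bar1$ to $\bar{-1}$, so the fixed arcs must join inner endpoints to outer endpoints, leaving exactly two pairings: $\{1,2\},\{-1,-2\}$ and $\{1,-2\},\{-1,2\}$. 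This pairing is an invariant of the equivalence relation, because conjugation by a homeomorphism $\theta$ fixing $\partial\mathcal S_2$ pointwise (and isotopy) preserves the four marked points and carries $\mathrm{Fix}(\tau)$ to the fixed locus of the conjugate. A direct computation then shows $\conj$ realizes the first pairing (its fixed arcs are the real segments $[1,2]$ and $[-2,-1]$) while $T\circ\conj$ realizes the second (its fixed arcs $re^{i\pi(r-1)}$ and $re^{i\pi r}$ join $1$ to $-2$ and $-1$ to $2$); so these two classes are genuinely distinct.

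It then remains to show that each pairing determines a single equivalence class. Given two such involutions of the same pairing type, both fixed loci are systems of two disjoint essential arcs with the same endpoints and the same pairing, so by the change-of-coordinates principle on the annulus there is a homeomorphism $\theta$ fixing $\partial\mathcal S_2$ pointwise and carrying one system onto the other; after replacing one involution by its $\theta$-conjugate I may assume the two involutions share the same fixed set $F$. Now $F$ cuts $\mathcal S_2$ into two half-disks $H_+,H_-$, and each involution, reflecting across $F$, swaps them and is determined by its restriction $H_+\to H_-$. These two restrictions agree on $\partial H_+$ (they are the identity on $F$ and equal $\conj$ on $\partial\mathcal S_2\cap H_+$), hence are isotopic rel boundary by Alexander's lemma; extending the isotopy equivariantly over $H_-$ produces an isotopy between the two involutions through real structures restricting to $\conj$ on $\partial\mathcal S_2$. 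Combining the two paragraphs, there are exactly two classes, represented by $\conj$ and $T\circ\conj$.

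The step I expect to be the main obstacle is the very first one: guaranteeing that the \emph{topological} involution $\tau$ has the clean local structure used throughout — namely that $\mathrm{Fix}(\tau)$ is a genuine $1$-manifold and $\tau$ is locally a reflection, so that the quotient $Q$ is a surface with boundary and the Euler-characteristic bookkeeping is valid. I would handle this by invoking the standard local linearizability (smoothability) of orientation-reversing involutions of surfaces; once this is in place the remaining combinatorics and the Alexander-lemma argument are routine.
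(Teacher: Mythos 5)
Your argument is correct in substance, but it takes a genuinely different route from the paper's. The paper works algebraically through the mapping class group of the annulus: for any real structure $\tau_1$ restricting to $\conj$ on $\partial\mathcal S_2$, the composition $\tau_1\circ\conj$ is an orientation-preserving homeomorphism fixing $\partial \mathcal S_2$ pointwise, hence isotopic rel boundary to $T^k$ for some $k\in\Z$; the identity $T\circ\conj\circ T^{-1}=T^2\circ\conj$ shows that conjugation shifts $k$ by $2$, so the parity of $k$ is the only invariant, and the distinctness of the two resulting classes is checked by comparing the fixed loci of $\conj$ and $T\circ\conj$ --- which is exactly your endpoint-pairing observation. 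You instead classify directly by the topology of the fixed-point set (Euler characteristic of the quotient, endpoint pairing as invariant, change of coordinates plus Alexander's lemma for uniqueness within each class). The paper's route is shorter, needs only the computation of the mapping class group of the annulus (a statement about homeomorphisms, with no smoothing or linearization input), and is the argument that transfers verbatim to Lemma \ref{lem:2 patch2}; yours is more geometric and makes the invariant distinguishing the two classes very concrete, but it leans on the local linearizability of topological involutions of surfaces, a classical but nontrivial input that the paper sidesteps. Two points to tighten: (i) the claim that each involution swaps the two half-disks $H_\pm$ deserves a word (it is immediate from the local reflection model at an interior point of $F$, since a reflection exchanges the two local sides of its fixed arc); (ii) your pairing is manifestly a conjugation invariant, but its invariance under the ``isotopy'' half of the equivalence relation is not automatic --- post-composing with a homeomorphism isotopic to the identity rel $\partial\mathcal S_2$ gives no direct control on the fixed set --- and really follows from your classification combined with a parity computation in the mapping class group; the paper's own proof is equally terse at this exact step, so this is a shared, easily repaired gloss rather than a defect specific to your approach.
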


The
real part of the two real structures $\conj$ and
$T\circ \conj$ differ
by a ``half'' Dehn twist, see Figure~\ref{fig:real annulus}. 
\begin{figure}[h!]
\begin{center}
\begin{tabular}{ccc}
\includegraphics[width=3.5cm, angle=0]{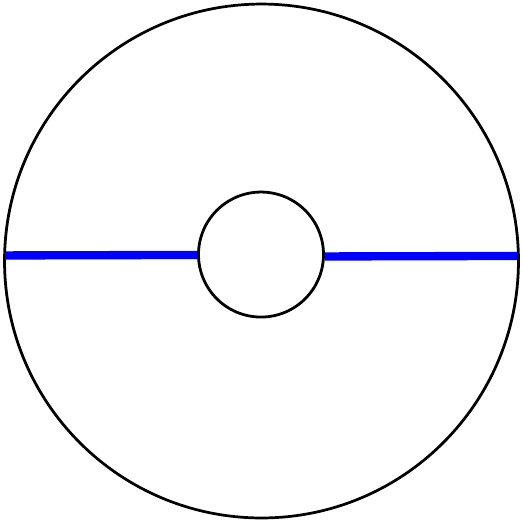}
&\hspace{8ex} 
&\includegraphics[width=3.5cm, angle=0]{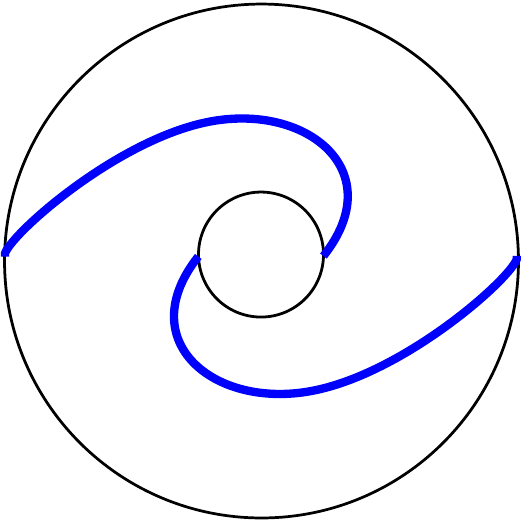}
\\
\\   $z\mapsto \overline z$
&&  $z\mapsto e^{2i\pi(|z|-1)}\overline z$
\end{tabular}
\end{center}
\caption{The real part of two real structures on the annulus $\{z\in\C \:\:   | \:\:
  1\le |z|\le 2\}$} 
\label{fig:real annulus}
\end{figure}
\begin{proof}
Let $\tau_1:\mathcal S_2\to \mathcal S_2$ be an orientation-reversing
continuous involution that  restricts to $\conj$ on $\partial \mathcal
S_2$. 
The map $\tau_1\circ \conj$ is an orientation-preserving
continuous involution that  restricts to the identity on $\partial \mathcal
S_2$. Hence it is, up to a composition by an isotopy restricting to the identity on $\partial \mathcal
S_2$, equal to  $T^k$ for some integer $k$.
We compute $$ T\circ \conj\circ T^{-1}(z)=e^{4i\pi(|z|-1)} \overline
z=T^2\circ \conj. $$
Hence there exists  a homeomorphism $\theta$ as in the
lemma such that 
$\tau_1=\theta\circ \conj\circ \theta^{-1}$ 
if $k$ is even, and such that
$\tau_1=\theta\circ T\circ \conj\circ \theta^{-1}$ 
 if $k$ is odd. 

If there would exist  a homeomorphism $\theta$ as in the
lemma such that $\conj=\theta\circ T\circ \conj\circ \theta^{-1}$, it
 would map the real part of $T\circ \conj$ to the real part of $\conj$.
Since these two real parts differ by a ``half'' Dehn twist,
this is impossible.
\hfill $\Box$
\end{proof}

\begin{lemma}\label{lem:2 patch2}
 Let $\mathcal S'_2$ be the cylinder $\{z\in\C \:\:   | \:\:
\frac{1}{4}\le |z|\le \frac{1}{2}\}$, let  $\mathcal
S''_2$ be the cylinder $\{z\in\C \:\:   | \:\: 
2\le |z|\le 4\}$. 
Denote respectively by $T_1$ and $T_2$  
the Dehn twists of $\mathcal S'_2$ and $\mathcal S''_2$ 
 given by $$\begin{array}{cccc}
T_1: &\mathcal S'_2&\longrightarrow& \mathcal S'_2
 \\ & z &\longmapsto & e^{2i\pi(1- \frac{1}{2|z|})}z
\end{array}
\quad \mbox{and}\quad \begin{array}{cccc}
T_2: &\mathcal S''_2&\longrightarrow& \mathcal S''_2
\\ & z &\longmapsto & e^{2i\pi(\frac{|z|}{2}-1)}z
\end{array}.$$
Then, up to isotopy and conjugation by a homeomorphism 
$\theta :\mathcal S'_2\cup\mathcal S''_2 \to 
\mathcal S'_2\cup\mathcal S''_2 $ restricting to the identity
on $\partial(\mathcal S'_2\cup\mathcal S''_2)$, the only real structures  on
$\mathcal S'_2\cup\mathcal S''_2 $
 restricting to $\frac{1}{\conj}$ on $\partial(\mathcal S'_2\cup\mathcal S''_2)$ are $\frac{1}{\conj}$ and
$T_1\circ T_2\circ \frac{1}{\conj}$. 
\end{lemma}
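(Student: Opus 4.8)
The plan is to follow the strategy of the proof of Lemma~\ref{lem:2 patch}, reducing the statement to a computation in the mapping class group of $\mathcal S'_2\cup\mathcal S''_2$ relative to its boundary, the one genuinely new feature being that $\frac{1}{\conj}$ now \emph{exchanges} the two cylinders instead of preserving a single one.

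First I would record that $\frac{1}{\conj}$ sends the circle $\{|z|=r\}$ to $\{|z|=1/r\}$, hence maps $\mathcal S'_2$ onto $\mathcal S''_2$ and conversely, and in particular has no fixed point on $\mathcal S'_2\cup\mathcal S''_2$. Therefore any orientation-reversing involution $\tau_1$ restricting to $\frac{1}{\conj}$ on the boundary exchanges the two cylinders as well, since it sends each boundary circle of $\mathcal S'_2$ to a boundary circle of $\mathcal S''_2$ and these determine the connected components. The composition $\tau_1\circ\frac{1}{\conj}$ is then an orientation-preserving homeomorphism restricting to the identity on $\partial(\mathcal S'_2\cup\mathcal S''_2)$ and preserving each of $\mathcal S'_2$ and $\mathcal S''_2$. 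As the mapping class group of a cylinder rel boundary is infinite cyclic, generated by its Dehn twist, up to an isotopy fixing $\partial(\mathcal S'_2\cup\mathcal S''_2)$ we get $\tau_1\circ\frac{1}{\conj}=T_1^aT_2^b$, that is $\tau_1\simeq T_1^aT_2^b\circ\frac{1}{\conj}$ for some integers $a,b$.

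The key computation is then the pair of relations $\frac{1}{\conj}\circ T_1\circ\frac{1}{\conj}=T_2^{-1}$ and $\frac{1}{\conj}\circ T_2\circ\frac{1}{\conj}=T_1^{-1}$, which I would check directly; this is precisely what the specific exponents $1-\frac{1}{2|z|}$ and $\frac{|z|}{2}-1$ together with the chosen radii are tailored to produce, and the minus signs reflect that $\frac{1}{\conj}$ is orientation-reversing. Squaring yields $(T_1^aT_2^b\circ\frac{1}{\conj})^2=T_1^{a-b}T_2^{b-a}$, so the involutivity $\tau_1^2=\Id$ forces $a=b$ as mapping classes, i.e. $\tau_1\simeq(T_1T_2)^a\circ\frac{1}{\conj}$. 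The same relations give $(T_1T_2)\circ\frac{1}{\conj}\circ(T_1T_2)^{-1}=(T_1T_2)^2\circ\frac{1}{\conj}$, so conjugating by the Dehn twist $T_1T_2$, which is the incarnation in these coordinates of the Dehn twist along $\gamma_e-\gamma_{\tau(e)}$ generating the relevant part of $G$, changes $a$ into $a\pm2$. Hence up to isotopy and admissible conjugation every such real structure is equivalent to $\frac{1}{\conj}$ if $a$ is even and to $T_1\circ T_2\circ\frac{1}{\conj}$ if $a$ is odd.

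The step I expect to be the main obstacle is proving that these two are genuinely inequivalent. In Lemma~\ref{lem:2 patch} this was done geometrically: the real parts of $\conj$ and $T\circ\conj$ are non-empty $1$-manifolds differing by a half twist, so no boundary-fixing homeomorphism can match them. Here both $\frac{1}{\conj}$ and $T_1\circ T_2\circ\frac{1}{\conj}$ are \emph{fixed-point free}, so this argument is unavailable and the distinction must be read off from the integer $a$ itself: it is a genuine rel-boundary isotopy invariant, and the admissible conjugations, being powers of $T_1T_2$, only shift it by multiples of $2$, so its parity is well defined. The delicate point is exactly that the gauge group to be used is the rank-one group $\langle T_1T_2\rangle$ and not the full rel-boundary mapping class group $\langle T_1,T_2\rangle$: indeed one computes $T_1\circ\frac{1}{\conj}\circ T_1^{-1}=T_1T_2\circ\frac{1}{\conj}$, so conjugation by $T_1$ alone would already identify the two classes. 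It is the orientation-reversing exchange encoded in $\frac{1}{\conj}\,T_i\,\frac{1}{\conj}=T_j^{-1}$ that singles out $T_1T_2$ as the correct generator and produces exactly the two announced real structures.
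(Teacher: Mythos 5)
Your proof is correct and follows essentially the same route as the paper's: reduce $\tau_1$ to $T_1^kT_2^l\circ\frac{1}{\conj}$ via the rel-boundary mapping class group of the two annuli, use involutivity of $\tau_1$ to force $k=l$, and use the conjugation identity $(T_1T_2)\circ\frac{1}{\conj}\circ(T_1T_2)^{-1}=T_1^2T_2^2\circ\frac{1}{\conj}$ to reduce to the parity of $k$; your relations $\frac{1}{\conj}\circ T_i\circ\frac{1}{\conj}=T_j^{-1}$ are exactly the content of the paper's explicit computation of $\bigl(T_1^kT_2^l\circ\frac{1}{\conj}\bigr)^2$. The one place where you go beyond the paper is the final inequivalence of the two classes: the paper says this step is ``proved as Lemma~\ref{lem:2 patch}'', but the real-part argument used there is unavailable here since both structures are fixed-point free, and your observation that $T_1\circ\frac{1}{\conj}\circ T_1^{-1}=T_1T_2\circ\frac{1}{\conj}$ --- so that the relevant gauge group is $\langle T_1T_2\rangle$, as in the group $G$ of Section~\ref{sec:real graphs}, rather than all boundary-fixing homeomorphisms --- is a correct and useful clarification of a point the paper leaves implicit.
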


\begin{proof}
The proof is similar to the proof of Lemma \ref{lem:2 patch}.
Let $\tau_1:\mathcal S'_2\cup\mathcal S''_2 $ be an orientation-reversing
continuous involution that  restricts to $\frac{1}{\conj}$ on 
$\partial(\mathcal S'_2\cup\mathcal S''_2)$.
The map $\tau_1\circ \frac{1}{\conj}$ is an orientation-preserving
continuous involution that  restricts to the identity on
$\partial(\mathcal S'_2\cup\mathcal S''_2)$. 
Hence it is, up to a composition by an isotopy restricting to the
identity on 
$\partial(\mathcal S'_2\cup\mathcal S''_2)$, equal to  $T_1^k\circ
T_2^l$ for some integers $k$ and $l$. 
 We extend the Dehn twists $T_1$ and $T_2$ 
 to $\CC$ by the identity 
respectively outside $\mathcal S'_2$ and $\mathcal S''_2$.
We compute
$$
{\left(T_1^k\circ T_2^{l}\circ  \frac{1}{\conj}\right)}^2(z)=ze^{2i\pi(l-k)(\frac{1}{2|z|}-1)}
$$
if $z\in \mathcal S'_2$ and
$$
{\left(T_1^k\circ T_2^{l}\circ  \frac{1}{\conj}\right)}^2(z)=ze^{2i\pi(l-k)(\frac{|z|}{2}-1)}
$$
if $z\in \mathcal S''_2$. Hence the map $
(T_1^k\circ T_2^{l})\circ  \frac{1}{\conj}$ is an involution on
 $\mathcal S'_2\cup\mathcal S''_2$ if and only if $k=l$.
 We compute
$$
(T_1\circ T_2)\circ \frac{1}{\conj} \circ (T_1^{-1}\circ T_2^{-1})=(T_1^2\circ T_2^{2})\circ \frac{1}{\conj},
$$
and the lemma is proved as Lemma \ref{lem:2 patch}.
\hfill $\Box$
\end{proof}

The following proposition is the main observation that allows one to
compare two different real structures 
 above 
a given real trivalent graph
$(\Gamma,\tau)$, or in other words, to make sense of the difference 
$\tau_{\Gamma,1}-\tau_{\Gamma,2}$ of two real structures in  $\Pi_{(\Gamma,\tau)}$.
Let $\Gamma/\tau$ be the quotient of the graph $\Gamma$ by $\tau$,
i.e.
edges and vertices exchanged by $\tau$ are identified.
\begin{proposition}\label{lem:patch aff}
Let $(\Gamma,\tau)$ be a real trivalent graph. 
Let $e_1,\ldots, e_k$ be the 
 edges in $\Ed^0(\Gamma/\tau)$, 
 Then the set  $\Pi_{(\Gamma,\tau)}$ of real structures above $(\Gamma,\tau)$ is naturally
an affine space with direction 
$$\overrightarrow{\Pi_{(\Gamma,\tau)}}=\Z/2\Z e_1\oplus \Z/2\Z e_2\oplus
\ldots \oplus \Z/2\Z e_{k}.$$
Furthermore the set of fixed points of such a real structure 
$\tau_\Gamma$ is well defined up to isotopy and  Dehn twists on cylinders corresponding
to $\tau$-invariant edges in $\Ed^0(\Gamma)$. 
\end{proposition}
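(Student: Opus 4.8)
The plan is to exploit the fact that conditions (1)--(3) in the definition of a lift completely prescribe $\tau_\Gamma$ on each surface $S_v$, so that the only freedom in a lift lies in its restriction to the cylinders $S_e$. I would first fix one reference lift $\tau_\Gamma^0$ of $\tau$ (for instance the one inducing the untwisted real structure on every cylinder, which exists by the construction of $S_\Gamma$ and conditions~(1)--(3)), and then compare an arbitrary lift $\tau_\Gamma$ to $\tau_\Gamma^0$ cylinder by cylinder. Since any two lifts agree on $\bigcup_v S_v$, and since the restriction of a lift to the boundary of each cylinder is imposed by its restriction to the adjacent $S_v$, one may analyze each $S_e$ (when $e$ is $\tau$-invariant) and each pair $S_e\cup S_{\tau(e)}$ (when $\tau(e)\neq e$) independently. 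Lemma~\ref{lem:2 patch} and Lemma~\ref{lem:2 patch2} then assert that, up to isotopy and conjugation by homeomorphisms fixing the relevant boundary, there are exactly two such restrictions, differing by a half Dehn twist along $\gamma_e$, respectively along $\gamma_e-\gamma_{\tau(e)}$.

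Next I would pass to the quotient by $G$ and decide which of these binary choices survive. Conjugating $\conj$ by the full Dehn twist $T\in G$ yields $T^2\circ\conj$ (as in the proof of Lemma~\ref{lem:2 patch}), and similarly for a pair; hence $G$ alters the twisting only by full twists and therefore preserves the two classes attached to each cylinder, so that no genuine choice is merged by an element of $G$ acting on the cylinder alone. The decisive point is the behaviour at the ends of a cylinder. If $e$ is adjacent to a $1$-valent vertex $v$, then $S_v\cup S_e$ is a disk $D$, both candidate real structures restrict to $\conj$ on $\partial D$, and their real parts are properly embedded arcs of $D$ with the same endpoints, hence isotopic rel $\partial D$; the resulting conjugating homeomorphism is supported in $D$, is the identity on every $S_v$ with $v\in\Ve^0(\Gamma)$, and thus lies in $G$, so the two choices coincide in $\Pi_{(\Gamma,\tau)}$. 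By contrast, if both ends of the cylinder abut pairs of pants---equivalently, if the corresponding edge of $\Gamma/\tau$ lies in $\Ed^0(\Gamma/\tau)$---then $G$ is the identity on those pieces, no disk is available to absorb the half-twist, and the binary choice is genuine.

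It then remains to organize this into the asserted structure. I would let $\overrightarrow{\Pi_{(\Gamma,\tau)}}$ act on $\Pi_{(\Gamma,\tau)}$ by inserting, for each $e_i\in\Ed^0(\Gamma/\tau)$, the half Dehn twist on the associated cylinder or pair of cylinders. Lemmas~\ref{lem:2 patch} and~\ref{lem:2 patch2} guarantee that the result is again a lift, while the analysis above shows the action is transitive (any lift differs from $\tau_\Gamma^0$ by such half-twists) and free (distinct parity vectors give $G$-inequivalent lifts, since the generators of $G$ are supported on single cylinders or single pairs and act by even twists). This exhibits $\Pi_{(\Gamma,\tau)}$ as a torsor over $\bigoplus_i\Z/2\Z\, e_i$, that is, as the claimed affine space. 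Finally, since the fixed-point set of any lift is contained in $\bigcup_v S_v$ together with the cylinders of $\tau$-invariant edges, and since on the surfaces $S_v$ it is prescribed by (1)--(3) while the only remaining $G$-ambiguity is the Dehn twist along $\gamma_e$ for $\tau$-invariant edges $e\in\Ed^0(\Gamma)$ (the half-twist being absorbed for leaf-adjacent edges), the real part is well defined up to isotopy and these Dehn twists.

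The step I expect to demand the most care is the disk-absorption argument of the second paragraph: verifying that the homeomorphism untwisting a leaf-adjacent cylinder can indeed be chosen inside $G$---its restriction to $S_e$ being isotopic to a power of the Dehn twist along $\gamma_e$, the surplus half-twist being slid off across the free boundary into the disk---and, dually, that no such absorption is possible when both ends abut rigid pairs of pants. The other bookkeeping point on which the identification of the basis with $\Ed^0(\Gamma/\tau)$ hinges is the treatment of folding vertices: the $\widetilde\sigma$ case, in which a $3$-valent $\tau$-invariant vertex becomes $2$-valent in $\Gamma/\tau$ yet still belongs to $\Ve^0(\Gamma/\tau)$.
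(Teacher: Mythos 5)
Your proof is correct and follows the same route as the paper's: reduce to the cylinders and invoke Lemmas~\ref{lem:2 patch} and~\ref{lem:2 patch2} to obtain a binary choice for each $\tau$-invariant edge and each swapped pair of edges in $\Ed^0(\Gamma)$. The paper's proof consists of exactly these two applications and leaves implicit everything else you spell out --- the absorption of the extra twist on leaf-adjacent cylinders into the disks (where the group $G$ is unconstrained), and the freeness and transitivity of the resulting action --- so your version is, if anything, more complete.
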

\begin{proof}
Let $e$ be a $\tau$-invariant edge in $\Ed^0(\Gamma)$. It follows from
Lemma \ref{lem:2 patch}  that there exist exactly two possibilities 
for the restriction of a real structure on $S_e$, that are given, up
to isotopy and conjugating by $T$, by $\conj$ and $T\circ \conj$ on the cylinder
$S_e=\{z\in\C \:\:   | \:\:  1\le |z|\le 2\}$.

Similarly, for each pair of cylinders $\{S_e,S_{\tau(e)}\}$ 
with $e\in\Ed^0(\Gamma)$ such that $\tau(e)\neq e$, 
it follows from
Lemma \ref{lem:2 patch2} that there are exactly two possibilities for the restriction of a real
structure on $S_e\cup S_{\tau(e)}$, which differ by a composition
with the Dehn twist along $\gamma_e-\gamma_{\tau(e)}$. 
\hfill $\Box$
\end{proof}

\begin{example}\label{ex:genus2}
{\rm On Figure~\ref{fig:realG2} we show the fixed point set of two different real structures lifting the involution $\tau$ on the real graph of genus 2 depicted on Figure~\ref{fig:twist}a.}
\begin{figure}[h!]
\begin{center}
\begin{tabular}{ccc}
\includegraphics[width=5cm, angle=0]{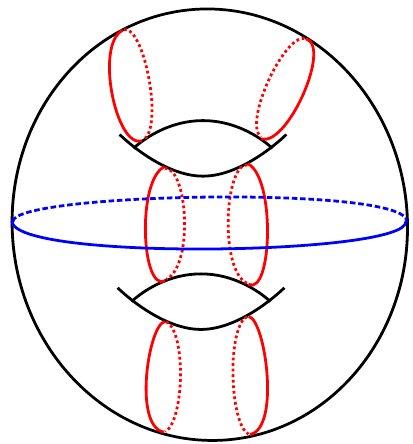}
&\hspace{8ex}
&\includegraphics[width=5cm, angle=0]{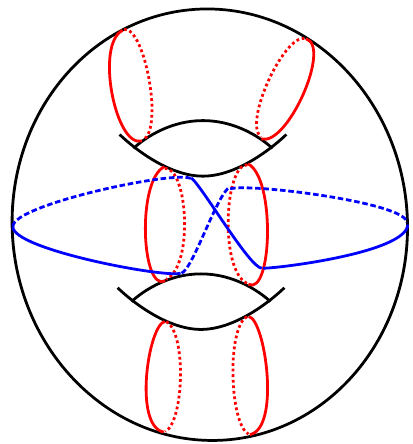}

\end{tabular}
\end{center}
\caption{Two real structures above the same graph $(\Gamma,\tau)$ of genus $2$ of Example~\ref{ex:twist}} 
\label{fig:realG2}
\end{figure}
  
\end{example}

\begin{remark}\label{rem:ribbon}
{\rm In the case when $\tau=\Id$, the data of a real structure
$\tau_\Gamma:S_\Gamma\to S_\Gamma$ above $(\Gamma,\Id)$ is
equivalent to the data of the ribbon structure on
$\Gamma$ given by $S_\Gamma/\tau_\Gamma$. Furthermore, two 
real structures differ along an edge $e$ of $\Gamma$ if and only if the
two ribbon structures differ by a half twist along $e$.}
\end{remark}

\section{Action on $H_1(S_\Gamma;\Z/2\Z)$ induced by a real structure}\label{sec:action}
We identify in this section
a vector subspace $W_{(\Gamma,\tau)}$ of
$\overrightarrow{\Pi_{(\Gamma,\tau)}}$ that characterises all real structures
$\tau_{\Gamma}:S_\Gamma\to S_\Gamma$ inducing the same map
$\tau_{\Gamma *}:H_1(S_\Gamma;\Z/2\Z)\to H_1(S_\Gamma;\Z/2\Z)$.
Next lemma ensures that the action on $H_1(S_\Gamma;\Z/2\Z)$ induced by a
real structure above a real trivalent graph is well defined. 
\begin{lemma}
Let  $(\Gamma,\tau)$ be a real trivalent graph and  $\tau_{\Gamma}:S_\Gamma\to
S_\Gamma$ be a lift of $\tau$. Then
the induced involution $\tau_{\Gamma*}:H_1(S_\Gamma;\Z/2\Z)\to H_1(S_\Gamma;\Z/2\Z)$
only 
 depends on the real structure class of $\tau_\Gamma$.
\end{lemma}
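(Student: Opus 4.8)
The plan is to show that the homology action is unchanged under the conjugations that define a real structure class. By Definition~\ref{def:realpatchworking}, two lifts $\tau_{\Gamma,1}$ and $\tau_{\Gamma,2}$ of $\tau$ represent the same real structure exactly when $\tau_{\Gamma,2}=h\circ\tau_{\Gamma,1}\circ h^{-1}$ for some $h\in G$. Functoriality of $H_1(\,\cdot\,;\Z/2\Z)$ then gives $\tau_{\Gamma,2*}=h_*\circ\tau_{\Gamma,1*}\circ h_*^{-1}$, so it suffices to prove that $h_*$ commutes with $\tau_{\Gamma,1*}$ for every $h\in G$. Since $h\mapsto h_*$ is a group homomorphism and the centraliser of $\tau_{\Gamma,1*}$ in $\mathrm{Aut}(H_1(S_\Gamma;\Z/2\Z))$ is a subgroup, I would only check this relation on the generators of $G$.

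Recall that a generator of $G$ is, up to isotopy, a power of the Dehn twist $T_{\gamma_e}$ along the core $\gamma_e$ of the cylinder $S_e$ for a $\tau$-invariant edge $e$, or a power of $T_{\gamma_e}\circ T_{\gamma_{\tau(e)}}^{-1}$ when $\tau(e)\neq e$. The key step is the standard fact that conjugating a Dehn twist by a homeomorphism $\phi$ yields the Dehn twist along the image curve, inverted when $\phi$ reverses orientation. As $\tau_\Gamma$ is orientation-reversing and satisfies $\tau_\Gamma(S_e)=S_{\tau(e)}$, it sends the core of $S_e$ to the core of $S_{\tau(e)}$, whence $\tau_\Gamma\,T_{\gamma_e}\,\tau_\Gamma^{-1}=T_{\gamma_{\tau(e)}}^{-1}$ up to isotopy. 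For a $\tau$-invariant edge this reads $\tau_\Gamma\,T_{\gamma_e}\,\tau_\Gamma^{-1}=T_{\gamma_e}^{-1}$, and for $\tau(e)\neq e$, using that the disjoint twists $T_{\gamma_e}$ and $T_{\gamma_{\tau(e)}}$ commute, it gives $\tau_\Gamma\,(T_{\gamma_e}T_{\gamma_{\tau(e)}}^{-1})\,\tau_\Gamma^{-1}=T_{\gamma_e}T_{\gamma_{\tau(e)}}^{-1}$. Passing to $\Z/2\Z$-homology, a Dehn twist and its inverse induce the same transvection (over $\Z/2\Z$ one has $T_{\gamma_e*}^2=\Id$, since self-intersections vanish on an oriented surface), so in both cases $\tau_{\Gamma*}\,h_*\,\tau_{\Gamma*}^{-1}=h_*$, which is the required commutation.

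Equivalently, one may verify this by a direct Picard--Lefschetz computation: write $T_{c*}(x)=x+\langle x,c\rangle\,c$ over $\Z/2\Z$, and use that $\tau_{\Gamma*}$ permutes the classes $\gamma_e$ according to $\tau$ together with $\langle\tau_{\Gamma*}a,\tau_{\Gamma*}b\rangle=\langle a,b\rangle$ (the sign coming from the orientation-reversal disappears modulo $2$). I expect the only delicate point to be this bookkeeping: one must track how the orientation-reversal of $\tau_\Gamma$ combines with the swap $\gamma_e\leftrightarrow\gamma_{\tau(e)}$, so that the two transvections arising in the case $\tau(e)\neq e$ are exchanged correctly and the sign of the conjugated twist becomes irrelevant after reducing mod $2$. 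Everything else is routine functoriality and the elementary description of the generators of $G$.
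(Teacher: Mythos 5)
Your proof is correct and follows essentially the same route as the paper: the paper observes (via the computations $T\circ\conj\circ T^{-1}=T^{2}\circ\conj$ in its two cylinder lemmas) that changing the representative within a real structure class amounts to composing with an isotopy or an even power of a Dehn twist, which acts trivially on $H_1(S_\Gamma;\Z/2\Z)$. Your reformulation --- that conjugation by the orientation-reversing $\tau_\Gamma$ inverts the relevant Dehn twists, so $h_*$ commutes with $\tau_{\Gamma*}$ after reduction mod $2$ --- is the same computation packaged through the standard conjugation formula for Dehn twists.
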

\begin{proof}
It follows from the proof of Lemmas \ref{lem:2 patch} and \ref{lem:2 patch2} 
that any other representative of the real structure class of $\tau_\Gamma$ is
obtained from this latter by a finite sequence of compositions with
either an isotopy or by an even power of a Dehn twist. Since both types of
maps induce a trivial action on $H_1(S_\Gamma;\Z/2\Z)$, the result follows.
\hfill $\Box$
\end{proof}

Given  $(\Gamma,\tau)$    a real trivalent graph, we denote by
$\pi:\Gamma\to \Gamma/\tau$ the quotient map. 
 Recall that by Proposition \ref{lem:patch aff}, an edge in $\Ed^0(\Gamma)$
 defines a vector in $\overrightarrow{\Pi_{(\Gamma,\tau)}}$.
There is a natural bilinear map
$$\begin{array}{cccc}
\mu :& H_1(\Gamma;\Z/2\Z)\times H_1(\Gamma;\Z/2\Z) & \longrightarrow & \overrightarrow{\Pi_{(\Gamma,\tau)}}
\end{array} $$ 
 that associates to two 1-cycles $\alpha$ and $\beta$
 the sum of the vectors in $\overrightarrow{\Pi_{(\Gamma,\tau)}}$ defined by the edges contained 
 in the support of the 1-chain $\pi(\alpha\cap\beta)$.
 We denote by $\langle . , . \rangle$ the standard bilinear form
 on $\overrightarrow{\Pi_{(\Gamma,\tau)}}$ defined by
 $$\left \langle  \sum_{e \in\Ed^0(\Gamma/\tau)} u_e e , \sum_{e \in\Ed^0(\Gamma/\tau)} v_e e \right\rangle =\sum_{e \in\Ed^0(\Gamma/\tau)} u_e v_e.$$
We define the following vector subspace $W_{(\Gamma,\tau)}$ of $\overrightarrow{\Pi_{(\Gamma,\tau)}}$:
$$
W_{(\Gamma,\tau)}=\left\lbrace w\in \overrightarrow{\Pi_{(\Gamma,\tau)}} \:\: \vert \:\: 
\forall \alpha,\beta\in H_1(\Gamma;\Z/2\Z), \:\: \langle w,\mu (\alpha,\beta)\rangle=0\right\rbrace.$$
By definition, we have
$$W_{(\Gamma,\tau)}=\bigcap_{\alpha,\beta\in H_1(\Gamma;\Z/2\Z)} Ker \left(\mu^*(\alpha,\beta)\right),$$
where $\mu^*(\alpha,\beta)$ denotes the linear map dual to the vector 
$\mu(\alpha,\beta)$.

\begin{theorem}\label{thm:main}
Let $(\Gamma,\tau)$ be a real trivalent graph, and let $\tau_{\Gamma, 1}:S_\Gamma\to
S_\Gamma$ and $\tau_{\Gamma, 2}:S_\Gamma\to S_\Gamma$ be two real structures
 above $(\Gamma,\tau)$.  Then we have
$$\tau_{\Gamma, 1 *}=\tau_{\Gamma, 2 *} \Longleftrightarrow\tau_{\Gamma, 1}-\tau_{\Gamma, 2}\in W_{(\Gamma,\tau)}.$$ 
\end{theorem}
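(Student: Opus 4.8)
The plan is to reduce the statement to a purely homological computation, by exploiting that two real structures above $(\Gamma,\tau)$ differ only by a product of Dehn twists supported on the cylinders $S_e$. First I would set $h=\tau_{\Gamma,1}\circ\tau_{\Gamma,2}$. Since both lifts restrict to the same map on every $S_v$ and on every cylinder where they agree (by conditions (1)--(3) defining a lift), and since they differ exactly on the cylinders indexed by the nonzero coordinates of $w=\tau_{\Gamma,1}-\tau_{\Gamma,2}\in\overrightarrow{\Pi_{(\Gamma,\tau)}}$, the computations in the proofs of Lemma \ref{lem:2 patch} and Lemma \ref{lem:2 patch2} show that $h$ is isotopic to the product $D_w$ of the Dehn twists $T_{\gamma_{\tilde e}}$ for each $\tau$-invariant edge $\tilde e$ over $\mathrm{supp}(w)$, together with the twists along $\gamma_{\tilde e}-\gamma_{\tau(\tilde e)}$ for each pair over $\mathrm{supp}(w)$. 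As $\tau_{\Gamma,2}$ is an involution, $\tau_{\Gamma,1*}=h_*\circ\tau_{\Gamma,2*}$, so that $\tau_{\Gamma,1*}=\tau_{\Gamma,2*}$ if and only if $D_{w*}=\Id$ on $H_1(S_\Gamma;\Z/2\Z)$. This reduces the theorem to proving that $D_{w*}=\Id$ if and only if $w\in W_{(\Gamma,\tau)}$.

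Next I would compute $D_{w*}$ via the Picard--Lefschetz formula, which over $\Z/2\Z$ reads $T_{c*}(x)=x+(x\cdot c)\,c$. The crucial input is Lemma \ref{lem:hodge dec}: all the curves $\gamma_{\tilde e}$ lie in the isotropic subspace $H_{1,0}(S_\Gamma;\Z/2\Z)$, so the relevant Dehn twists pairwise commute on homology and one obtains
$$D_{w*}(x)=x+\sum_{\tilde e\in\pi^{-1}(w)}(x\cdot\gamma_{\tilde e})\,\gamma_{\tilde e},$$
where $\pi^{-1}(w)\subset\Ed^0(\Gamma)$ is the $\tau$-invariant set of edges lying over $\mathrm{supp}(w)$ (the pair case $\gamma_{\tilde e}-\gamma_{\tau(\tilde e)}$ contributing both $\tilde e$ and $\tau(\tilde e)$). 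Since the form vanishes on $H_{1,0}$, the correction term vanishes for $x\in H_{1,0}$ and depends only on the $H_{0,1}$-component of $x$; using the decomposition $H_1=H_{1,0}\oplus H_{0,1}$ it therefore suffices to test $D_{w*}=\Id$ on the classes $x=\gamma_\alpha$, $\alpha\in H_1(\Gamma;\Z/2\Z)$. Here I would use that $\gamma_\alpha\cdot\gamma_{\tilde e}$ equals the coefficient $\alpha_{\tilde e}$ of $\tilde e$ in the cycle $\alpha$ (the lift $\gamma_\alpha$ crosses the core circle $\gamma_{\tilde e}$ exactly when $\tilde e\subset\alpha$), so the correction term is the image in $H_{1,0}$ of the $1$-chain $\sum_{\tilde e\in\pi^{-1}(w)}\alpha_{\tilde e}\,\tilde e$ under the surjection $\rho:C_1(\Gamma;\Z/2\Z)\to H_{1,0}$, $\tilde e\mapsto\gamma_{\tilde e}$.

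I would then identify $\ker\rho$: it contains the vertex relations $\sum_{e\ni v}e$ (each $S_v$ bounds its boundary circles in $S_\Gamma$), and a dimension count using $\dim H_{1,0}=g=b_1(\Gamma)$ from Lemma \ref{lem:hodge dec} forces $\ker\rho=Z_1(\Gamma;\Z/2\Z)^{\perp}$ for the standard pairing on $C_1(\Gamma;\Z/2\Z)$. Consequently $D_{w*}=\Id$ if and only if $\sum_{\tilde e\in\pi^{-1}(w)}\alpha_{\tilde e}\,\tilde e\in Z_1^{\perp}$ for every cycle $\alpha$, that is, if and only if
$$\sum_{\tilde e\in\pi^{-1}(w)}\alpha_{\tilde e}\,\beta_{\tilde e}=0\qquad\text{for all }\alpha,\beta\in H_1(\Gamma;\Z/2\Z).$$
Finally I would match this with the definition of $W_{(\Gamma,\tau)}$: unwinding $\mu$ and $\langle\,,\,\rangle$ gives $\langle w,\mu(\alpha,\beta)\rangle=\sum_{e:\,w_e=1}\sum_{\tilde e\in\pi^{-1}(e)}\alpha_{\tilde e}\beta_{\tilde e}=\sum_{\tilde e\in\pi^{-1}(w)}\alpha_{\tilde e}\beta_{\tilde e}$, so the two conditions coincide and the equivalence $D_{w*}=\Id\iff w\in W_{(\Gamma,\tau)}$ follows.

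The step I expect to be the main obstacle is the first one: carefully justifying that the global map $\tau_{\Gamma,1}\circ\tau_{\Gamma,2}$ is isotopic to exactly the product of Dehn twists recorded by $w$, and in particular getting the pair case $\gamma_{\tilde e}-\gamma_{\tau(\tilde e)}$ right, since $\tau_{\Gamma,2}$ swaps $S_{\tilde e}$ and $S_{\tau(\tilde e)}$ and one must track how the two local models of Lemma \ref{lem:2 patch2} compose. Once this identification is secured (the remaining ambiguities of the real structure class, namely isotopies and even powers of Dehn twists, act trivially on $H_1$ and so do not affect $D_{w*}$), Steps 2--4 are a direct linear-algebra computation governed by the isotropy of $H_{1,0}$.
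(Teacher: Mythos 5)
Your proposal is correct and follows essentially the same route as the paper's proof: both reduce via Lemma \ref{lem:hodge dec} to testing on the classes $\gamma_\alpha$, realise the difference of the two lifts as Dehn twists along the core circles $\gamma_e$ over the support of $\tau_{\Gamma,1}-\tau_{\Gamma,2}$ (Lemmas \ref{lem:2 patch} and \ref{lem:2 patch2}), and match the resulting correction term with $\langle \tau_{\Gamma,1}-\tau_{\Gamma,2},\mu(\alpha,\beta)\rangle$. The only cosmetic differences are that you phrase the displacement via $h=\tau_{\Gamma,1}\circ\tau_{\Gamma,2}$ and Picard--Lefschetz, and detect vanishing of $\sum(\gamma_\alpha\cdot\gamma_{\tilde e})\gamma_{\tilde e}$ by identifying $\ker\bigl(C_1(\Gamma;\Z/2\Z)\to H_{1,0}(S_\Gamma;\Z/2\Z)\bigr)$, where the paper instead pairs against the $\gamma_\beta$ using non-degeneracy of the intersection form.
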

\begin{proof}
Clearly, both maps $\tau_{\Gamma, 1 *}$ and $\tau_{\Gamma, 2 *}$ have the same
restriction  to   $H_{1,0}(S_\Gamma;\Z/2\Z)$.
Hence according to
Lemma \ref{lem:hodge dec}, we are left to show that 
$$
\forall \alpha \in H_1(\Gamma;\Z/2\Z),\:\:  
 \tau_{\Gamma, 1 *}(\gamma_\alpha)=\tau_{\Gamma, 2 *}(\gamma_\alpha)  \Longleftrightarrow \tau_{\Gamma, 1}-\tau_{\Gamma, 2}\in W_{(\Gamma,\tau)}.
$$ 
Given a vector $w\in \overrightarrow{\Pi_{(\Gamma,\tau)}}$, we denote by
$\mathcal E(w)$ the set of 
edges $e\in\Ed^0(\Gamma)$ 
such that $\langle w,\pi(e)\rangle=1$.
Choosing other representatives of the real structure class of
$\tau_{\Gamma, 1}$ 
and $\tau_{\Gamma, 2}$ if necessary, 
we may assume that 
these latter
coincide on 
$$\displaystyle S_\Gamma\setminus \bigcup_{e\in \mathcal E(\tau_{\Gamma,
    1}-\tau_{\Gamma, 2})}S_e, $$
 and differ by a composition with a Dehn
twist along $\gamma_e$ for all edges $e$ in 
$\mathcal E(\tau_{\Gamma,1}-\tau_{\Gamma, 2})$.
Hence we have
\begin{equation}\label{equ:diff action}
\tau_{\Gamma, 1 *}(\gamma_\alpha)=\tau_{\Gamma, 2 *}(\gamma_\alpha)+ \sum_{e\in \mathcal
  E(\tau_{\Gamma, 1}-\tau_{\Gamma, 2})\cap \alpha} \gamma_{e}.
\end{equation}

Since the intersection form is non-degenerate on $H_1(S_\Gamma;\Z/2\Z)$, we 
have 
$$\tau_{\Gamma, 1 *}(\gamma_\alpha)-\tau_{\Gamma, 2 *}(\gamma_\alpha)=0
\Longleftrightarrow
\forall \gamma\in H_1(S_\Gamma;\Z/2\Z), \ (\tau_{\Gamma, 1 *}(\gamma_\alpha)-\tau_{\Gamma,
  2 *}(\gamma_\alpha))\cdot \gamma=0,$$
where $a \cdot b\in\Z/2\Z$ stands for the intersection product of two
classes $a$ and $b$ in $H_1(S_\Gamma;\Z/2\Z)$.
Combining Relation $(\ref{equ:diff action})$ and Lemma \ref{lem:hodge
  dec},
 we obtain
$$\tau_{\Gamma, 1 *}(\gamma_\alpha)-\tau_{\Gamma, 2 *}(\gamma_\alpha)=0
\Longleftrightarrow
\forall \beta \in H_1(\Gamma;\Z/2\Z), \ \sum_{e\in \mathcal
  E(\tau_{\Gamma, 1}-\tau_{\Gamma, 2})\cap \alpha} \gamma_{e}\cdot \gamma_\beta=0.$$
By construction we have
$$\sum_{e\in \mathcal
  E(\tau_{\Gamma, 1}-\tau_{\Gamma, 2})\cap \alpha} \gamma_{e}\cdot \gamma_\beta=
\langle \tau_{\Gamma, 1}-\tau_{\Gamma, 2} ,\ \mu( \alpha,\beta)\rangle , $$
so  the result is proved.
\hfill $\Box$
\end{proof}

\begin{example}
{\rm Let us consider the real trivalent graph $(\Gamma,\tau)$ of genus 3 depicted in
Figure~\ref{fig:twist}b, with two 
$\tau$-invariant edges $e_1$ and $e_2$.
There exists 16 real structures  above $(\Gamma,\tau)$, distributed into $8$ 
parallel affine subspaces of  $\Pi_{(\Gamma,\tau)}$ with direction
$W_{(\Gamma,\tau)}=\Z/2\Z \pi(e_1+e_2)$. 
Note that the real part of the real topological surface $(S_\Gamma,\tau_\Gamma)$
is composed 
of two circles for any element $\tau_\Gamma$ of $\Pi_{(\Gamma,\tau)}$. 
Furthermore, all possible real structures  above 
 $(\Gamma,\tau)$ produce exactly two
 real isomorphism types of real topological surfaces:
 $(S_\Gamma,\tau_\Gamma)$ is
  of type\footnote{Recall that a connected real topological surface $(S,\tau)$ is of
  type \RNum{1} if $S\setminus\R S$ is disconnected, and of type \RNum{2} otherwise.} 
\RNum{1} for $4$ out of these $8$ families of real structures, and
 of type \RNum{2} for the $4$ remaining families.} 
\end{example}

\begin{corollary}\label{cor:patch max}
Let $(\Gamma,\tau)$ be a real trivalent graph, and let $\tau_{\Gamma, 1}:S_\Gamma\to
S_\Gamma$ and $\tau_{\Gamma, 2}:S_\Gamma\to S_\Gamma$ be two real structures
 above $(\Gamma,\tau)$. Assume that $(S_\Gamma,\tau_{\Gamma,1})$ is a maximal real
topological  surface. 
Then $(S_\Gamma,\tau_{\Gamma,2})$ is also  maximal  if and only if the
difference $\tau_{\Gamma,1}-\tau_{\Gamma, 2}$ lies in $W_{(\Gamma,\tau)}$.  
\end{corollary}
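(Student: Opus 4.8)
The plan is to deduce the statement directly from Theorem~\ref{thm:main}, using as a bridge the classical characterisation of maximality in homological terms recalled in the introduction: a real topological surface with non-empty real part is maximal if and only if the induced involution on its first $\Z/2\Z$-homology is trivial, that is, equal to $\Id$. Granting this characterisation, the corollary becomes a short formal consequence of the equivalence established in Theorem~\ref{thm:main}, since the latter already converts the equality of the two homological actions into the condition $\tau_{\Gamma,1}-\tau_{\Gamma,2}\in W_{(\Gamma,\tau)}$.

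Concretely, I would argue as follows. Since $(S_\Gamma,\tau_{\Gamma,1})$ is maximal, its real part is non-empty and the characterisation yields $\tau_{\Gamma,1 *}=\Id$ on $H_1(S_\Gamma;\Z/2\Z)$. I would then check that $\R S_\Gamma$ is non-empty for $\tau_{\Gamma,2}$ as well: the fixed locus of any lift contains the fixed loci of the model involutions attached to the $\tau$-invariant vertices of $\Gamma$ (by the defining conditions of a lift, using $\sigma_{\val(v)}$ or $\widetilde\sigma$, both of which have non-empty fixed locus), and these contributions do not depend on the chosen lift. Maximality of $\tau_{\Gamma,1}$ guarantees that $\Gamma$ has at least one $\tau$-invariant vertex, so that the common contribution is present and the characterisation also applies to $\tau_{\Gamma,2}$. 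Therefore $(S_\Gamma,\tau_{\Gamma,2})$ is maximal if and only if $\tau_{\Gamma,2 *}=\Id$. As $\tau_{\Gamma,1 *}=\Id$, this is in turn equivalent to $\tau_{\Gamma,1 *}=\tau_{\Gamma,2 *}$, which by Theorem~\ref{thm:main} holds if and only if $\tau_{\Gamma,1}-\tau_{\Gamma,2}\in W_{(\Gamma,\tau)}$. This chain of equivalences is exactly the assertion.

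Since the homological heart of the matter is entirely contained in Theorem~\ref{thm:main}, the only genuine content left here is the input characterisation of maximality, which I expect to be the main obstacle. It is a standard consequence of Smith theory --- the Smith--Thom inequality together with the analysis of its equality case --- and I would either invoke it as a known fact or recall its short proof when maximality is discussed alongside the patchworking construction. A minor accompanying point, which I would verify but not belabour, is the non-emptiness of the real part of the second structure; note that it is precisely the hypothesis of maximality of $\tau_{\Gamma,1}$ that excludes the degenerate case in which $\tau$ acts freely on $\Gamma$ and every lift has empty real part, a case in which the homological characterisation could not be applied.
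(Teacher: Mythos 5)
Your proposal is correct and follows essentially the same route as the paper: both deduce the corollary from Theorem~\ref{thm:main} via the classical fact that a real surface with non-empty real part is maximal if and only if the induced involution on $H_1(\,\cdot\,;\Z/2\Z)$ is trivial (the paper cites \cite[Proposition 5.4.9]{RisBen90}), and both observe that non-emptiness of the real part for one lift forces it for every lift. The only cosmetic difference is that the paper phrases this last point via the existence of a $\tau$-invariant edge while you use a $\tau$-invariant vertex; for a real trivalent graph these are equivalent, so there is no gap.
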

\begin{proof}
Recall that a real
topological  surface $(S,\tau_S)$ 
with $\R S\ne \emptyset$ is maximal if and only
if $\tau_{S*}=\Id$, see \cite[Proposition 5.4.9]{RisBen90}.
Hence we have $\tau_{\Gamma,1*}=\Id$. Furthermore, since 
 $(S_\Gamma,\tau_{\Gamma,1})$ has a non-empty real part, the trivalent graph $\Gamma$
has at least one $\tau$-invariant edge, which in its turn implies that
any real structure  above 
 $(\Gamma,\tau)$ has a non-empty real part.
 Then, by Theorem~\ref{thm:main},  $\tau_{\Gamma,2*}=\Id$ if and only
 if $\tau_{\Gamma,1}-\tau_{\Gamma, 2} \in W_{(\Gamma,\tau)}$, that is to say $(S_\Gamma,\tau_{\Gamma,2})$ is maximal if and only if   $\tau_{\Gamma,1}-\tau_{\Gamma, 2}$ lies in $W_{(\Gamma,\tau)}$.
\hfill $\Box$
\end{proof}

Next proposition gives an alternative description of the space $W_{(\Gamma,\tau)}$ in
terms of disconnecting edges and disconnecting pairs of edges. 
 Recall that edges are always considered open. 
Given a 
real connected trivalent graph $(\Gamma,\tau)$, 
we define
the following sets:
\begin{itemize}
\item $\Ed^{0,(1)}(\Gamma,\tau)$ is the set of disconnecting
edges of $\Gamma/\tau$ in $\Ed^0(\Gamma/\tau)$, i.e. edges $e\in\Ed^0(\Gamma/\tau)$ such that $\Gamma/\tau\setminus e$ is not connected;
\item $\Ed^{0,(2)}(\Gamma,\tau)$ is the set of pairs $\{\pi(e),\pi(e')\}\subset
  \Ed^0(\Gamma/\tau)$, where $\{e,e'\}\subset  \Ed^0(\Gamma)$ is 
  such that $\pi(e),\pi(e')\notin \Ed^{0,(1)}(\Gamma,\tau)$
and $\Gamma\setminus \{e,e'\}$ is not connected.
\end{itemize}
\begin{proposition}\label{prop:equiv W}
Let $(\Gamma,\tau)$ be a real trivalent graph. Then we have
$$
W_{(\Gamma,\tau)}=\left(\bigoplus_{e\in\Ed^{0,(1)}(\Gamma,\tau)}\Z/2\Z e\right) \oplus \mathrm{Span} \left\lbrace e+e' \mid \{e,e'\} \in \mathrm{Edge}^{0,(2)}(\Gamma,\tau)\right\rbrace.
$$
\end{proposition}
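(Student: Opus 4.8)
The plan is to prove a double inclusion between the two descriptions of $W_{(\Gamma,\tau)}$, using the characterization
$$
W_{(\Gamma,\tau)}=\left\{w \:\vert\: \langle w,\mu(\alpha,\beta)\rangle=0 \text{ for all }\alpha,\beta\in H_1(\Gamma;\Z/2\Z)\right\}
$$
from the definition preceding Theorem~\ref{thm:main}. The key computational ingredient is to understand the bilinear map $\mu$ concretely: since $\mu(\alpha,\beta)$ is the sum of the edges in the support of $\pi(\alpha\cap\beta)$, the pairing $\langle w,\mu(\alpha,\beta)\rangle$ counts, modulo $2$, the edges $e$ appearing in $w$ whose image $\pi(e)$ lies in the common support of two cycles $\alpha$ and $\beta$. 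So $w\in W_{(\Gamma,\tau)}$ means that for every pair of cycles, the number of edges of $w$ simultaneously traversed by both is even.

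First I would verify the inclusion $\supseteq$. For a single disconnecting edge $e\in\Ed^{0,(1)}(\Gamma,\tau)$, I claim $e\in W_{(\Gamma,\tau)}$: if $\pi(e)$ disconnects $\Gamma/\tau$, then no cycle in $H_1(\Gamma/\tau;\Z/2\Z)$ can traverse it, so $\pi(e)$ never lies in the support of $\pi(\alpha\cap\beta)$, whence $\langle e,\mu(\alpha,\beta)\rangle=0$ for all $\alpha,\beta$. For a disconnecting pair $\{e,e'\}\in\Ed^{0,(2)}(\Gamma,\tau)$, the point is that removing $\{e,e'\}$ disconnects $\Gamma$ but neither edge alone does; this forces any cycle of $\Gamma$ to cross $\{e,e'\}$ an even number of times, and in fact to cross $e$ and $e'$ the same number of times mod $2$. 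Hence for any $\alpha$, the edge $e$ lies in the support of $\alpha$ iff $e'$ does, so $\langle e+e',\mu(\alpha,\beta)\rangle = \langle e,\mu(\alpha,\beta)\rangle+\langle e',\mu(\alpha,\beta)\rangle=0$. This gives the inclusion $\supseteq$.

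For the reverse inclusion $\subseteq$, I would take $w\in W_{(\Gamma,\tau)}$, write $w=\sum_{e\in\mathcal E}\pi(e)$ for a subset $\mathcal E$ of edges, and show $w$ decomposes as claimed. The strategy is to use the defining condition with well-chosen test cycles. Taking $\beta=\alpha$ first, $\langle w,\mu(\alpha,\alpha)\rangle=0$ says that every cycle $\alpha$ traverses an even number of the edges in $\mathcal E$; this is precisely the statement that $w$, viewed as a $\Z/2\Z$-cochain on $\Gamma/\tau$, is a cocycle (its coboundary pairing trivially with every cycle), i.e. $w$ lies in the cut space of the graph. The cut space is spanned by edge-cuts, and I would analyze such cuts: each minimal cut either consists of a single disconnecting edge, contributing an element of $\Ed^{0,(1)}$, or is a cut whose restriction to the non-disconnecting edges pairs them into the disconnecting pairs of $\Ed^{0,(2)}$. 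The off-diagonal condition with $\beta\neq\alpha$ then rules out any cut that would pair a disconnecting edge with a non-disconnecting one, forcing the stated direct-sum decomposition.

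The main obstacle I expect is the bookkeeping in the reverse inclusion: translating "$w$ is a cocycle" into the precise combinatorial statement that $\mathcal E$ is a disjoint union of single disconnecting edges and disconnecting pairs requires care, since the cut space of $\Gamma/\tau$ may have elements not of this restricted form, and one must use the full strength of the off-diagonal conditions $\langle w,\mu(\alpha,\beta)\rangle=0$ (not merely the diagonal $\beta=\alpha$ ones) to cut these down. Concretely, the subtlety is that $\mu$ is defined via $\pi(\alpha\cap\beta)$ on the quotient, so the interplay between cycles upstairs in $\Gamma$ and their images downstairs in $\Gamma/\tau$ — particularly how $\tau$-invariant versus $\tau$-swapped edges contribute — must be handled to identify exactly which pairs of non-disconnecting edges are forced to appear together.
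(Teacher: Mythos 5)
Your forward inclusion is essentially the paper's Step 2 and is fine (your cut-based argument that a cycle contains $e$ iff it contains $e'$ for a disconnecting pair is a clean substitute for the paper's theta-subgraph criterion). The reverse inclusion, however, contains a genuine gap, and it is exactly where the real content of the proposition lies. The diagonal conditions $\langle w,\mu(\alpha,\alpha)\rangle=0$ do place $w$ in the cut space, but your structural claim about that space --- that each minimal cut is either a single disconnecting edge or pairs its non-disconnecting edges into elements of $\Ed^{0,(2)}$ --- is false. In the theta graph (two vertices joined by three edges) the cut space is spanned by $e_1+e_2+e_3$, and no edge and no pair of edges is disconnecting; in $K_4$ the cut space is $3$-dimensional while $\Ed^{0,(1)}=\Ed^{0,(2)}=\emptyset$ and $W_\Gamma=0$. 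So the cut space is in general strictly larger than $W_{(\Gamma,\tau)}$, and the passage from ``cocycle'' to the stated decomposition cannot be a statement about cuts alone; the off-diagonal conditions must do almost all of the work, and your proposal only asserts that they ``rule out'' the excess without saying how.

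For comparison, the paper spends two full steps here. First it shows $W_\Gamma\subset U$, where $U$ is spanned by disconnecting edges together with \emph{all} edges belonging to some disconnecting pair: if $e$ is neither, Menger's theorem produces two paths joining the endpoints of $e$ that meet only at those endpoints, hence two cycles whose only common edge is $e$, and the off-diagonal condition for that pair kills the coefficient of $e$. Second --- and this is the delicate part entirely absent from your sketch --- it must exclude an element of $W_\Gamma$ whose support contains edges from $\Ed^{0,(2)}$-pairs \emph{without} their partners. This is done by fixing a cycle $\alpha$ through the support, ordering the supported edges cyclically along $\alpha$, using connecting paths to build a fixed-point-free involution $\sigma$ on $\Z/k\Z$, and deriving from further off-diagonal tests both that $\sigma(i)\equiv i \pmod 2$ and a non-crossing property, which together force a fixed-point-free involution on a set of odd cardinality --- a contradiction. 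Nothing in your outline substitutes for this argument. A secondary, smaller gap: the reduction to $\tau=\Id$ (needed because $\Ed^{0,(1)}$ is defined downstairs in $\Gamma/\tau$ while $\Ed^{0,(2)}$ and the cycles in the definition of $\mu$ live upstairs in $\Gamma$) is acknowledged in your last paragraph but not carried out; the paper devotes its Step 1 to it.
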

\begin{proof}
Let  us denote by $V$ the $\Z/2\Z$-vector space on the right hand side of
the equality stated in the proposition. Hence we want to show that
$W_{(\Gamma,\tau)}=V$.

\medskip
\textbf{Step 1: reduction to the case $\tau=\Id$.}
The map $\pi:\Gamma\to \Gamma/\tau$ induces a
linear map $\widetilde \pi:\overrightarrow{\Pi_{(\Gamma,\Id)}}\to\overrightarrow{\Pi_{(\Gamma,\tau)}}$.
 In order to avoid confusion, we denote by 
$\mu_\Gamma$ the map $H_1(\Gamma;\Z/2\Z)\times H_1(\Gamma;\Z/2\Z)\to
\overrightarrow{\Pi_{(\Gamma,\Id)}}$, 
 and
we keep the notation  
$\mu : H_1(\Gamma;\Z/2\Z)\times H_1(\Gamma;\Z/2\Z) \to
\overrightarrow{\Pi_{(\Gamma,\tau)}}$. 
 Recall that
$$W_{(\Gamma,\Id)}=\left\lbrace w\in \overrightarrow{\Pi_{(\Gamma,\Id)}} \:\: \vert \:\: 
\forall \alpha,\beta\in H_1(\Gamma;\Z/2\Z), \:\: \langle w,\mu_\Gamma (\alpha,\beta)\rangle=0\right\rbrace.$$

Let $\overrightarrow{\Pi_{(\Gamma,\Id)}}^{sym} \subset \overrightarrow{\Pi_{(\Gamma,\Id)}}$ be the
subvector space consisting of vectors
$w\in\overrightarrow{\Pi_{(\Gamma,\Id)}}$ fixed by the
natural
map induced by $\tau$ on $\overrightarrow{\Pi_{(\Gamma,\Id)}}$. Note
 that the difference between two real structures above $(\Gamma,\tau)$
can always be expressed as such an element.
Given $w\in\overrightarrow{\Pi_{(\Gamma,\Id)}}^{sym}$, 
we write $w=w^{fix}+w^{even}$ where $w^{fix}$ is supported by the edges of $\Gamma$ fixed by $\tau$ and $w^{even}$ is supported by pairs of edges exchanged by $\tau$.
Define the surjective linear map $\bar \pi:\overrightarrow{\Pi_{(\Gamma,\Id)}}^{sym}\to
\overrightarrow{\Pi_{(\Gamma,\tau)}}$ by $\bar \pi(w)= \widetilde \pi
(w^{fix}) + \widetilde \pi (w^{1})$, where $w^{even}=w^1+\tau(w^1)$
and the supports of $w^1$ and $\tau(w^1)$ are disjoint.
The vector $w^1$ is not unique, however $\tilde{\pi}(w^1)$  does
clearly not depend on the particular choice of $w^1$. 

\medskip
Let us prove that 
$
W_{(\Gamma,\tau)}=\bar\pi\left( W_{(\Gamma,\Id)}\cap\overrightarrow{\Pi_{(\Gamma,\Id)}}^{sym}\right).
$

Let $w\in\overrightarrow{\Pi_{(\Gamma,\Id)}}^{sym}$ and let
$(\alpha,\beta)\in H_1(\Gamma;\Z/2\Z)\times H_1(\Gamma;\Z/2\Z)$. Write
$\mu_\Gamma (\alpha,\beta)= \mu_\Gamma^{fix} + \mu_\Gamma^{even} +
\mu_\Gamma^{odd}$,
where $\mu_\Gamma^{odd}$ is supported by edges
$e\in\alpha\cap\beta$ such that $\tau(e)\notin \alpha\cap\beta$
and  $\mu_\Gamma^{fix}$ and $\mu_\Gamma^{even}$ are defined as above (in general,  the decomposition is not unique). 
We obviously have
$$\langle w,\mu_\Gamma (\alpha,\beta)\rangle =
 \langle w^{fix},\mu_\Gamma^{fix}\rangle +  \langle w^{even},\mu_\Gamma^{odd}\rangle. $$
Similarily, we have $\mu (\alpha,\beta)= \widetilde
\pi(\mu_\Gamma^{fix}) + \widetilde \pi(\mu_\Gamma^{odd})$,
and
$$\langle \bar\pi(w),\mu (\alpha,\beta)\rangle = \langle \bar\pi(w^{fix}), \widetilde
\pi(\mu_\Gamma^{fix})\rangle +
\langle \bar\pi(w^{even}),\widetilde\pi(\mu_\Gamma^{odd})\rangle. $$
Thus  $\langle w,\mu_\Gamma (\alpha,\beta)\rangle = \langle \bar\pi(w),\mu
(\alpha,\beta)\rangle$,
which implies that
$
W_{\Gamma,\tau}=\bar\pi\left( W_{\Gamma,\Id}\cap\overrightarrow{\Pi_{(\Gamma,\Id)}}^{sym}\right)$
as announced.

\medskip
Assuming that Proposition~\ref{prop:equiv W} is known in the case of $\tau=\Id$ and computing
$ W_{(\Gamma,\Id)}\cap\overrightarrow{\Pi_{(\Gamma,\Id)}}^{sym}$, it is easy to see that it remains
to show that
$$
\Ed^{0,(1)}(\Gamma,\tau)=\pi\left(\Ed^{0,(1)}(\Gamma,\Id)\right) \bigcup
\pi\left(  \left\{\{e,e'\}\subset \Ed^{0,(2)}(\Gamma,\Id)\, |\ \tau(e)=e',
 \textrm{ and } \tau(e')=e\right\}  \right)$$
and 
\begin{eqnarray*}
\Ed^{0,(2)}(\Gamma,\tau)&=&\pi\left(\left\{\{e,e'\}\subset \Ed^{0,(2)}(\Gamma,\Id)\, |\ \tau(e)\neq e'\right\}\right)\\
  &=&\pi\left(\left\{\{e,e'\}\subset \Ed^{0,(2)}(\Gamma,\Id)\, |\ \tau(e)=e
\textrm{ and } \tau(e')=e'\right\}\right)
\\ &
&\cup\ 
\pi\left(\left\{\{e,e'\}\subset \Ed^{0,(2)}(\Gamma,\Id)\, |\ \tau(e)\neq e,\>  \tau(e')\neq e'
\textrm{ and } \tau(e)\neq e'\right\}\right).\notag
\end{eqnarray*}
These equalities follows from the fact that  
 the quotient map $\pi:\Gamma\to \Gamma/\tau$ induces a
surjective map $\pi_*:H_1(\Gamma;\Z/2\Z)\to H_1(\Gamma/\tau;\Z/2\Z)$.

\bigskip
So from now on, we assume that $\tau=\Id$, and we use the shorter
notation
$$W_\Gamma=W_{(\Gamma,\tau)},\qquad \Ed^{0,(1)}(\Gamma)= \Ed^{0,(1)}(\Gamma,\Id),\quad \mbox{and}\quad
\Ed^{0,(2)}(\Gamma)= \Ed^{0,(2)}(\Gamma,\Id).$$

\medskip

\textbf{Step 2:  $V$ is contained in $W_{\Gamma}$.} 
An edge in $\Ed^{0,(1)}(\Gamma)$ is not
contained 
 in the support of any cycle in $H_1(\Gamma;\Z/2\Z)$. In
particular we have
$$\bigoplus_{e\in\Ed^{0,(1)}(\Gamma)}\Z/2\Z e\subset W_{\Gamma}.$$

We claim that a pair of edges $\{e,e'\}\subset\Ed^0(\Gamma)$ is contained in
 $\Ed^{0,(2)}(\Gamma)$ if and only if the two
following conditions are satisfied:
\begin{enumerate}
\item there exists a cycle $\gamma\in H_1(\Gamma;\Z/2\Z)$ 
containing both  $e$ and $e'$;
\item for any theta subgraph $\Theta$
of $\Gamma$ containing $e$ and $e'$, the graph $\Theta\setminus \{e,e'\}$ is not connected,
see Figure \ref{fig:graphs}a and b. 
\end{enumerate}
Indeed, condition $(1)$ is equivalent 
 to the fact that neither $e$ nor $e'$
is in $ \Ed^{0,(1)}(\Gamma)$.
Next, a pair $\{e,e'\}$ satisfying 
condition $(1)$
is in  $\Ed^{0,(2)}(\Gamma)$ if and only if
any path in $\Gamma\setminus\{e\}$ joining the two vertices adjacent
to $e$ contains $e'$. This is equivalent to condition $(2)$.

\begin{figure}[h!]
\begin{center}
\begin{tabular}{ccc}
\includegraphics[width=3cm, angle=0]{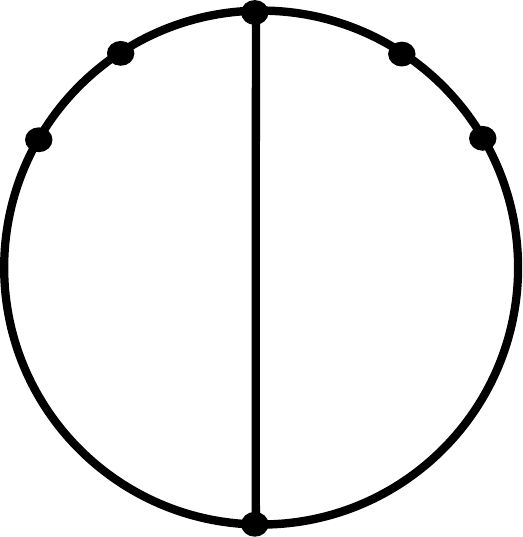}
\put(-85,75){$e$}
\put(-10,75){$e'$}
& \hspace{15ex}
&\includegraphics[width=3cm, angle=0]{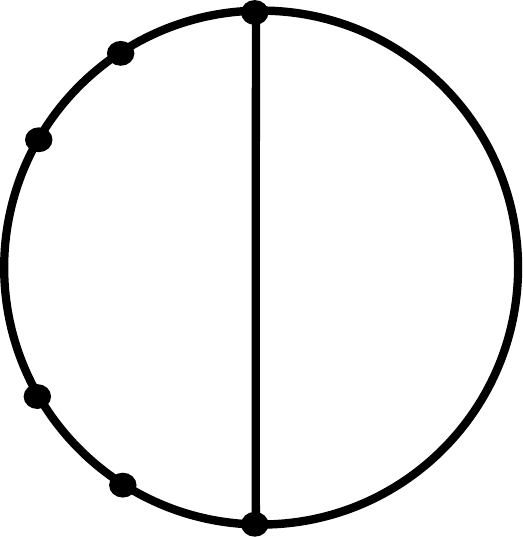}
\put(-85,75){$e$}
\put(-85,10){$e'$}
\\
\\ a) $\Theta\setminus \{e,e'\}$ connected 
&&  b) $\Theta\setminus \{e,e'\}$ disconnected 
\end{tabular}
\end{center}
\caption{} 
\label{fig:graphs}
\end{figure}
It follows from the above claim that
 for any  $\{e,e'\}\in
\Ed^{0,(2)}(\Gamma)$ and any 
$\gamma\in H_1(\Gamma;\Z/2\Z)$, we have
\begin{equation*}\label{equ:same cycle}
  e\subset \gamma \Longleftrightarrow  e'\subset \gamma.
  \end{equation*}
This implies that
$$ \mathrm{Span}\left\lbrace e+e' \mid \{e,e'\}\in \Ed^{0,(2)}(\Gamma)\right\rbrace \subset W_{\Gamma}$$
and so $V\subset W_{\Gamma}$.

\medskip

We define the following vector subspace of  $\overrightarrow{\Pi_{(\Gamma,\tau)}}$:$$U= \left(\bigoplus_{e\in\Ed^{0,(1)}(\Gamma,\tau)}\Z/2\Z e\right)  \oplus \mathrm{Span} \left\lbrace e \mid \exists e' \mbox{ with }\{e,e'\}\in \Ed^{0,(2)}(\Gamma,\tau)\right\rbrace.$$ 

\textbf{Step 3: $W_{\Gamma}$ is contained in $U$.} 
Let $e\notin U$. It means exactly that $e\in\Ed^0(\Gamma)\setminus \Ed^{0,(1)}(\Gamma)$ and that $\Gamma\setminus\{e,e'\}$ is connected for
any $e'\in\Ed^0(\Gamma)\setminus \Ed^{0,(1)}(\Gamma)$. Denote by $v$ and $v'$ the two 
vertices of $\Gamma$ adjacent to $e$.
By Menger's Theorem~\cite[Theorem 3.6.11]{BaRa12}, there exist two paths
$c_1$ and $c_2$ in $\Gamma\setminus \{e\}$ joining $v$ and $v'$, and whose
intersection is reduced to $\{v,v'\}$. Hence $e$ is the only edge
common to 
the two cycles 
$c_1\cup e$ and $c_2\cup e$ in $H_1(\Gamma;\Z/2\Z)$. By definition of $W_{\Gamma}$,
we have $\langle w,e\rangle=0$ for any $w\in W_{\Gamma}$, and so $e\notin W_{\Gamma}$. We conclude that $W_{\Gamma}\subset U$.

\medskip
\textbf{Step 4:  $W_{\Gamma}$ is contained in  $V$.}
 By Step 3 above, an element $\widetilde w$ of $W_{\Gamma}$ can be written in the
(non-unique)  form
$$\widetilde w=w_0+e_1+\ldots +e_k +(e_{k+1} + e'_{k+1}) + \ldots + (e_m + e'_m), $$
where
\begin{itemize}
  \item $\displaystyle
    w_0\in\bigoplus_{e\in\Ed^{0,(1)}(\Gamma)}\Z/2\Z e$;
    \\
\item $\{e_{i},e'_{i}\}\in \Ed^{0,(2)}(\Gamma)$ if $i\in\{k+1,\ldots,m\}$;
  \item for any $i\in\{1,\ldots,k\}$, there exists an edge $e'_i$ of
    $\Gamma$ such that $\{e_i,e'_i\}\in \Ed^{0,(2)}(\Gamma)$;
   
  \item $e_i\ne e_j$ and $\{e_i,e_j\}\notin \Ed^{0,(2)}(\Gamma)$ for any pair
    $\{i,j\}\subset \{1,\ldots,k\}$.
        
\end{itemize}
Recall that $V$ is a sub-vector space of $W_{\Gamma}$ by Step 2, hence
$w=w_0+e_1+\ldots +e_k$ is an element of  $W_{\Gamma}$. If $k=0$, then
$w$ is in $V$, and so is $\widetilde w$. Assume now that $k>0$. In
this case $w$ is an
element of $W_\Gamma$ such that for any edge $e\in\Ed^0(\Gamma)\setminus
\Ed^{0,(1)}(\Gamma)$ with $\langle w,e\rangle =1$, we have 
$\langle w,e' \rangle=0$ for any edge $e'$ with
$\{e,e'\}\in \Ed^{0,(2)}(\Gamma)$. The rest of the proof consists in 
proving by contradiction that such an element cannot exist.

Let $\alpha\in H_1(\Gamma;\Z/2\Z)$ be
a cycle containing an edge in the support of $w$. Let us denote by
$e_1,\ldots,e_k$ all edges belonging to the intersection of the support of $w$ and $\alpha$, enumerated in a cyclic order induced
by $\alpha$. By definition of $W_{\Gamma}$, we have 
$\langle w,\mu(\alpha,\alpha)\rangle=0$, i.e. $k$ is even. 
Let us also denote by $u_i$ the connected component of
$\alpha\setminus\{e_1,\ldots, e_k\}$ adjacent to the edges $e_{i}$ and $e_{i+1}$ (where the indices are taken modulo $k$).
By assumption on $w$, for each $i$ there exists a path $c_i$ in
$\Gamma$ joining
the two connected components of $\alpha\setminus\{e_i,e_{i+1}\}$, and
such that $c_i\cap\alpha$ is reduced to the two endpoints of
$c_i$. Denote by
 $\sigma(i)\in \Z/k\Z$  the  integer such that
 $u_i\cup u_{\sigma(i)}$ contains the two endpoints of $c_i$.
Hence 
$\sigma:\Z/k\Z\to \Z/k\Z$ is an involution with no
fixed points.
The two endpoints of $c_i$ divide $\alpha$ into two connected
components. Let $u$ be one of these latter. Since $w\in W_{\Gamma}$, we have
$\langle w,\mu(\alpha, u\cup c_i))\rangle=0$, that is to say
 $\sigma(i)=i \mod 2$ (see Figure~\ref{fig:graphs2}a). 
\begin{figure}[h!]
\begin{center}
\begin{tabular}{ccc}
\includegraphics[width=3.5cm, angle=0]{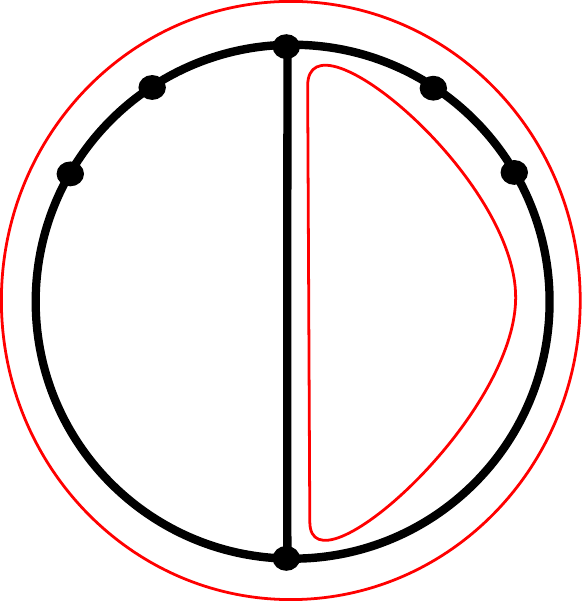}
\put(-98,88){$e_1$}
\put(-10,88){$e_2$}
\put(-98,10){$\alpha$}
\put(-65,50){$c_i$}
\put(-42,50){$c_i\cup u$}
& \hspace{15ex}
&\includegraphics[width=3.5cm, angle=0]{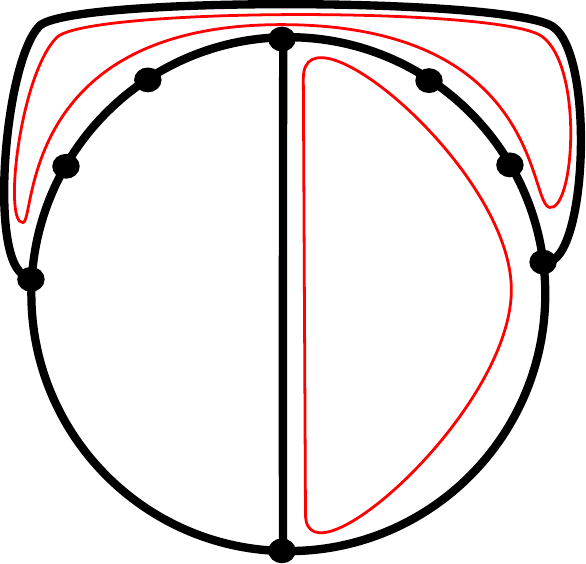}
\put(-35,50){$\beta_1$}
\put(-15,80){$\beta_2$}
\\
\\  a) && b)
\end{tabular}
\end{center}
\caption{} 
\label{fig:graphs2}
\end{figure}
Furthermore, we claim that for any $j$, we have
$$
j\in\{i+1,\ldots,\sigma(i)-1\}\Longleftrightarrow
\sigma(j)\in\{i+1,\ldots,\sigma(i)-1\}.
$$
It is enough to prove the claim in the case when $j=i+1$. If
$\sigma(i+1)\notin\{i+1,\ldots,\sigma(i)-1\}$,  one easily constructs
two cycles $\beta_1$ and $\beta_2$ in $H_1(\Gamma;\Z/2\Z)$ such that
 $\langle w,\mu(\beta_1, \beta_2))\rangle=1$ (see Figure~\ref{fig:graphs2}b), which
contradicts that $w\in W_{\Gamma}$.

Hence the map $\sigma$ induces  an involution on 
$\{2,\ldots,\sigma(1)-1\}$ with no
fixed points. However the cardinal of this latter set is odd, so such
a fixed-point free involution cannot exist.
\hfill $\Box$
\end{proof}

Recall that a graph is said to be \emph{$k$-edge connected} if it
remains connected after removing any set of $l<k$ edges.
\begin{corollary}
Let $(\Gamma,\tau)$ be a real trivalent graph. If 
$\Gamma/\tau$ is $3$-edge connected, then $W_{(\Gamma,\tau)}$ is the trivial vector
space. 
In particular there exists at most one real structure $\tau_\Gamma$
above $(\Gamma,\tau)$ for which $(S_\Gamma,\tau_\Gamma)$ is maximal.
\end{corollary}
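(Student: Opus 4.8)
The plan is to deduce everything from the explicit description of $W_{(\Gamma,\tau)}$ given in Proposition~\ref{prop:equiv W}: since that result exhibits $W_{(\Gamma,\tau)}$ as the direct sum of a summand generated by $\Ed^{0,(1)}(\Gamma,\tau)$ and a summand spanned by the pairs in $\Ed^{0,(2)}(\Gamma,\tau)$, it suffices to prove that both of these edge sets are empty when $\Gamma/\tau$ is $3$-edge connected. The first is immediate: a $3$-edge connected graph is in particular $2$-edge connected, so removing a single edge never disconnects $\Gamma/\tau$, whence $\Ed^{0,(1)}(\Gamma,\tau)=\emptyset$.

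The substance is to show $\Ed^{0,(2)}(\Gamma,\tau)=\emptyset$. I would argue by contradiction: suppose $\{\pi(e),\pi(e')\}$ is such a pair, with $\pi(e)\neq\pi(e')$, neither of them disconnecting in $\Gamma/\tau$, and $\Gamma\setminus\{e,e'\}$ disconnected. Since $\pi$ sends a disconnecting edge of $\Gamma$ to a disconnecting edge of $\Gamma/\tau$ (as recorded in the proof of Proposition~\ref{prop:equiv W}), neither $e$ nor $e'$ is disconnecting in $\Gamma$; hence $\{e,e'\}$ is a minimal two-edge cut, and by the characterization established in Step~2 of that proof, $e$ and $e'$ lie in exactly the same cycles of $\Gamma$, i.e. $e\subset\gamma\Leftrightarrow e'\subset\gamma$ for every $\gamma\in H_1(\Gamma;\Z/2\Z)$. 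I would then transport this property through the quotient: using that $\pi_*:H_1(\Gamma;\Z/2\Z)\to H_1(\Gamma/\tau;\Z/2\Z)$ is surjective, every cycle of $\Gamma/\tau$ is of the form $\pi_*\gamma$, and the multiplicity modulo $2$ of $\pi(e)$ in $\pi_*\gamma$ equals the number of edges of $\pi^{-1}(\pi(e))$ lying on $\gamma$. Comparing this with $\pi(e')$ and invoking that $\{\tau(e),\tau(e')\}$ is again such a cut, one obtains $\pi(e)\subset\pi_*\gamma\Leftrightarrow\pi(e')\subset\pi_*\gamma$ for all $\gamma$; since $\pi(e)$ is not disconnecting it lies on some cycle, which therefore also contains $\pi(e')$. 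Applying the Step~2 characterization now inside $\Gamma/\tau$ then shows that $\{\pi(e),\pi(e')\}$ disconnects $\Gamma/\tau$, contradicting its $3$-edge connectivity.

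The step I expect to be most delicate is precisely this transport of the ``same cycles'' property across $\pi$, because the behaviour of $\{e,e'\}$ under $\tau$ must be controlled: the preimage $\pi^{-1}(\pi(e))$ is $\{e\}$ when $e$ is $\tau$-invariant and $\{e,\tau(e)\}$ otherwise. One must check that the ``mixed'' configuration, where $e$ is $\tau$-invariant but $e'$ is not, cannot occur; the point is that in that case $\{e',\tau(e')\}$ is itself a two-edge cut whose image is a single edge of $\Gamma/\tau$, forcing $\pi(e')$ to be disconnecting and hence excluded by hypothesis, and the reduction formulas in the proof of Proposition~\ref{prop:equiv W} make this precise. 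Once $W_{(\Gamma,\tau)}=\{0\}$ is established, the final assertion is formal: if $\tau_{\Gamma,1}$ and $\tau_{\Gamma,2}$ are both maximal real structures above $(\Gamma,\tau)$, then Corollary~\ref{cor:patch max} gives $\tau_{\Gamma,1}-\tau_{\Gamma,2}\in W_{(\Gamma,\tau)}=\{0\}$, so $\tau_{\Gamma,1}=\tau_{\Gamma,2}$, proving that there is at most one such structure.
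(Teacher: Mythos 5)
The paper states this corollary without any written proof, presenting it as an immediate consequence of Proposition~\ref{prop:equiv W} together with Corollary~\ref{cor:patch max}, and your argument is exactly that intended deduction, correctly carried out: $3$-edge connectivity of $\Gamma/\tau$ kills $\Ed^{0,(1)}(\Gamma,\tau)$ trivially and kills $\Ed^{0,(2)}(\Gamma,\tau)$ via the ``same cycles'' characterization from Step~2 transported through $\pi_*$. Your treatment of the mixed case (where $e$ is $\tau$-invariant but $e'$ is not, which forces $\pi(e')$ to be disconnecting and is therefore excluded) is precisely the one delicate point the paper's omission glosses over, and it is handled correctly.
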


\section{Haas' Theorem}\label{sec:patch&haas}

Here we explain how the  results from the previous section
specialise to Haas' Theorem
in
 the particular case of non-singular tropical curves in $\R^2$.
We first give
in  Section \ref{sec:viro}  a tropical formulation of Viro's
combinatorial 
patchworking, and state  Haas' Theorem classifying combinatorial
patchworkings  producing $M$-curves. We prove this latter in
Section \ref{sec:haas}.

We assume that the reader has a certain acquaintance with
tropical geometry. We refer to \cite{BIMS15} for an
 introduction to 
tropical geometry at the level needed here, as well as a more detailed exposition of combinatorial
patchworking and Haas' Theorem. 

\subsection{Combinatorial patchworking and Haas' Theorem}\label{sec:viro}

Here we present the reformulation of the particular case of
unimodular combinatorial patchworking
in
terms of twist-admissible
sets of edges of a non-singular plane
tropical curve given in \cite{BIMS15}.
\medskip
If $e$ is an edge of a tropical curve in $\R^2$, we denote by
$(x_e,y_e)\in\Z^2$ a primitive  direction vector of the line
supporting $e$
(note that $(x_e,y_e)$ is well defined up to sign, however this does not play
a role in what follows). 

\begin{definition}
{\rm Let $C$ be a non-singular tropical curve in $\R^2$. 
A subset $T$ of $\Ed^0(C)$ is called {\em twist-admissible}
if  for any cycle
$\gamma$ of $C$, we have
\begin{equation}\label{Men}
\sum_{e\in\gamma\cap T} (x_e,y_e)=0\quad \text{mod }2.
\end{equation}}
\end{definition} 

Given $T$ a twist-admissible subset of edges of a non-singular
tropical curve $C$ in $\R^2$,  perform the following operations:

\begin{itemize}
\item[(a)] at each vertex of $C$,  draw three arcs as depicted in Figure
  \ref{fig:patch gen}a;

\item[(b)] for each  edge $e\in \Ed^0(C)$ adjacent to the vertices $v$ and $v'$,
 join the two corresponding arcs at $v$  to the
  corresponding ones for $v'$ in the following way: if $e\notin T$, then join these arcs
 as depicted in Figure
  \ref{fig:patch gen}b; if $e\in T$, then join these arcs
 as depicted in Figure
  \ref{fig:patch gen}c; 
denote by $\P$ the  obtained collection of arcs;

\item[(c)]  choose arbitrarily an arc of $\P$ 
and a pair of signs for it; 

\item[(d)]  associate pairs of signs to all arcs of $\P$ using the
  following rule: given  
  $e\in \Ed(C)$
the pairs
of signs of the two arcs of $\P$ corresponding to $e$ 
differ by a factor $((-1)^{x_e},(-1)^{y_e})$, see Figure
\ref{fig:patch gen}b and c;
(Note that the compatibility condition
\eqref{Men}
 precisely means that this rule is consistent.)
\begin{figure}[h]
\centering
\begin{tabular}{ccc}
\includegraphics[height=4cm,
  angle=0]{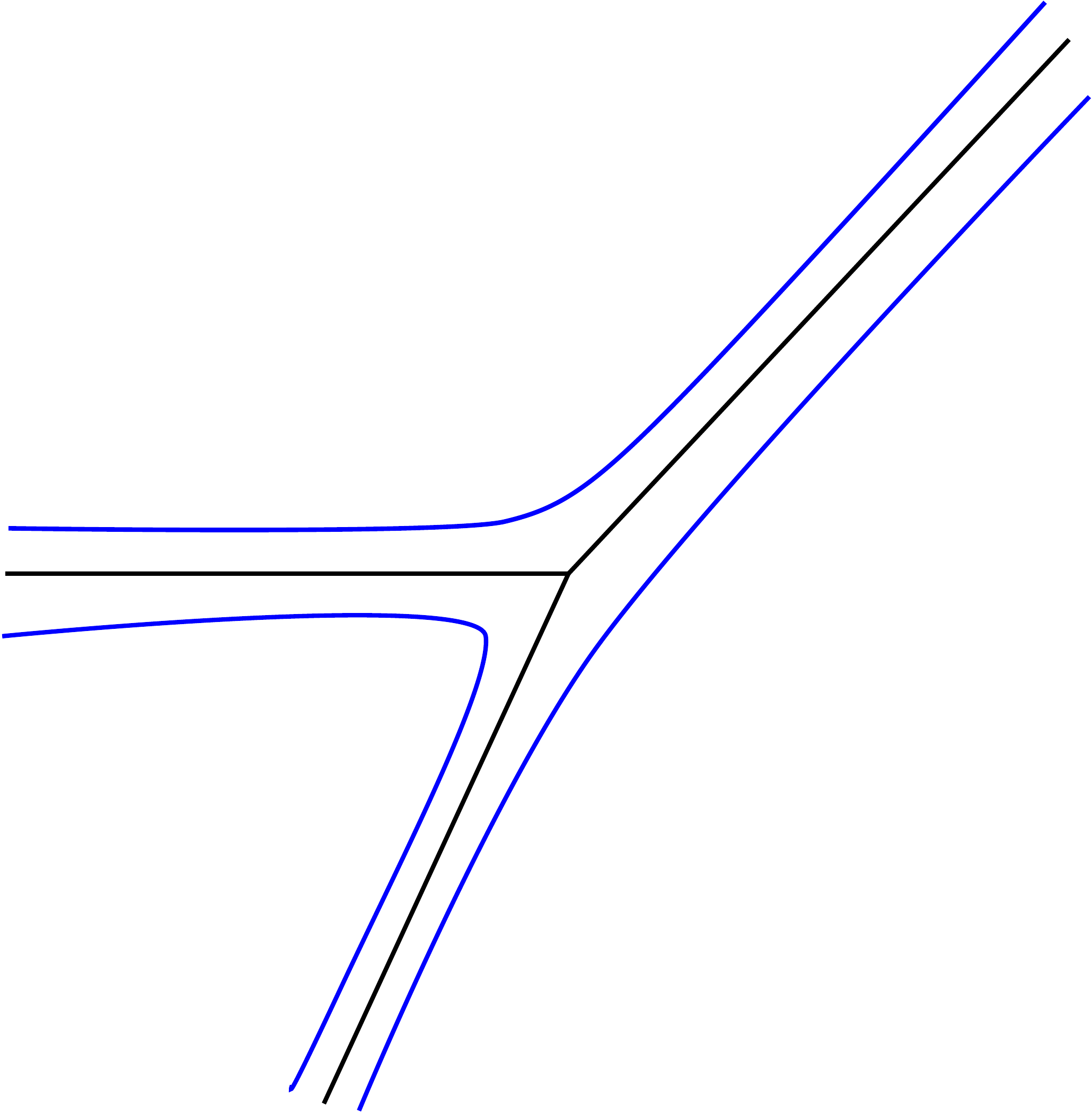} \hspace{2ex} &
\includegraphics[height=5cm, angle=0]{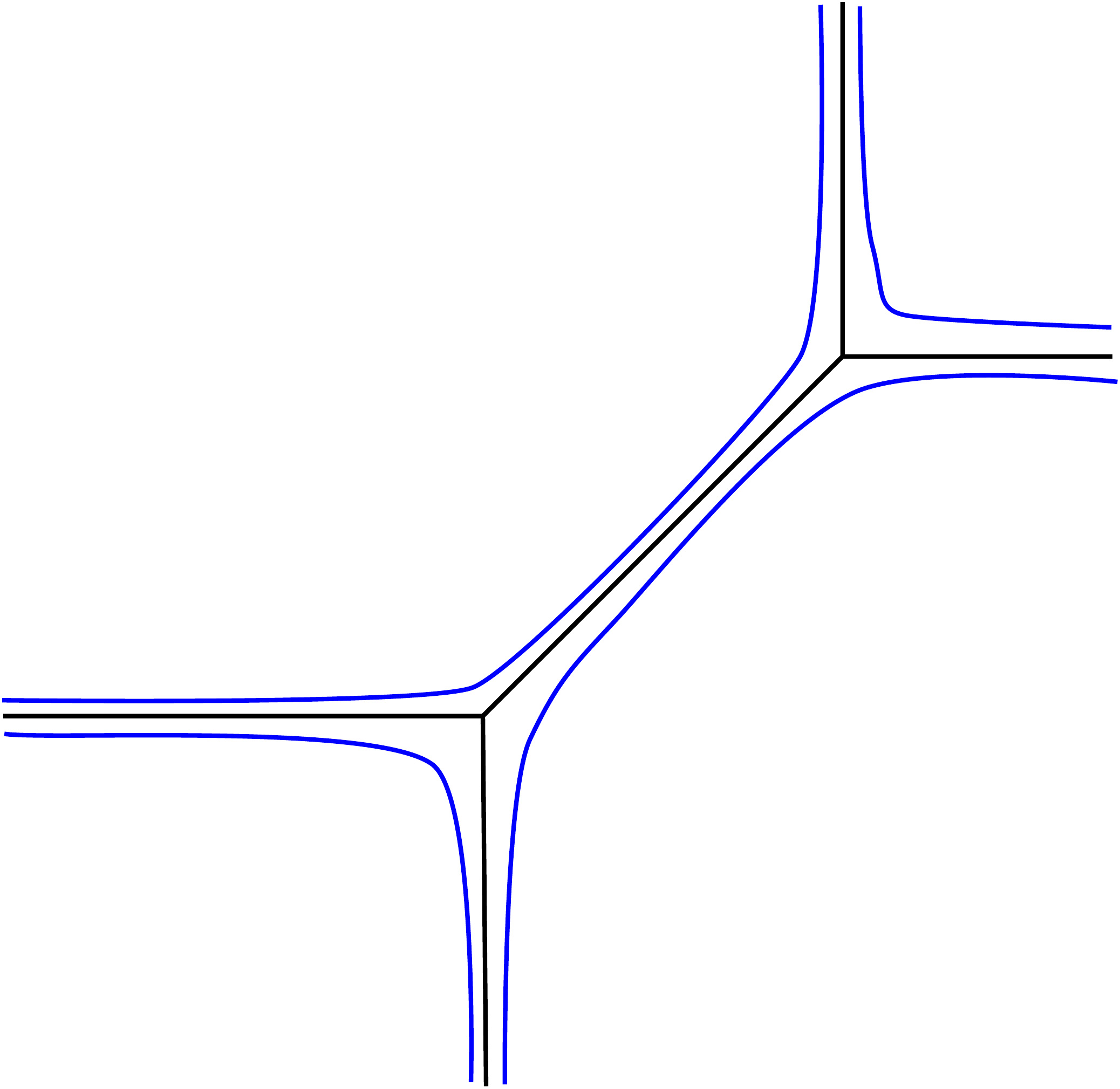}
\put(-100, 65){\tiny{$(\epsilon_1,\epsilon_2)$}}
 \put(-70, 45){\tiny{$((-1)^{x_e}\epsilon_1,(-1)^{y_e}\epsilon_2)$}} \hspace{2ex}
&\includegraphics[height=5cm, angle=0]{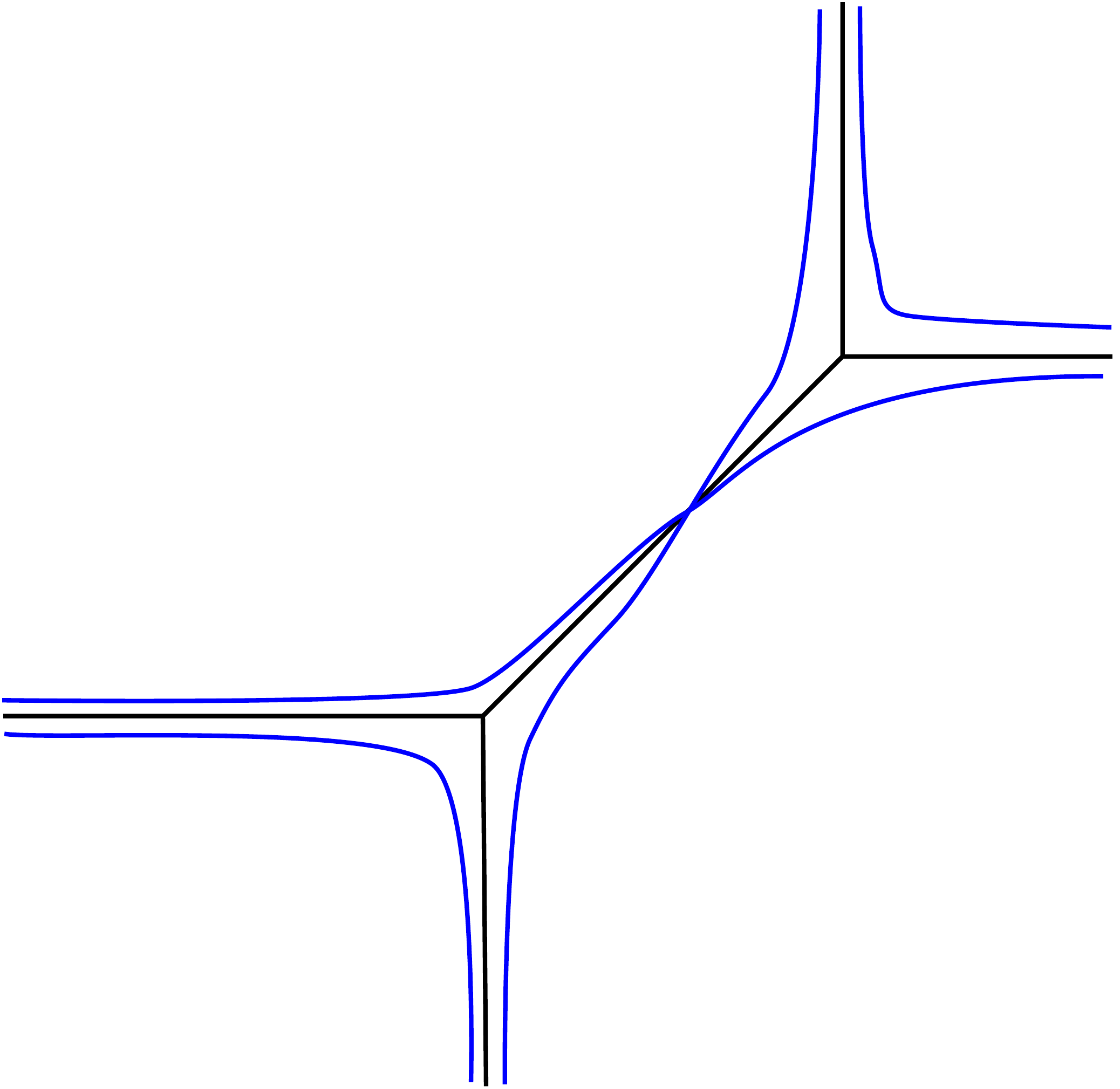}
\put(-100, 65){\tiny{$(\epsilon_1,\epsilon_2)$}}
  \put(-70, 45){\tiny{$((-1)^{x_e}\epsilon_1,(-1)^{y_e}\epsilon_2)$}}
 \\
\\  a) \hspace{2ex} & b) $e \notin T$ \hspace{2ex} & c) $e \in T$
\end{tabular}
\caption{A patchworking of a non-singular tropical curve}\label{fig:patch gen}
\end{figure}

\item[(e)] map each arc $A$ of $\P$ to $(\RR^\times)^2$
by $(x,y)\mapsto (\epsilon_1 \exp(x), \epsilon_2 \exp(y))$, where 
$(\epsilon_1, \epsilon_2)$ is the pair of signs  
associated to $A$. Denote by $C_T$ the  curve in $(\RR^\times)^2$
which is the union of these images over all arcs
of $\P$.

\end{itemize}

Note that all possible choices at step (c) above produce the same
curve $C_T$ up to the action of
$(\Z/2\Z)^2$
by axial symmetries
$(z,w)\mapsto (\pm z,\pm w)$.
 Viro combinatorial patchworking Theorem \cite{V9}
may be reformulated in terms of twist-admissible
sets as follows.
\begin{theorem}[Viro]
\label{thm:viro}
Let $T$ be a twist-admissible subset of edges of a non-singular
tropical curve $C$ in $\R^2$. Then there exists a real algebraic curve
in $(\C^\times)^2$ with the same Newton polygon as $C$, and whose real 
 part in $(\R^\times)^2$ is isotopic to $C_T$.
\end{theorem}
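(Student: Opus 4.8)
The plan is to read Theorem~\ref{thm:viro} as a repackaging of Viro's original combinatorial patchworking theorem~\cite{V9}, the only genuine content being a dictionary between twist-admissible sets of edges and distributions of signs on the lattice points of the Newton polygon. Recall that a non-singular tropical curve $C$ in $\R^2$ is dual to a primitive, regular (coherent) triangulation $\mathcal{T}$ of its Newton polygon $\Delta$: vertices of $C$ correspond to triangles of $\mathcal{T}$, edges $e\in\Ed^0(C)$ correspond to interior edges $e^*$ of $\mathcal{T}$ (the integer direction of $e^*$ being $(x_e,y_e)$ turned by a quarter turn), and the regularity of $\mathcal{T}$ is witnessed by the very convex piecewise-linear function $\nu\colon\Delta\cap\Z^2\to\R$ defining $C$. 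Viro's theorem attaches to $\mathcal{T}$ together with a sign distribution $\epsilon\colon\Delta\cap\Z^2\to\{\pm 1\}$ --- equivalently the real polynomial $f_t(x,y)=\sum_{(i,j)}\epsilon_{ij}\,t^{\nu(i,j)}x^iy^j$ for $t>0$ small enough --- a real algebraic curve in $(\C^\times)^2$ with Newton polygon $\Delta$ whose real part in $(\R^\times)^2$ is isotopic to a piecewise-linear curve $\mathcal{L}_\epsilon$ determined combinatorially by $(\mathcal{T},\epsilon)$. Hence it is enough to produce, from a twist-admissible $T$, a sign distribution $\epsilon_T$ for which $\mathcal{L}_{\epsilon_T}$ is isotopic in $(\R^\times)^2$ to the curve $C_T$ assembled in steps (a)--(e).

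The heart of the matter is this dictionary, and it is exactly what steps (c)--(d) are set up to encode. Indeed, a choice of signs on the arcs of $\P$ is the same datum as a sign distribution $\epsilon$ on $\Delta\cap\Z^2$, read off in a fixed reference quadrant: the two arcs glued along an edge $e$ are carried into quadrants whose sign pairs differ by $((-1)^{x_e},(-1)^{y_e})$, precisely the factor by which the monomials change sign when one crosses the dual edge $e^*$ between its two lattice endpoints. I would then check that the twist-admissibility condition \eqref{Men}, asserting $\sum_{e\in\gamma\cap T}(x_e,y_e)\equiv 0\pmod 2$ along every cycle $\gamma$ of $C$, is exactly the cocycle condition making the sign-propagation rule of step (d) globally consistent --- this is the content of the parenthetical remark following step (d). Phrased cohomologically, the twist data $T$ together with the monodromy factors define a $(\Z/2\Z)^2$-valued class on $H_1(C;\Z/2\Z)$, and \eqref{Men} says it vanishes, so that a compatible $\epsilon_T$ exists; it is unique up to the action of $(\Z/2\Z)^2$ by the reflections $(z,w)\mapsto(\pm z,\pm w)$, which matches precisely the freedom of the arbitrary choice in step (c).

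It then remains to identify the two combinatorial curves. Near a vertex $v$ of $C$, dual to a triangle of $\mathcal{T}$, the three arcs of step (a) endowed with their signs reproduce, in the four quadrants, the local picture of $\mathcal{L}_{\epsilon_T}$ inside that triangle; along an edge $e$, the untwisted gluing of Figure~\ref{fig:patch gen}b for $e\notin T$ and the twisted gluing of Figure~\ref{fig:patch gen}c for $e\in T$ match the two possible ways of connecting the reflected copies of $\mathcal{L}_{\epsilon_T}$ across the dual edge $e^*$. As both $C_T$ and $\mathcal{L}_{\epsilon_T}$ are obtained by gluing the same local pieces according to the same combinatorial data, they are isotopic in $(\R^\times)^2$. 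Applying Viro's theorem to $(\mathcal{T},\epsilon_T)$ --- legitimate since $\mathcal{T}$ is regular --- then produces a real algebraic curve in $(\C^\times)^2$ with Newton polygon $\Delta$, hence the same as that of $C$, whose real part in $(\R^\times)^2$ is isotopic to $\mathcal{L}_{\epsilon_T}$, and therefore to $C_T$, which is the assertion.

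The step I expect to be the main obstacle is the precise verification that \eqref{Men} is the consistency condition for the sign rule and that the resulting $\epsilon_T$ yields $\mathcal{L}_{\epsilon_T}$ isotopic to $C_T$: this is a local-to-global bookkeeping of how the twists reroute strands between quadrants and how signs accumulate around cycles, and it is the only place where the combinatorics of twist-admissible sets genuinely interacts with Viro's construction. Everything else is either the standard tropical-curve/regular-triangulation duality or a direct invocation of~\cite{V9}.
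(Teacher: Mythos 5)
Your argument is correct in outline, but it takes a genuinely different route from the one the paper indicates. The paper treats Theorem~\ref{thm:viro} as a known reformulation (due to \cite{BIMS15}) of Viro's theorem and, rather than returning to sign distributions, recovers it from Mikhalkin's Theorem~\ref{thm:patch mikh} combined with Examples~\ref{exa:binom} and~\ref{exa:trinom}: the real part of the limit $V_\infty$ is assembled from the real loci of the coamoebas of the trinomials $P_1^{\Delta_v}$ (which reproduce the three arcs of step (a)) and of the binomials $P_1^{\Delta_e}$ (cylinders realizing the two gluings of step (b)), the set $T$ recording along which cylinders the gluing is composed with a half-twist. That route buys an explicit family of equations $P_t$ and, crucially for this paper, the pair-of-pants decomposition of the limit curve that is exploited in the proof of Theorem~\ref{thm:haas} in Section~\ref{sec:haas}. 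Your route --- dualizing $C$ to a primitive regular triangulation of $\Delta$ and matching twist-admissible sets with Viro sign distributions --- is precisely the dictionary of \cite[Theorem 3.4 and Remark 3.9]{BIMS15} which the paper cites without reproving; it is more elementary (no coamoebas, no Hausdorff limits) and closer to Viro's original formulation in \cite{V9}. One step you should not leave implicit: the vanishing along cycles that you extract from \eqref{Men} shows the quadrant assignment to arcs is globally consistent, i.e.\ that $C_T$ is well defined, but to invoke \cite{V9} you still need an actual sign function $\epsilon_T$ on $\Delta\cap\Z^2$ inducing that assignment. This is a separate, though easy, cohomological point: the data prescribe the products $\epsilon_T(p)\epsilon_T(p')$ along edges of the triangulation, and these integrate to a function on lattice points because the triangulated polygon is simply connected and the consistency around each primitive triangle is exactly the trinomial local picture of step (a).
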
 

\begin{remark}
{\rm  It is 
  possible 
to produce an equation for the real algebraic curve whose existence is
attested by Theorem \ref{thm:viro}, see~{\cite[Remark 3.9]{BIMS15}}.}
\end{remark}

\begin{example}
{\rm One may choose $T$ to be empty as 
the empty set clearly satisfies 
\eqref{Men}. This corresponds to Harnack patchworking mentioned
in the introduction.
The resulting curve corresponds to the
construction of  \emph{simple Harnack curves} described in \cite{Mik11}
via Harnack distribution of signs, see \cite{IV2}. Furthermore, the
isotopy type of $C_T$ in $(\R^\times)^2$, up to axial symmetries, only depends on the Newton
polygon of $C$, see Proposition \ref{prop: harnack signs}.}
\end{example}

 \begin{example}
{\rm Let us consider  the non-singular tropical  curve $C$ of degree 6 depicted on
Figure \ref{fig:patch sextic}.
We  equip $C$ with 
 three different twist-admissible collection of edges in Figures
 \ref{fig:patch sextic1}, \ref{fig:patch gudkov}, and \ref{fig:patch hilbert}.
In each case we depict 
 the isotopy types of the 
real part of the corresponding 
real algebraic curve in both $(\RR^\times)^2$ and $\RP^2$. Note that
these are the only isotopy types of maximal real sextics in $\RP^2$.}
\begin{figure}[h]
\centering
\begin{tabular}{c}
\includegraphics[height=5cm, angle=0]{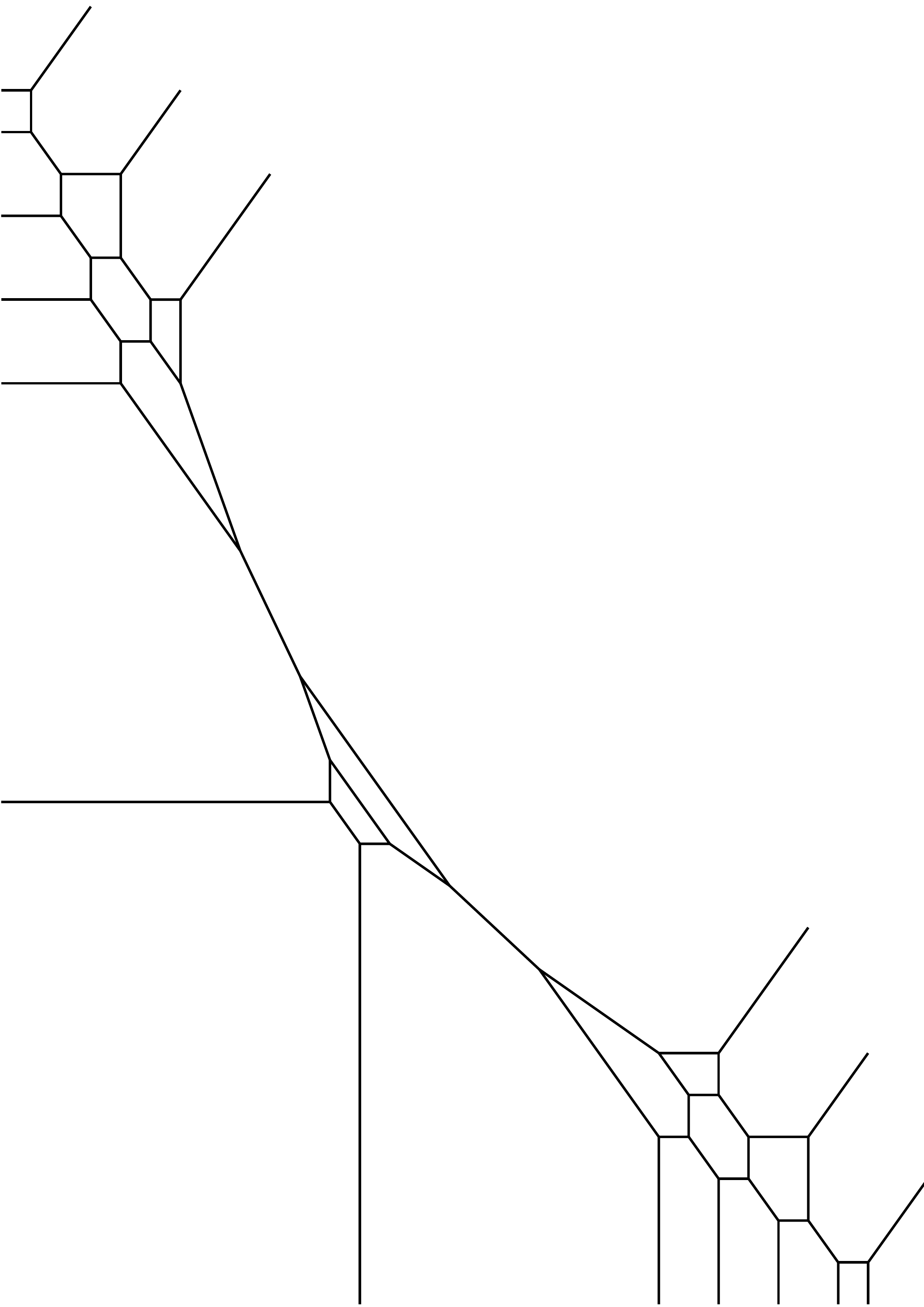}
\end{tabular}
\caption{A tropical sextic}\label{fig:patch sextic}
\end{figure}

\begin{figure}[h]
\centering
\begin{tabular}{cccc}
\includegraphics[height=7cm, angle=0]{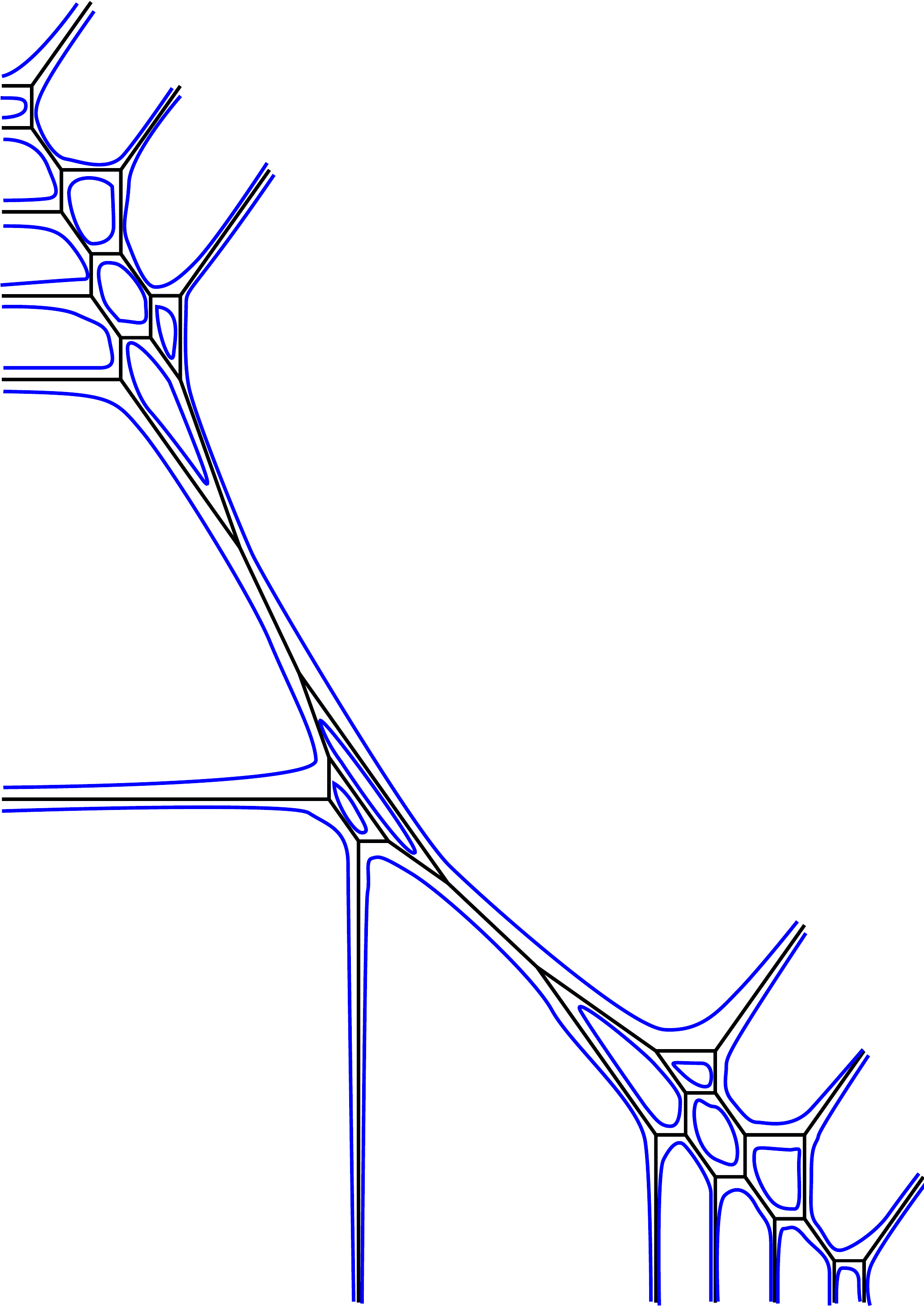}& 
\includegraphics[height=7cm, angle=0]{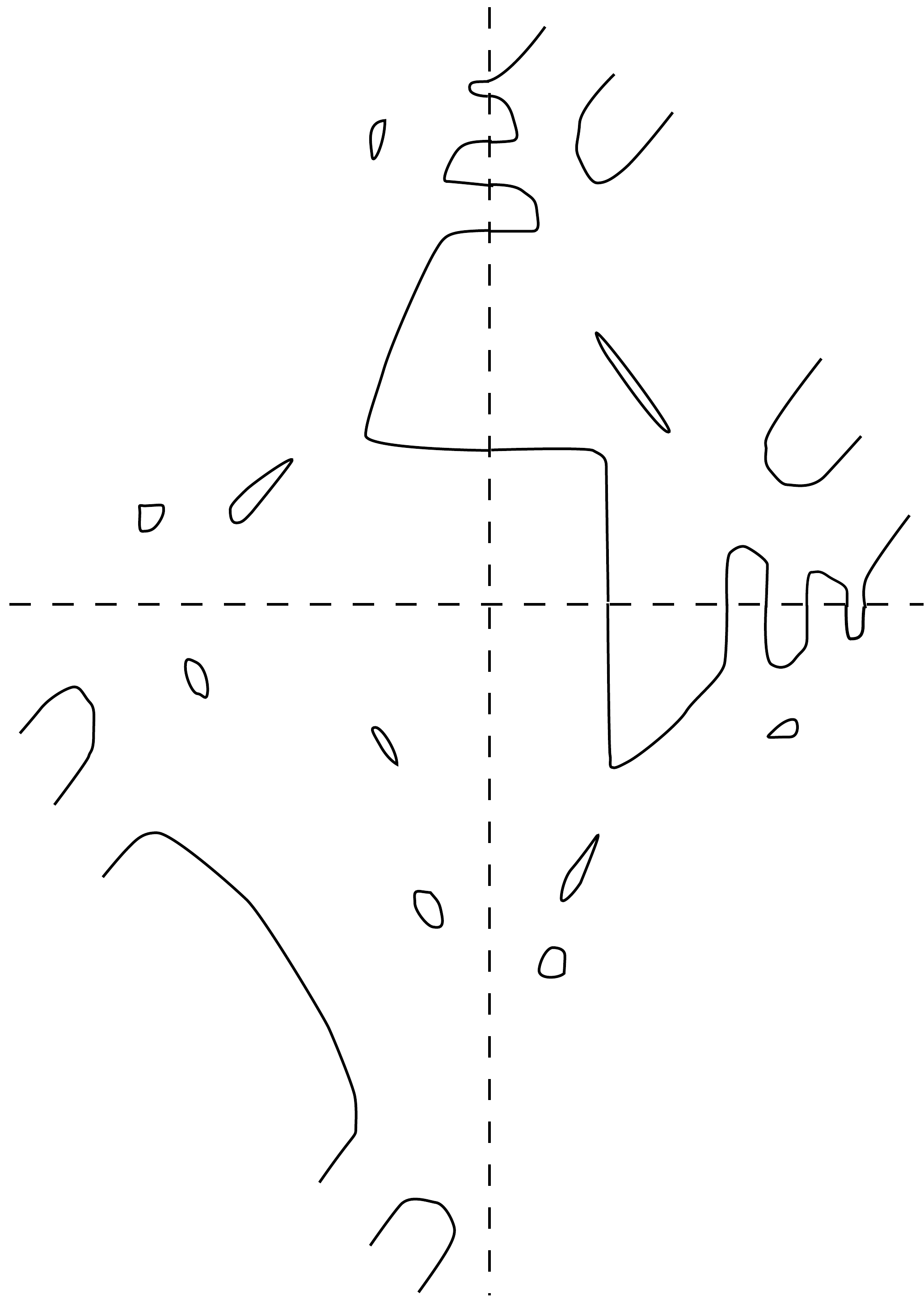}& 
\includegraphics[height=7cm, angle=0]{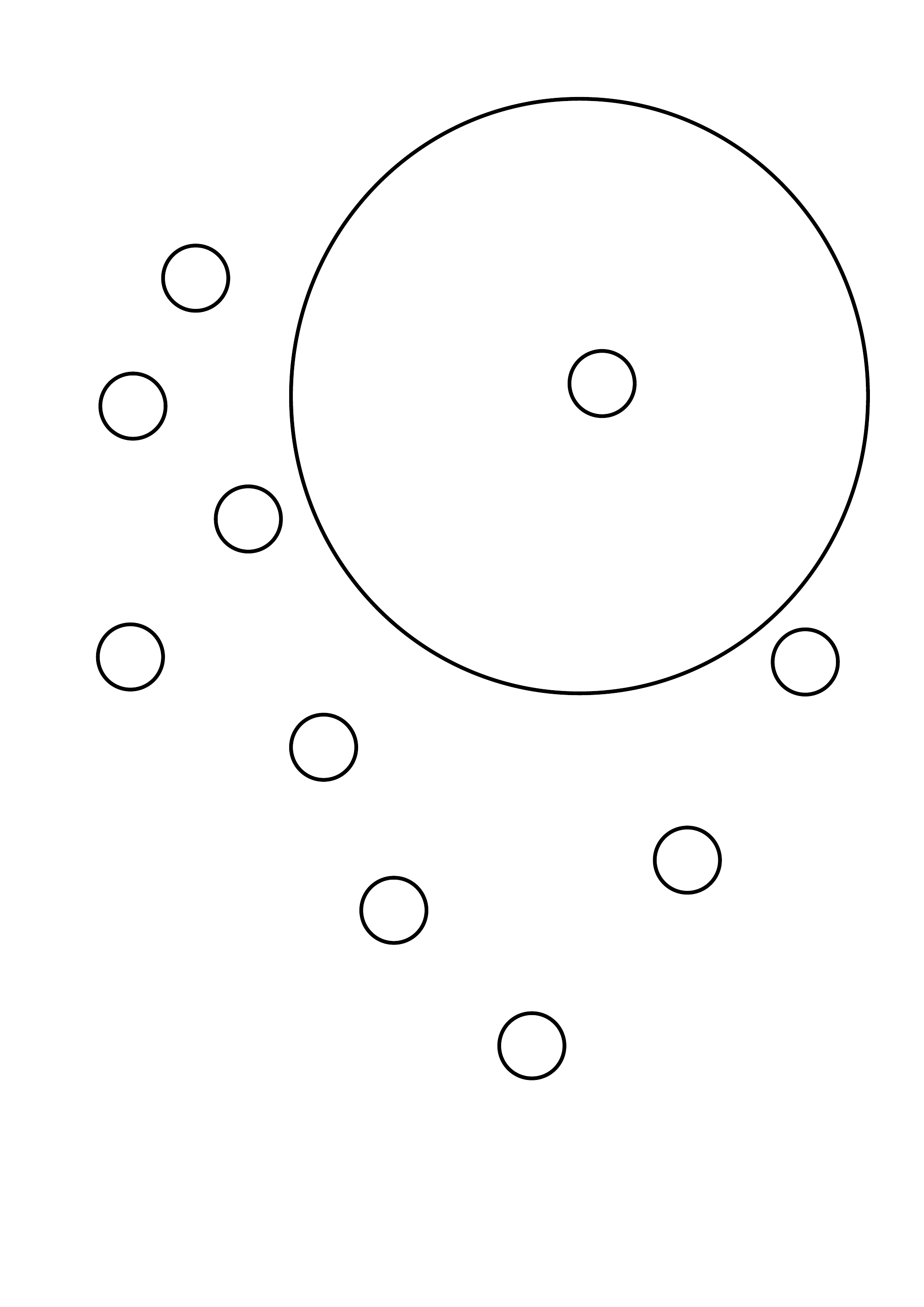}

\end{tabular}
\caption{Simple Harnack sextic}\label{fig:patch sextic1}
\end{figure}

\begin{figure}[h]
\centering
\begin{tabular}{ccc}
\includegraphics[height=7cm, angle=0]{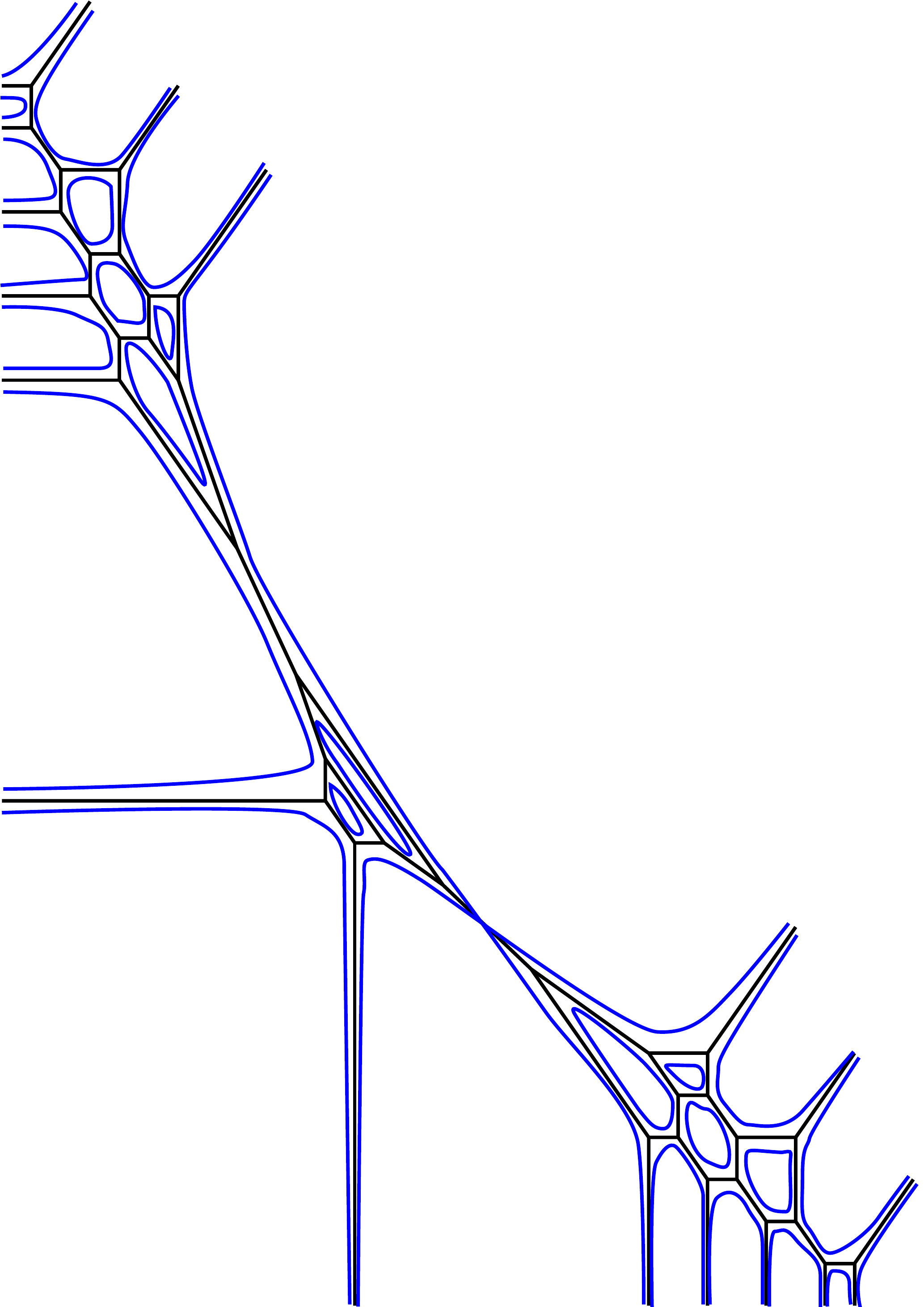}  &
\includegraphics[height=7cm, angle=0]{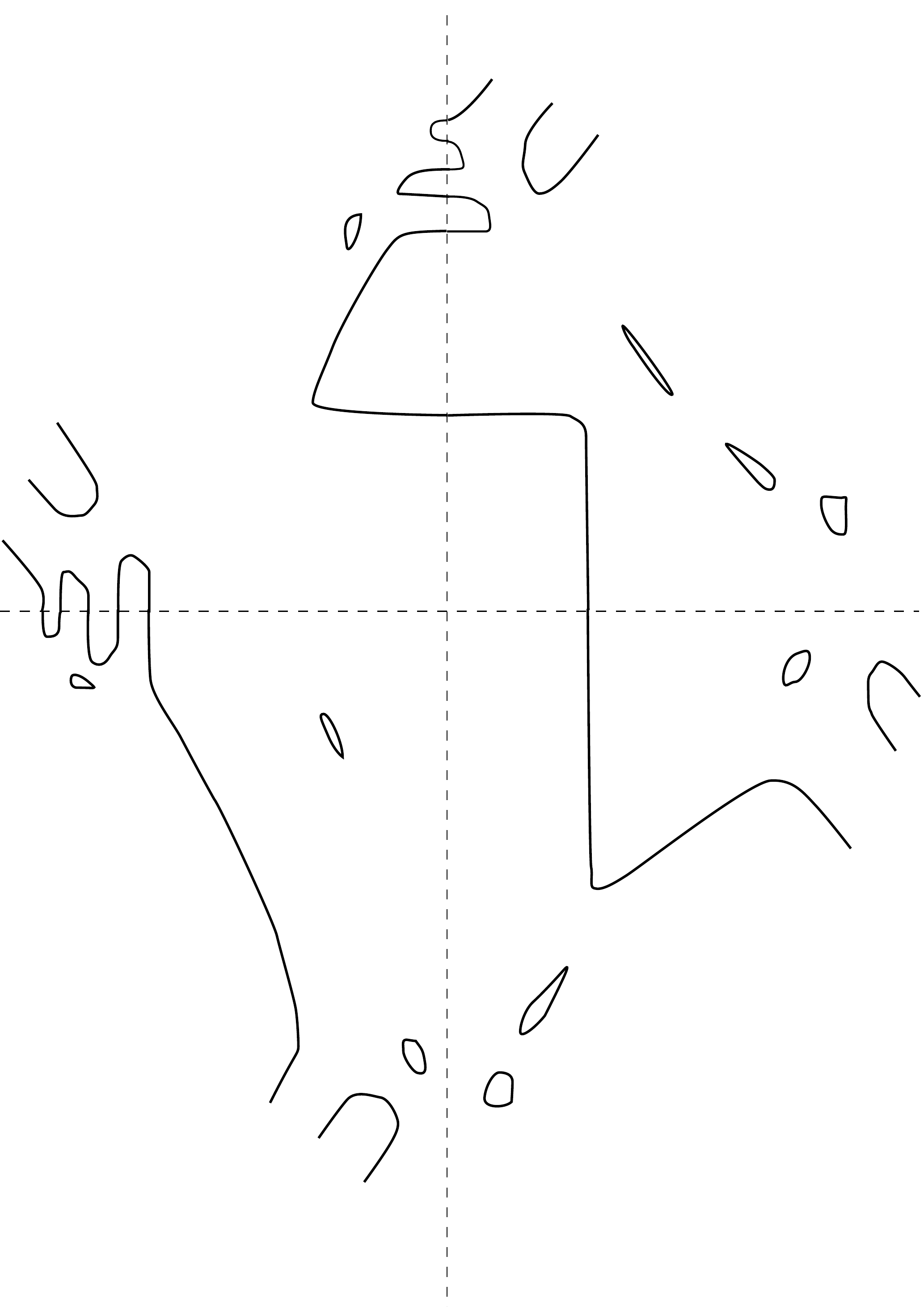}  &
\includegraphics[height=7cm, angle=0]{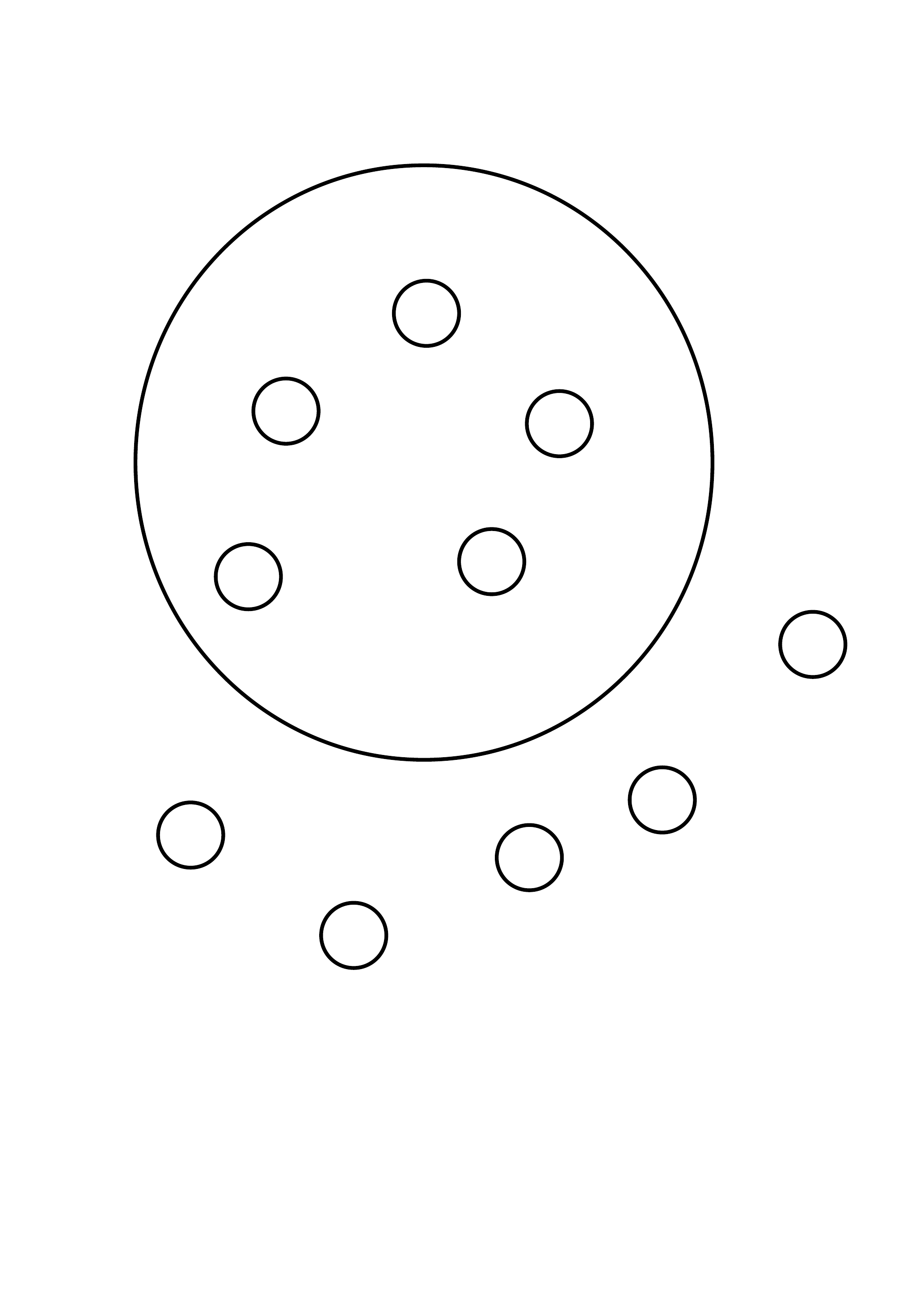}

\end{tabular}
\caption{Gudkov's sextic}\label{fig:patch gudkov}
\end{figure}

\begin{figure}[h]
\centering
\begin{tabular}{ccc}
\includegraphics[height=7cm, angle=0]{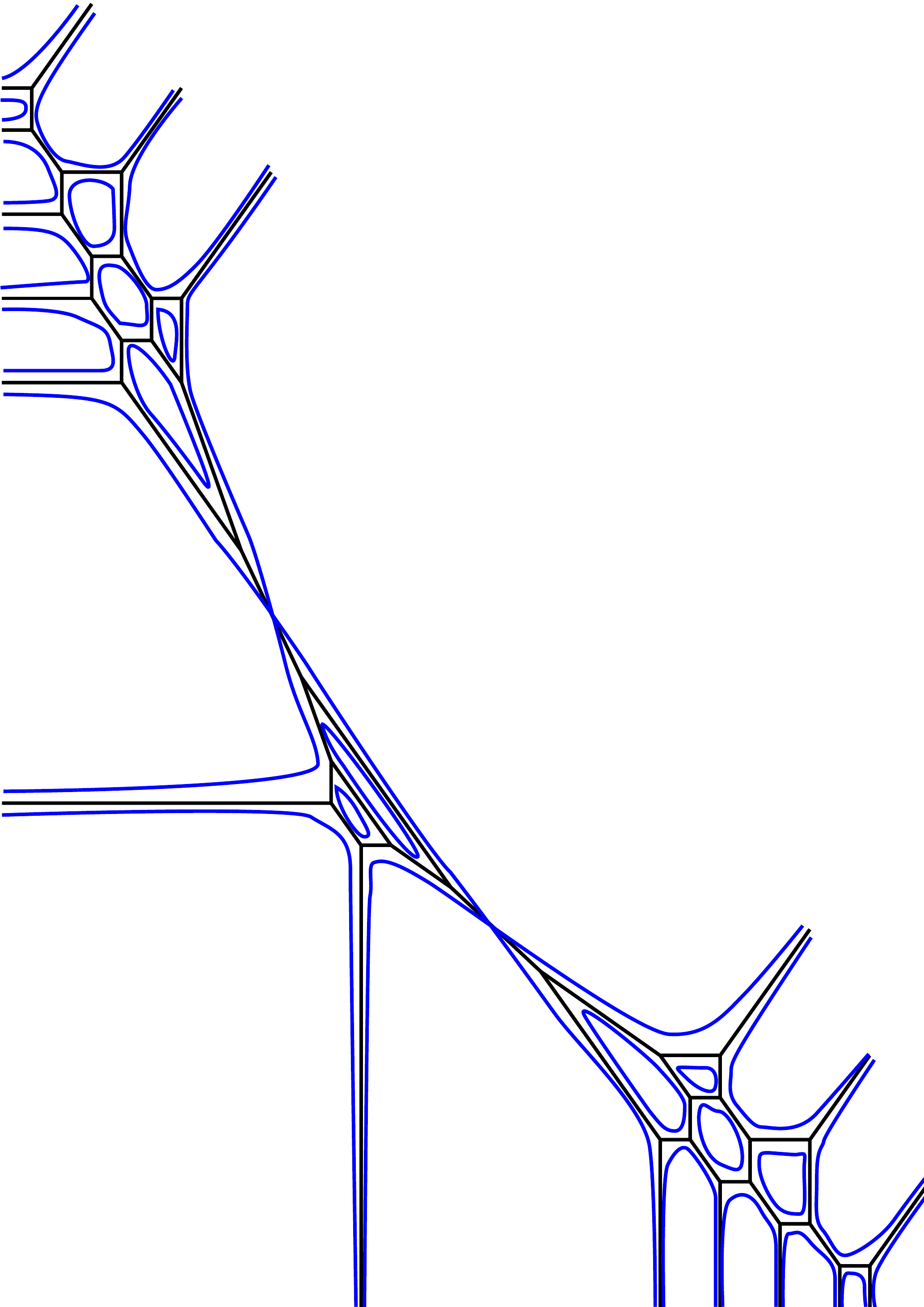}  &
\includegraphics[height=7cm, angle=0]{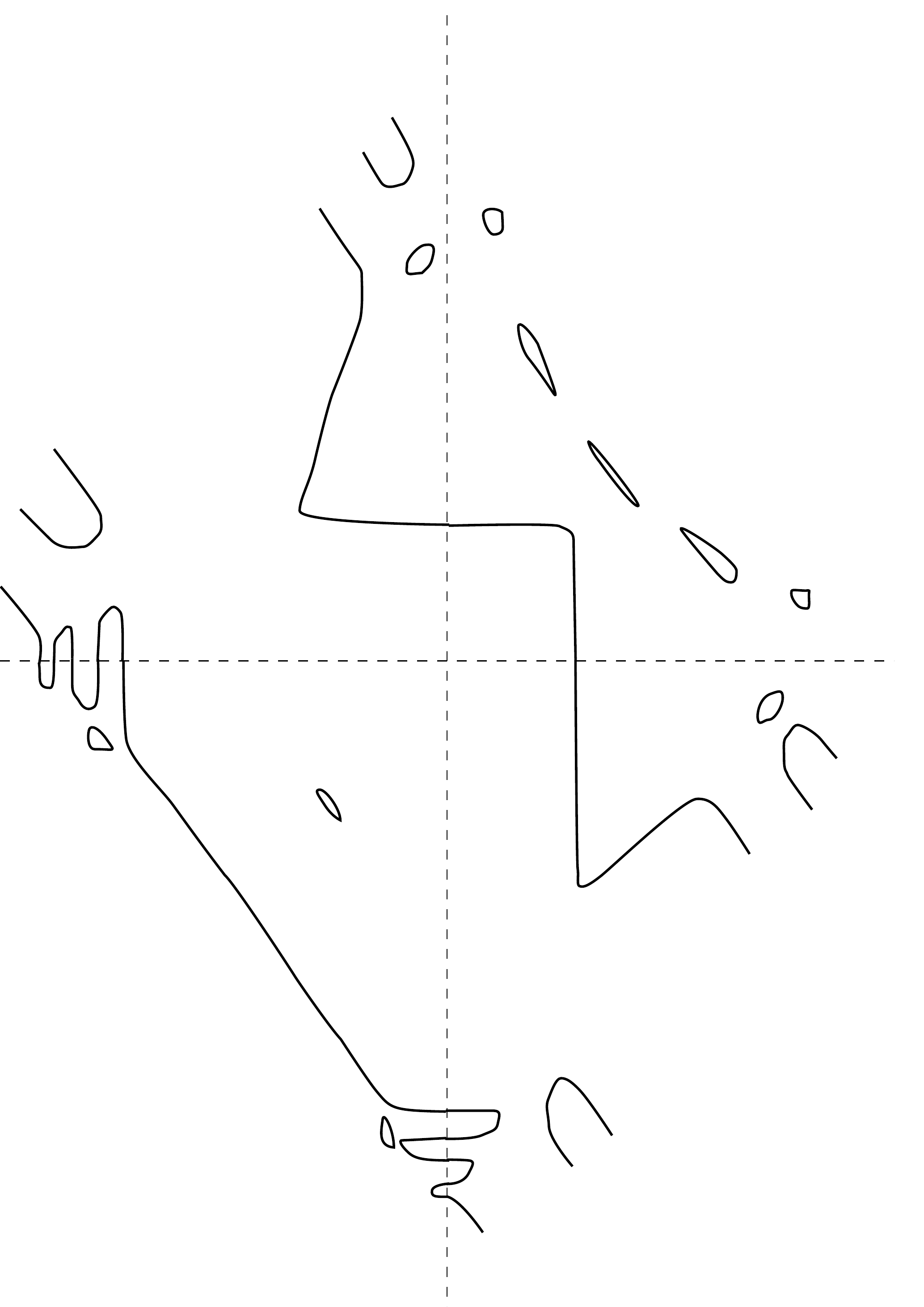} &
\includegraphics[height=7cm, angle=0]{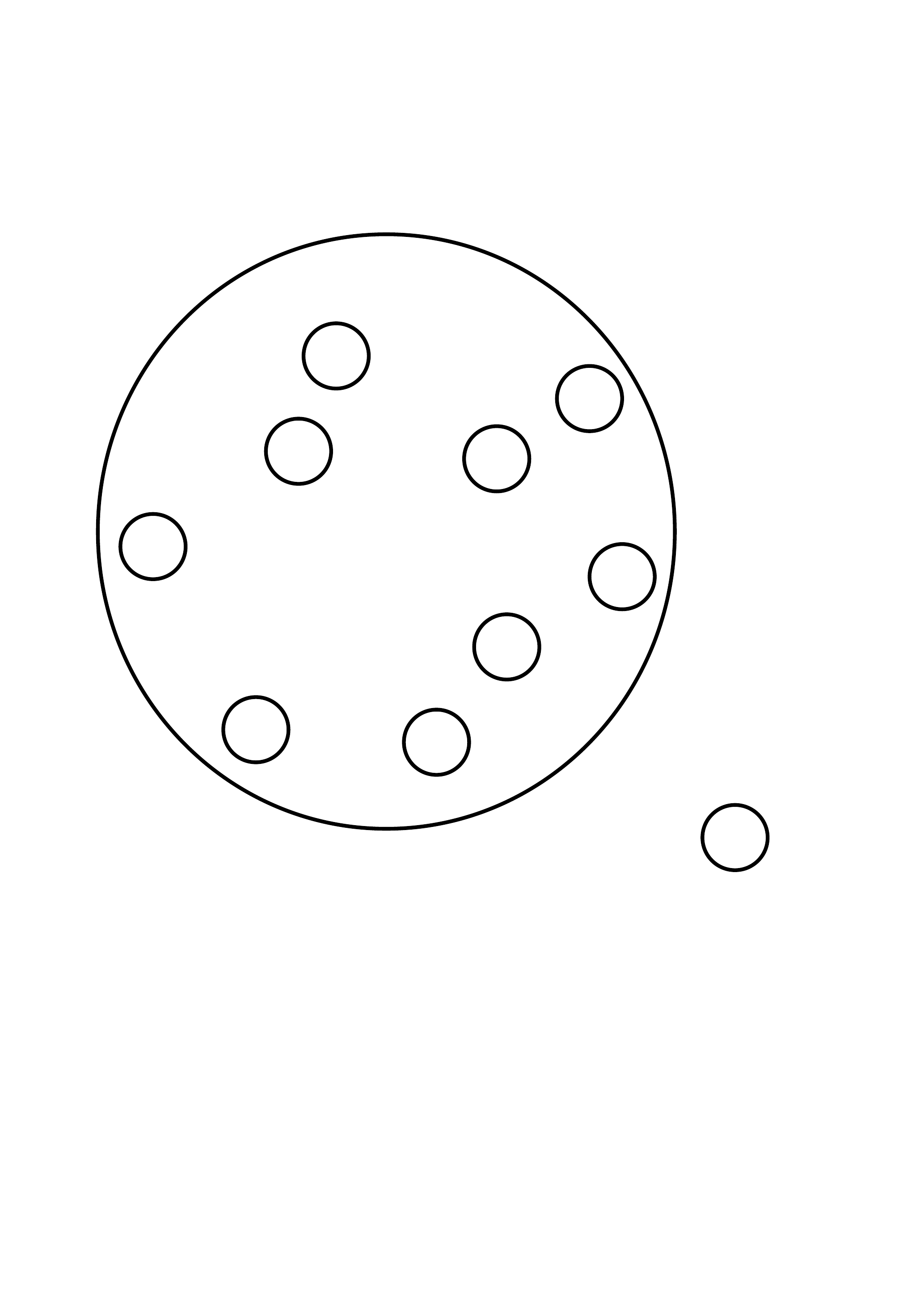}

\end{tabular}
\caption{Hilbert's sextic}\label{fig:patch hilbert}
\end{figure}
\end{example}

Given a non-singular tropical curve $C$, 
Haas' classified in \cite{Haa2} 
all twist-admissible sets
producing an $M$-curve.
At
that time the formalism of tropical geometry did not exist yet, and the original
formulation of Haas' theorem is dual to the one we present here in
Theorem \ref{thm:haas}.

We say that a twist-admissible set of edges $T$ of $C$
is \emph{maximal} if it satisfies the two following
conditions:
\begin{enumerate}
\item any cycle in $H_1(C;\Z/2\Z)$ contains an even number of edges in
  $T$;
\item  for any
edge $e\in T$, either $C\setminus e$ is disconnected, or there exists
an edge $e'\in T$ such that $C\setminus e$ and $C\setminus e'$ are
connected, but
 $C\setminus (e\cup e')$ is disconnected.
\end{enumerate}

\begin{theorem}[Haas]\label{thm:haas}
Let $C$  be a 
non-singular 
tropical curve in $\RR^2$, and let $T$ be a
twist-admissible 
set of edges of $C$.
Then 
a real algebraic curve whose existence is asserted by Theorem \ref{thm:viro}
 is maximal
 if and only if
$T$ is maximal.
\end{theorem}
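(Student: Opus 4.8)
The plan is to transport the statement into the abstract framework of Sections~\ref{sec:real patch} and~\ref{sec:action} and then read it off from Corollary~\ref{cor:patch max} and Proposition~\ref{prop:equiv W}. First I would identify the non-singular tropical curve $C$ with its underlying trivalent graph $\Gamma$, equipped with the trivial real structure $\tau=\Id$ (no symmetry is imposed on $C$, and $C\subset\R^2$ is patchworked to a genuine real curve). The crucial dictionary is that, for a twist-admissible set $T$, the real algebraic curve produced by Theorem~\ref{thm:viro} has complexification homeomorphic to $S_\Gamma$, carrying complex conjugation as a real structure $\tau_{\Gamma,T}$ above $(\Gamma,\Id)$. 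Comparing the two gluing patterns of Figure~\ref{fig:patch gen}b and~c, toggling the membership of a single edge $e\in\Ed^0(C)$ changes this real structure precisely by the half Dehn twist of Lemma~\ref{lem:2 patch} along $\gamma_e$; hence in $\overrightarrow{\Pi_{(\Gamma,\Id)}}$ one has $\tau_{\Gamma,T}-\tau_{\Gamma,\emptyset}=\sum_{e\in T}e$, the empty set corresponding to the Harnack patchworking.

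Next I would dispose of the base case. The Harnack patchworking $T=\emptyset$ produces a simple Harnack curve, which is maximal (see~\cite{Mik11,IV2}); equivalently $(S_\Gamma,\tau_{\Gamma,\emptyset})$ is a maximal real topological surface. Since for $\tau=\Id$ every lift has non-empty real part, Corollary~\ref{cor:patch max} applies with $\tau_{\Gamma,1}=\tau_{\Gamma,\emptyset}$ and $\tau_{\Gamma,2}=\tau_{\Gamma,T}$, and yields that $C_T$ is an $M$-curve if and only if $\tau_{\Gamma,T}-\tau_{\Gamma,\emptyset}\in W_{(\Gamma,\Id)}$, that is, if and only if $T\in W_\Gamma$ once $T$ is viewed as the vector $\sum_{e\in T}e$.

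It then remains to check the purely combinatorial equivalence: a twist-admissible $T$ lies in $W_\Gamma$ if and only if it is maximal in the sense of conditions~(1) and~(2) preceding the theorem. This I would deduce directly from Proposition~\ref{prop:equiv W} by unwinding definitions. Condition~(1), that every cycle meets $T$ in an even number of edges, is exactly the vanishing $\langle T,\mu(\gamma,\gamma)\rangle=|\gamma\cap T|=0$ in $\Z/2\Z$ coming from the defining relations of $W_\Gamma$ (with $\tau=\Id$ one has $\mu(\gamma,\gamma)=\sum_{e\in\gamma}e$); and condition~(2) is the translation of the structural description $W_\Gamma=\big(\bigoplus_{e\in\Ed^{0,(1)}(\Gamma)}\Z/2\Z e\big)\oplus\mathrm{Span}\{e+e'\mid\{e,e'\}\in\Ed^{0,(2)}(\Gamma)\}$, since an element of this space is supported on disconnecting edges together with pairs of edges that are forced to appear simultaneously. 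Here I would use that $\{e,e'\}\in\Ed^{0,(2)}(\Gamma)$ precisely when $e$ and $e'$ are distinct non-disconnecting edges lying in the same cycles (one direction is the claim of Step~2 of the proposition, the converse following because a connected $\Gamma\setminus\{e,e'\}$ would produce a cycle through $e$ but not $e'$). Conversely, condition~(2) places the support of $T$ inside the subspace $U$ of that proof, and condition~(1) together with the parity argument of its Step~4 forces $T\in W_\Gamma$.

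I expect the main obstacle to be the dictionary of the first step: relating Viro's combinatorial patchworking — the arcs, sign distribution, and gluings of Figure~\ref{fig:patch gen} — to the pair-of-pants decomposition of $S_\Gamma$ and to the explicit real structures above $(\Gamma,\Id)$, and in particular matching the edge twist of Figure~\ref{fig:patch gen}c with the half Dehn twist of Lemma~\ref{lem:2 patch}. Once this identification and the maximality of the Harnack curve are granted, Corollary~\ref{cor:patch max} and Proposition~\ref{prop:equiv W} make the rest essentially formal. The combinatorial matching of the third step is routine but must be handled with care regarding cancellations in $\Z/2\Z$, which is precisely why condition~(1) cannot be dropped.
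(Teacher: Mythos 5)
Your plan is the same as the paper's: compactify $C$ into a trivalent graph $\Gamma$ with $\tau=\Id$, identify the complexification of the patchworked curve with $S_\Gamma$, record the twist set as the vector $\tau_{\Gamma,T}-\tau_{\Gamma,\emptyset}=\sum_{e\in T}e$ in $\overrightarrow{\Pi_{(\Gamma,\Id)}}$, use the maximality of the Harnack patchworking as base point, and conclude by Corollary~\ref{cor:patch max} and Proposition~\ref{prop:equiv W}. The problem is that the step you defer as ``the main obstacle'' is precisely where the paper's proof lives, and as written it is a genuine gap rather than a routine verification. Theorem~\ref{thm:viro} only asserts the existence of \emph{some} real curve whose real part in $(\R^\times)^2$ is isotopic to $C_T$; it gives no handle on the complexification nor on how conjugation acts on it, so the assertion that ``the real algebraic curve produced by Theorem~\ref{thm:viro} has complexification homeomorphic to $S_\Gamma$, carrying complex conjugation as a real structure above $(\Gamma,\Id)$'' has, strictly speaking, neither a referent nor a proof. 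The paper supplies both by working with the explicit families $P_t$ (Harnack signs, Proposition~\ref{prop: harnack signs}) and $R_t$ (arbitrary signs $\nu$), invoking Mikhalkin's localization theorem (Theorem~\ref{thm:patch mikh}) to obtain limit surfaces $V^0_\infty$ and $V_\infty$ fibered by $\Log$ over $C$ into pairs of pants and cylinders, constructing a homeomorphism $\zeta:V_\infty\to V^0_\infty$ which is real over every vertex via monomial sign changes $s_v$, and identifying the set of edges over which $\zeta$ fails to be real with the twist set $T$ using \cite[Theorem 3.4 and Remark 3.9]{BIMS15}; only then does Lemma~\ref{lem:2 patch} yield $\tau_{\Gamma,2}-\tau_{\Gamma,1}=\sum_{e\in T}e$. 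One also needs the closing observation that every twist-admissible $T$ is realized by some sign function $\nu$, so that the equivalence indeed covers all curves asserted by Theorem~\ref{thm:viro}.

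Two further remarks on your combinatorial step. Your justification of the direction ``$T$ maximal $\Rightarrow T\in W_\Gamma$'' by ``condition~(1) together with the parity argument of Step~4'' is imprecise: condition~(1) only encodes the diagonal relations $\langle T,\mu(\gamma,\gamma)\rangle=0$, whereas Step~4 of the proof of Proposition~\ref{prop:equiv W} exploits the full bilinear relations $\langle w,\mu(\alpha,\beta)\rangle=0$ for auxiliary cycles such as $u\cup c_i$ and the pair $\beta_1,\beta_2$; over $\Z/2\Z$ the diagonal of a symmetric bilinear form does not determine it, so this direction needs its own short argument (via the description of $W_\Gamma$ as meeting each $\Ed^{0,(2)}$-equivalence class of non-disconnecting edges in an even number of edges). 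To be fair, the paper is equally terse at this point. Your identification of $\Ed^{0,(2)}(\Gamma)$ with pairs of distinct non-disconnecting edges lying in exactly the same cycles is correct, and the remaining ingredients (maximality of the simple Harnack curve, non-emptiness of the real part so that Corollary~\ref{cor:patch max} applies) match the paper.
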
 

\begin{example} 
{\rm The empty collection 
is  maximal, hence we 
 recover the existence, for each non-singular tropical curve, of a canonical maximal patchworking mentioned in the introduction.}
\end{example}

\subsection{Proof of  Haas' Theorem}\label{sec:haas}

Here we deduce Theorem~\ref{thm:haas} from Theorem~\ref{thm:main} and
Proposition~\ref{prop:equiv W}. First, we  introduce some standard
notations.

The coordinatewise argument and
$\log$ 
maps are defined by:
$$\begin{array}{cccc}
\Arg: & (\C^\times)^2&\longrightarrow &(\R/2\pi\Z)^2
\\ & (z,w)&\longmapsto & \left(\text{arg}(z),\text{arg}(w)  \right)
\end{array}\quad \mbox{and}\quad
\begin{array}{cccc}
\Log: & (\C^\times)^2&\longrightarrow &\R^2
\\ & (z,w)&\longmapsto &
\left(\log|z|,\log|w|  \right)
\end{array}.$$
Note that the image of the map $\Arg$ is canonically identified with
any fiber of the map $\Log$.
We also define the following self-diffeomorphism of $(\C^\times)^2$
 fot $t>1$ 
$$\begin{array}{cccc}
H_t: & (\C^\times)^2&\longrightarrow &(\C^\times)^2
\\ & (z,w)&\longmapsto & \displaystyle \left(|z|^{\frac{1}{\log(t)}}\frac{z}{|z|}, |w|^{\frac{1}{\log(t)}}\frac{w}{|w|}\right)
\end{array}.$$
Given  a complex polynomial 
$P(z,w)=\sum a_{i,j}z^iw^j$ and $\Delta\subset\R^2$, we define
$$P^\Delta(z,w)=\sum_{(i,j)\in\Delta} a_{i,j}z^iw^j.$$
The \emph{(closed) coamoeba} of the algebraic curve $X$
with equation $P(z,w)=0$, denoted by
$\mathcal C\A(P)$, is defined
as the topological closure in $(\R/2\pi\Z)^2$ of the set 
$\Arg(X)$.

\begin{example}\label{exa:binom}
{\rm If $P(z,w)$ is a real binomial whose Newton 
 segment $\Delta(P)$ has integer length 
1, then the real part of the real algebraic curve defined by $P(z,w)$
intersects two quadrants $Q_1$ and $Q_2$ of $(\R^\times)^2$. If $u$ denotes
a primitive 
integer vector normal to  $\Delta(P)$, then
the coamoeba $\mathcal C\A(P)$ is the geodesic in $(\R/2\pi\Z)^2$ with
direction $u$ and passing through $\Arg(Q_1)$ and $\Arg(Q_2)$. On Figure~\ref{fig:coamoeba}a we depicted the coamoeba of a line given by the equation $z+aw=0$ with $a>0$. It joins the points $(0,\pi)$ and $(\pi,0)$.}
\end{example}

\begin{figure}[h!]
\begin{center}
\begin{tabular}{ccc}
\includegraphics[width=5cm, angle=0]{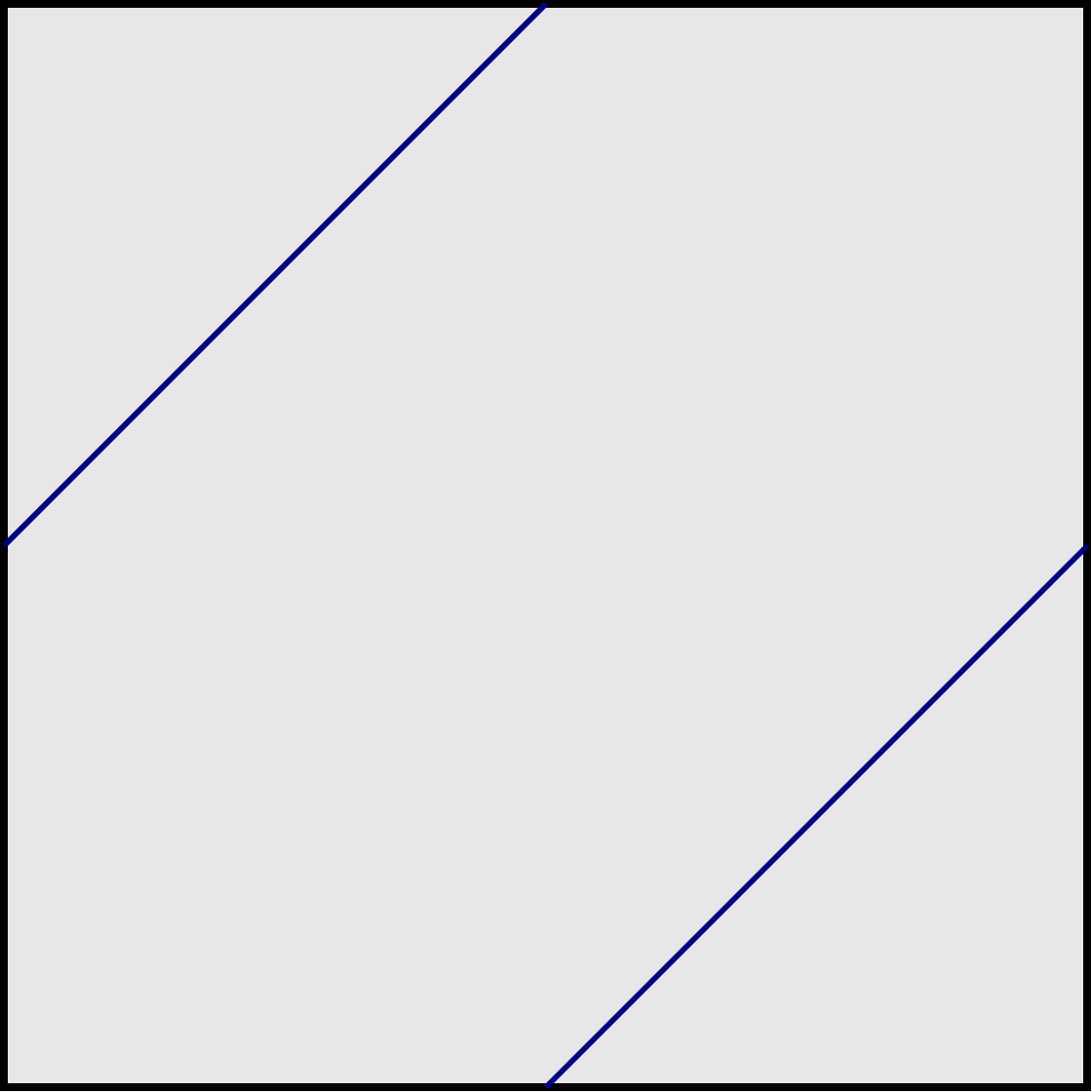}
\put(-85,-10){$(\pi,0)$}
\put(-155,-10){$(0,0)$}
\put(-170,70){$(0,\pi)$}
\put(-85,148){$(\pi,0)$}
\put(0,70){$(0,\pi)$}
& \hspace{15ex}
&\includegraphics[width=5cm, angle=0]{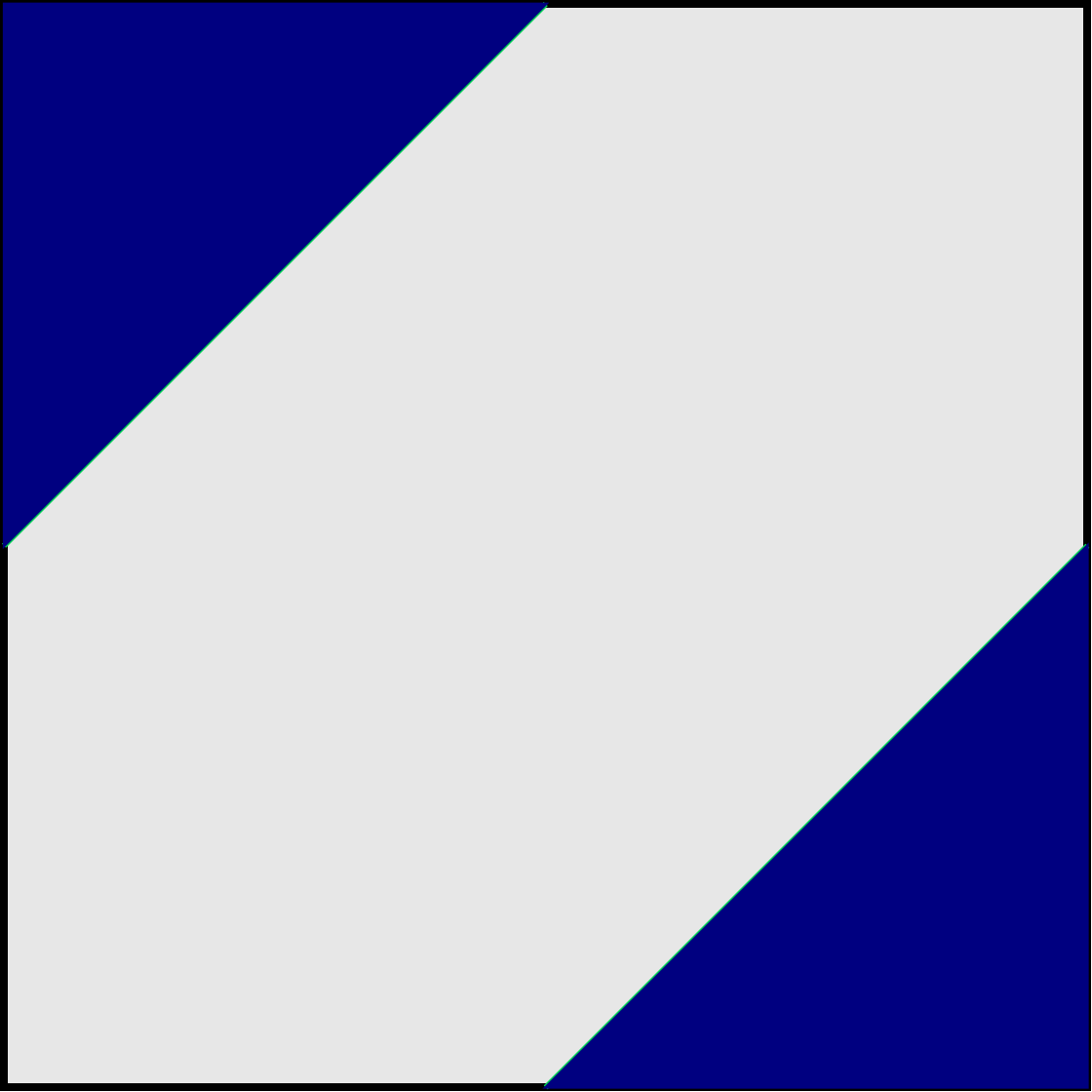}
\put(-85,-10){$(\pi,0)$}
\put(-155,-10){$(0,0)$}
\put(-170,70){$(0,\pi)$}
\put(-85,148){$(\pi,0)$}
\put(0,70){$(0,\pi)$}
\\
\\ a) Coamoeba of a real line
&&  b)  Coamoeba of a real line 
\\defined by $z+aw$ with $a>0$
&& defined by $z+aw+b$ with $a>0$ and $b<0$
\end{tabular}
\end{center}
\caption{} 
\label{fig:coamoeba}
\end{figure}

\begin{example}[See {\cite[Proposition 6.11 and Lemma 8.19]{Mik1}}]\label{exa:trinom}
{\rm  If $P(z,w)$ is a real trinomial whose Newton triangle $\Delta(P)$  has Euclidean area 
$\frac{1}{2}$, then  the real part of the real algebraic curve defined
by $P(z,w)$ 
intersects three quadrants $Q_1,Q_2$, and $Q_3$ of $(\R^\times)^2$. 
If $u_1, u_2,$ and $u_3$ denote
 primitive 
integer vectors normal to the three edges of  $\Delta(P)$, 
then
the coamoeba $\mathcal C\A(P)$ is the union of the two triangles
with vertices  $\Arg(Q_1)$, $\Arg(Q_2)$,  and
 $\Arg(Q_3)$, and
whose sides are geodesics with direction $u_1$, $u_2$, and $u_3$.
In particular  $\mathcal C\A(P)$ is a (degenerate) pair of pants.
On Figure~\ref{fig:coamoeba}b we represented the coamoeba of a line given by the equation  $z+aw+b=0$ with $a>0$ and $b<0$.}
\end{example}

Recall (see for example \cite[Section 2.2]{BIMS15}) that to
any tropical curve $C$ in $\R^2$ is associated a dual subdivision of
its Newton polygon. We denote respectively by $\Delta_v$ and
$\Delta_e$ the polygon dual to 
the vertex $v$ and the
edge $e$ of $C$. 
Next statement is proved in  \cite[Section 6]{Mik1}.
\begin{theorem}[Mikhalkin]\label{thm:patch mikh}
Let $C$ be a
tropical curve in $\R^2$ defined by some
tropical polynomial 
$\tg \sum b_{i,j}x^iy^j \td $.
Then given a collection $a_{i,j}$ of
non-zero complex numbers, the image by $H_t$ of the algebraic curve in 
$(\C^\times)^2$ defined by the complex polynomial 
$$P_t(z,w)=\sum a_{i,j}t^{-b_{i,j}}z^iw^j $$
converges when $t\to+\infty$, for the Hausdorff metric on compact subsets of $(\C^\times)^2$, to a subset
$V_\infty$ that can be described as follows:
\begin{itemize}
\item $\Log(V_\infty)=C$;

\item for any vertex $v$ of $C$,
  we have $\Log^{-1}(v)\cap V_\infty=\mathcal C\A(P_1^{\Delta_v})$;

\item for each edge $e$ of $C$,

  we have $\Log^{-1}(e)\cap V_\infty=e\times \mathcal C\A(P_1^{\Delta_e})$ .
\end{itemize}
If furthermore all the $a_{i,j}$'s are real numbers, then the real part
of the real algebraic curve defined by $P_t(z,w)$ converges when
$t\to+\infty$, for the Hausdorff metric on
compact
subsets of $(\R^\times)^2$, to $V_\infty\cap (\R^\times)^2$.
\end{theorem}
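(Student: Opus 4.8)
The plan is to analyze the equation $P_t(z,w)=0$ after the rescaling $H_t$, exploiting that $H_t$ divides the $\Log$-coordinates by $\log(t)$ while leaving $\Arg$ unchanged. Let $X_t\subset(\C^\times)^2$ be the curve $\{P_t(z,w)=0\}$. Write $z=|z|\zeta$ and $w=|w|\omega$ with $|\zeta|=|\omega|=1$, and set $(u,v)=\frac{1}{\log t}\Log(z,w)$, so that $(u,v)$ is exactly the $\Log$-coordinate of $H_t(z,w)$ and $|z|=t^{u}$, $|w|=t^{v}$. Then each monomial of $P_t$ reads
$$
a_{i,j}t^{-b_{i,j}}z^iw^j=a_{i,j}\,t^{\langle(i,j),(u,v)\rangle-b_{i,j}}\,\zeta^i\omega^j,
$$
so as $t\to+\infty$ the vanishing of $P_t$ is governed by the lattice points $(i,j)$ maximizing the affine function $(i,j)\mapsto\langle(i,j),(u,v)\rangle-b_{i,j}$. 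Denoting this maximum by $M(u,v)$ and by $\Delta(u,v)\subset\Z^2$ the set of maximizers, the construction of the dual subdivision shows that $\Delta(u,v)$ is a single lattice point when $(u,v)\notin C$, consists of the lattice points of $\Delta_e$ when $(u,v)$ lies in the relative interior of an edge $e$, and of those of $\Delta_v$ when $(u,v)=v$ is a vertex.

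The analytic core is the convergence
$$
t^{-M(u,v)}P_t(z,w)\;\longrightarrow\;P_1^{\Delta(u,v)}(\zeta,\omega),
$$
uniform for $(\zeta,\omega)$ in the compact argument torus and for $(u,v)$ in any region on which $\Delta(u,v)$ is constant. From this I would first deduce that every limit point of $H_t(X_t)$ lies in $V_\infty$: if $P_{t_n}(z_n,w_n)=0$ and $H_{t_n}(z_n,w_n)\to(z_\infty,w_\infty)$, then dividing by $t_n^{M}$ and letting $n\to\infty$ forces $P_1^{\Delta(u_\infty,v_\infty)}(\zeta_\infty,\omega_\infty)=0$. When $(u_\infty,v_\infty)\notin C$ the initial form is a single nonzero monomial, which is impossible; hence the $\Log$-coordinate of any limit point lies in $C$. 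Over a vertex the equation says $(\zeta_\infty,\omega_\infty)$ projects into $\mathcal C\A(P_1^{\Delta_v})$, and over an interior edge point into $\mathcal C\A(P_1^{\Delta_e})$; Examples~\ref{exa:binom} and~\ref{exa:trinom} then identify these coamoebas with the geodesic and the (degenerate) pair of pants appearing in the statement, giving the three descriptions at the level of one inclusion.

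The reverse inclusion is where the real work lies, since Hausdorff convergence demands that \emph{every} point of $V_\infty$ be approximated by genuine solutions. I would fix $(\zeta_0,\omega_0)$ over $(u_0,v_0)$ with $P_1^{\Delta(u_0,v_0)}(\zeta_0,\omega_0)=0$ and produce nearby zeros of $P_t$ by a Rouch\'e/Hurwitz argument: restricting $t^{-M}P_t$ to a complex one-parameter family transverse to the coamoeba and applying continuity of roots to the convergence above, the limiting root $(\zeta_0,\omega_0)$ is exhibited as a limit of roots of $P_t$, hence of points of $X_t$ whose $H_t$-image tends to the chosen point. The main obstacle is to make this root-tracking uniform and to run it near a vertex, where $\Delta_v$ is two-dimensional and $P_1^{\Delta_v}$ cuts out a genuine curve rather than a single geodesic; one must also prevent zeros from escaping to the coordinate axes or to infinity, and control the passage between edge-regions and vertex-regions so that the fibrewise statements assemble into Hausdorff convergence on each compact set.

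Finally, the real assertion follows by repeating the analysis with all $a_{i,j}\in\R$. Complex conjugation $(z,w)\mapsto(\bar z,\bar w)$ commutes with $H_t$ and with the passage to initial forms (as each $P_1^{\Delta}$ is then real), so the fixed loci converge to the fixed locus of the limit, that is, the real part of $X_t$ converges to $V_\infty\cap(\R^\times)^2$.
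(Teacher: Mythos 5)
The paper does not prove this statement: it is quoted verbatim from Mikhalkin and the text only says ``Next statement is proved in [Mik05, Section 6]''. So there is no in-paper proof to compare against, and I can only assess your sketch on its own terms. Its overall strategy (rescale by $H_t$, identify the dominant monomials via the dual subdivision, pass to initial forms, prove two inclusions) is the right one and is essentially Mikhalkin's.

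There is, however, a genuine structural gap. By writing $z=|z|\zeta$ with $|\zeta|=1$ and passing to the limit equation $P_1^{\Delta(u,v)}(\zeta_\infty,\omega_\infty)=0$ \emph{on the unit torus}, you discard exactly the degrees of freedom that produce the coamoebas. If $H_{t_n}(z_n,w_n)$ converges to a point of $\Log^{-1}(v)$ for a vertex $v=(v_1,v_2)$, the constraint is only $\log|z_n|/\log t_n\to v_1$, so $|z_n|=\rho_n t_n^{v_1}$ with $\rho_n$ sub-exponential but otherwise arbitrary; after dividing by the dominant power of $t_n$ the surviving equation is $P_1^{\Delta_v}(\rho\,\zeta_\infty,\sigma\,\omega_\infty)=0$ for some $(\rho,\sigma)\in(0,+\infty)^2$, and it is by letting $(\rho,\sigma)$ range over all of $(0,+\infty)^2$ that one obtains the full coamoeba $\mathcal C\A\bigl(P_1^{\Delta_v}\bigr)$, i.e. the $\Arg$-image of the whole zero set of the trinomial in $(\C^\times)^2$. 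The arguments of its zeros lying on the unit torus form a generically finite set (a complex curve meets the $2$-torus in real dimension $0$), whereas the coamoeba over a vertex is $2$-dimensional: in your normalisation the reverse inclusion over a vertex is not merely ``where the real work lies'', it is false. The same issue occurs over edges: the unit-torus zero set of the binomial $P_1^{\Delta_e}$ is empty unless its two coefficients have equal modulus, and the geodesic only appears once the relative scale of the two dominant monomials (governed by how $(u_n,v_n)$ approaches $e$, or equivalently by these bounded moduli) is allowed to float. The repair is to carry $(\rho,\sigma)$ along, i.e. to treat the truncations as polynomials on all of $(\C^\times)^2$ and only project by $\Arg$ at the end; with that in place your Rouch\'e/Hurwitz scheme for the reverse inclusion becomes viable, including the gluing of edge and vertex regions you rightly worry about.

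A secondary gap: for the real assertion, Hausdorff convergence of conjugation-invariant sets does not imply convergence of their real parts. The inclusion of limit points of real zeros into $V_\infty\cap(\R^\times)^2$ is automatic, but you must still show that every point of $V_\infty\cap(\R^\times)^2$ is a limit of \emph{real} zeros of $P_t$, e.g. by running the root-tracking argument over $\R$ using that the real truncations $P_1^{\Delta_v}$, $P_1^{\Delta_e}$ have nonsingular real zero sets; ``conjugation commutes with $H_t$'' alone does not give this.
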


Note that one can recover Theorem \ref{thm:viro} by combining
 Theorem
\ref{thm:patch mikh} together with Examples  \ref{exa:binom} and \ref{exa:trinom}.
In particular, an equation of a simple Harnack curve is given by next
proposition.
We define the function $\epsilon:\Z^2\to \{\pm1\}$ by $\epsilon(i,j)=1$
  if both $i$ and $j$ are even, and by $\epsilon(i,j)=-1$ otherwise.
Recall that the Viro's patchworking construction and the definition of
a twist admissible
set of edges of a non-singular tropical curve are given in Section \ref{sec:viro}.
\begin{proposition}[Itenberg, see \cite{IV2} or {\cite[Remark
        3.9]{BIMS15}}]\label{prop: harnack signs}
Let $C$ be a non-singular 
tropical curve in $\R^2$ defined by some
tropical polynomial 
$\tg \sum b_{i,j}x^iy^j \td $, and let 
$$P_t(z,w)=\sum  \epsilon(i,j)t^{-b_{i,j}}z^iw^j, \quad
V_\infty=\lim_{t\to+\infty}H_t\left(\{P_t=0\}\right).
$$
Then  up to axial symmetries, the real part of $V_\infty$ 
is isotopic in $(\R^\times)^2$  to the curve $C_\emptyset$ constructed out
of $C$ and the empty twist admissible set. 
In particular, the real algebraic curve defined by $P_t(z,w)$ with $t$
large enough is maximal. 
 Furthermore, the
isotopy type of its real part  in $(\R^\times)^2$, up to axial symmetries, only depends on the Newton
polygon of $C$.
\end{proposition}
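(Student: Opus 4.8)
The plan is to read off the real part of $V_\infty$ directly from Mikhalkin's description in Theorem~\ref{thm:patch mikh}, and then to verify that for Itenberg's signs $\epsilon$ it coincides, arc by arc, with the curve $C_\emptyset$ built in Section~\ref{sec:viro}. The starting observation is that a point of $(\C^\times)^2$ is real exactly when its argument lies in the four points $\{0,\pi\}^2\subset(\R/2\pi\Z)^2$, and that these are precisely $\Arg(Q_1),\ldots,\Arg(Q_4)$, where $Q_1,\ldots,Q_4$ denote the four quadrants of $(\R^\times)^2$. Hence, by Theorem~\ref{thm:patch mikh}, the real part $V_\infty\cap(\R^\times)^2$ projects by $\Log$ onto $C$, with its part over the interior of an edge $e$ equal to $e\times\bigl(\mathcal C\A(P_1^{\Delta_e})\cap\{0,\pi\}^2\bigr)$ and its part over a vertex $v$ equal to $\{v\}\times\bigl(\mathcal C\A(P_1^{\Delta_v})\cap\{0,\pi\}^2\bigr)$.

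Since $C$ is non-singular, each $\Delta_e$ is a primitive segment and each $\Delta_v$ a unimodular triangle, so $P_1^{\Delta_e}$ is a real binomial and $P_1^{\Delta_v}$ a real trinomial, and I would invoke Examples~\ref{exa:binom} and~\ref{exa:trinom}. By Example~\ref{exa:binom}, the coamoeba of $P_1^{\Delta_e}$ is a geodesic of direction $(x_e,y_e)$ meeting $\{0,\pi\}^2$ in exactly two points $\Arg(Q)$ and $\Arg(Q')$ with $Q'=((-1)^{x_e},(-1)^{y_e})\cdot Q$; this reproduces the two arcs attached to $e$, connected in the untwisted fashion of Figure~\ref{fig:patch gen}b, and the relation between $Q$ and $Q'$ is exactly the sign propagation of step~(d). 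By Example~\ref{exa:trinom}, the coamoeba of $P_1^{\Delta_v}$ is a degenerate pair of pants meeting $\{0,\pi\}^2$ in three points $\Arg(Q_1),\Arg(Q_2),\Arg(Q_3)$, which are the endpoints of the three arcs of Figure~\ref{fig:patch gen}a at $v$. The particular quadrants selected are governed by the signs of the coefficients, that is, by the values of $\epsilon$ at the lattice vertices of the various $\Delta_v$ and $\Delta_e$; as the empty set trivially satisfies the compatibility condition~\eqref{Men}, these local choices assemble into a globally consistent sign assignment, so that $V_\infty\cap(\R^\times)^2$ is a curve of the type produced by the patchworking of Section~\ref{sec:viro} with twist set $T=\emptyset$. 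Recalling that the initial choice of step~(c) is only defined up to the $(\Z/2\Z)^2$-action by axial symmetries, this identifies $V_\infty\cap(\R^\times)^2$ with $C_\emptyset$ up to axial symmetry, and the corresponding statement for $\{P_t=0\}$ with $t$ large then follows from the last assertion of Theorem~\ref{thm:patch mikh}.

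For the remaining assertions I would argue as follows. The curve just described is the simple Harnack curve associated to $C$, which is maximal; this is Itenberg's theorem (see~\cite{IV2,Mik11}), and it yields maximality of $\{P_t=0\}$ for $t$ large. Finally, the values of $\epsilon(i,j)$ depend only on the parities of $i$ and $j$, so the local sign data entering the above construction is independent of the chosen non-singular $C$; together with Mikhalkin's uniqueness of simple Harnack curves~\cite{Mik11}, this gives that the isotopy type of the real part in $(\R^\times)^2$, up to axial symmetries, depends only on the Newton polygon.

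The step I expect to be most delicate is the second one: while the local coamoeba pictures match the arc diagrams of Figure~\ref{fig:patch gen} essentially by inspection, one must check that the binomial gluings are \emph{globally} the untwisted ones, so that the resulting twist set is genuinely empty rather than some nonzero $T$, and that the quadrants prescribed by $\epsilon$ at the two ends of every edge agree with the two quadrants selected along that edge. This global compatibility is exactly what reduces, via~\eqref{Men} for $T=\emptyset$, to the triviality of the sign obstruction.
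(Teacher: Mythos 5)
First, note that the paper does not actually prove this proposition: it is stated as a known result of Itenberg, with references to \cite{IV2} and {\cite[Remark 3.9]{BIMS15}}, so there is no internal proof to compare yours against. Your proposal is an attempt to reconstruct the argument behind those references, and the route you choose (Theorem \ref{thm:patch mikh} combined with the local coamoeba descriptions of Examples \ref{exa:binom} and \ref{exa:trinom}) is the natural one, consistent with how the paper uses this material in its proof of Theorem \ref{thm:haas}.

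There is, however, a genuine gap at exactly the point you flag as delicate, and the justification you offer there does not close it. You write that ``as the empty set trivially satisfies the compatibility condition \eqref{Men}, these local choices assemble into a globally consistent sign assignment, so that $V_\infty\cap(\R^\times)^2$ is a curve of the type produced by the patchworking with twist set $T=\emptyset$.'' This is a non sequitur: \emph{every} twist-admissible set $T$ yields a globally consistent sign assignment --- that is precisely what condition \eqref{Men} guarantees --- so consistency alone cannot single out $T=\emptyset$ among all admissible sets. What your local analysis actually shows is that the real part of $V_\infty$ is a curve of the form $C_T$ for \emph{some} twist-admissible $T$ determined by $\epsilon$; the missing step is the verification that this $T$ is empty. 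Concretely, for each edge $e\in\Ed^0(C)$, dual to a primitive segment $\Delta_e$ shared by two unimodular triangles $\Delta_v$ and $\Delta_{v'}$, one must compare the quadrants selected by the real trinomials $P_1^{\Delta_v}$ and $P_1^{\Delta_{v'}}$ and check that the two strands over $e$ join the arcs at $v$ and $v'$ as in Figure \ref{fig:patch gen}b rather than \ref{fig:patch gen}c; this is a finite computation involving the values of $\epsilon$ at the four lattice points of $\Delta_v\cup\Delta_{v'}$ and the parity class of $(x_e,y_e)$, and it is precisely the content of {\cite[Remark 3.9]{BIMS15}}. Either carry out this computation or cite it explicitly; as written, the identification $T=\emptyset$ is asserted rather than proved. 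The remaining assertions (maximality, and dependence of the isotopy type only on the Newton polygon) are acceptably handled by citation to \cite{IV2} and \cite{Mik11}, in line with the attribution of the proposition itself.
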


We are now ready to deduce Haas' Theorem from what is discussed above.
\begin{proof}[Proof of Theorem \ref{thm:haas}]
Let $C$ be a non-singular 
tropical curve in $\R^2$ defined by the
tropical polynomial 
$\tg \sum b_{i,j}x^iy^j \td $, and let  $\nu:\Z^2\to \{\pm 1\}$ be
some function. 
We define
$$P_t(z,w)=\sum  \epsilon(i,j)t^{-b_{i,j}}z^iw^j, \quad
V_\infty^0=\lim_{t\to+\infty}H_t\left(\{P_t=0\}\right) ,
$$
and
$$R_t(z,w)=\sum  \nu(i,j)t^{-b_{i,j}}z^iw^j,\quad
 V_\infty=\lim_{t\to+\infty}H_t\left(\{R_t=0\}\right). $$
We equipped both $V_\infty$ and  $V_\infty^0$ with the real structure coming from the
restriction of the complex conjugation on $(\C^\times)^2$.
Compactifying $C$ by gluing a point to each unbounded edge $C$, we
obtain
a graph $\Gamma$.
The map $\Log:V_\infty^0\to C$ induces a pair of pants decomposition of
the topological surface $V_\infty^0$. To each unbounded edge of $C$
corresponds an unbounded cylinder of  $V_\infty^0$. 
A surface $S_\Gamma$ as in Section \ref{sec:graph surf} is obtained by
gluing a disk to each such unbounded cylinder of  $V_\infty^0$, and we
have the identities
$$S_v= \Log^{-1} (v)\cap V^0_\infty\ \ \forall v\in \Ve(C), \quad
\mbox{and}\quad S_e=\Log^{-1} (e)\cap V^0_\infty  \ \ \forall e\in\Ed(C),$$
 up to considering degenerate pairs of pants for $S_v$ instead of usual ones in the construction of $S_\Gamma$ in Section \ref{sec:graph surf}. All previous definitions and results are easily seen to hold with this benign substitution.
 Furthermore, the real structure on   $V_\infty^0$ induces a real structure
$\tau_{\Gamma,1}$ 
 above 
the real graph $(\Gamma,\Id)$.

Since $C$ in non-singular, for any vertex $v$  of $C$ there exists 
 $(\mu_1,\mu_2)\in\{\pm 1\}^2$ such that
$$R_t^{\Delta_v}(\mu_1 z, \mu_2 w)= \pm P_t^{\Delta_v}( z, w). $$
In particular, the  map $s_v:(z,w)\mapsto (\mu_1z,\mu_2w)$ induces a
real homeomorphism (i.e. commuting with  real structures)
$$\zeta_v:\Log^{-1} (v)\cap V_\infty \to S_v.$$
Let $\zeta:V_\infty\to V_\infty^0$ be a (not necessarily real)
homeomorphism restricting to $\zeta_v$ on $\Log^{-1} (v)\cap V_\infty$
for each vertex $v$ of $C$. Note that $\zeta$ is well defined up to
isotopy and
  conjugation by a finite product of  Dehn twists along cylinders
  $S_e$ with $e\in\Ed^0(\Gamma)$.
Furthermore, 
given an edge $e\in \Ed^0(\Gamma)$ adjacent to two vertices $v$ and $v'$,
the two maps $s_v$ and $s_{v'}$  coincide if and only if, up to 
an isotopy restricting on the identity on $\partial S_e$,
 the restriction of
 $\zeta$ to $\Log^{-1} (e)\cap V_\infty$ is a real map. 
 The real structure on $V_\infty$ induces, via the map $\zeta$,
a real structure on $V_\infty^0$, which in its
turn induces a real structure $\tau_{\Gamma,2}$ 
 above the real graph $(\Gamma,\Id)$.
Let $T$ be the set of edges for which  $\zeta\restrict{\Log^{-1} (e)\cap
V_\infty}$ is not a real map. 
By Lemma \ref{lem:2 patch}, we have 
$$\tau_{\Gamma,2}-\tau_{\Gamma,1}=\sum_{e\in T}e.$$
On the other hand, by {\cite[Theorem 3.4 and Remark 3.9]{BIMS15}},
the set $T$ is twist-admissible and the real part of the  algebraic curve in 
$(\C^\times)^2$ with equation 
$R_t(z,w) =0$
is isotopic in $(\R^\times)^2$  to the curve $C_T$ for $t$ large enough.
Hence by Corollary \ref{cor:patch max} and Proposition \ref{prop:equiv
  W} applied to $\tau_{\Gamma,1}$ and $\tau_{\Gamma,2}$, 
the real  algebraic curve in 
$R_t(z,w) =0$ for $t$ large enough is maximal if and only if $T$ is maximal.

\medskip
To  finish 
 the proof of Haas' Theorem, it is enough to notice that for any set $T$ of
twist-admissible edges  of $C$, there exists a function   $\nu:\Z^2\to
\{\pm 1\}$ as above such that the real part of the  algebraic curve in 
$(\C^\times)^2$ with equation 
$$\sum  \nu(i,j)t^{-b_{i,j}}z^iw^j =0$$
is isotopic in $(\R^\times)^2$  to the curve $C_T$ for $t$ large enough,
see
{\cite[Remark 3.9]{BIMS15}}. \hfill $\Box$
\end{proof}

\providecommand{\bysame}{\leavevmode\hbox to3em{\hrulefill}\thinspace}
%
%

\bibliographystyle{amsalpha}

\begin{thebibliography}{IKMZ16}
\bibitem[BIMS15]{BIMS15} E. Brugall{\'e}, I. Itenberg, G. Mikhalkin, and K.
Shaw, \emph{Brief introduction to tropical geometry}. In:
Proceedings of the {G}\"okova {G}eometry-{T}opology {C}onference
2014, pp. 1--75, G\"okova Geometry/Topology Conference (GGT),
G\"okova, 2015. \MR{3381439}

\bibitem[BR90]{RisBen90} R. Benedetti and J.-J. Risler,
\emph{Real algebraic and semi-algebraic sets}, Hermann, Paris, 1990.
\MR{1070358}

\bibitem[BR12]{BaRa12} R. Balakrishnan and K. Ranganathan,
\emph{A Textbook of Graph Theory}, second edition, Springer, New York, 2012.
\MR{2977757}

\bibitem[Bru06]{Br2} E. Brugall\'e, \emph{Real plane algebraic curves with asymptotically maximal number of even ovals}, Duke Math. J. \textbf{131}
(2006), no. 3, 575--587. \MR{2219251}

\bibitem[Haa95]{Haa} B. Haas, \emph{Les multilucarnes: nouveaux contre-exemples \`a la conjecture
              de {R}agsdale},
C. R. Acad. Sci. Paris S\'er. I Math. \textbf{320} (1995), no. 12,
1507--1512. \MR{1340062}

\bibitem[Haa97]{Haa2} B. Haas, \emph{Real algebraic curves and combinatorial
constructions}. Th{\`e}se doctorale, Universit{\'e} de Strasbourg,
1997.


\bibitem[IKMZ16]{IKMZ} I. Itenberg, L. Katzarkov, G. Mikhalkin, and I. Zharkov,
\emph{Tropical homology}, preprint, 2016.
\href{https://arxiv.org/abs/1604.01838}{arXiv:1604.01838}


\bibitem[Ite93]{I3} I. Itenberg, \emph{Contre-exemples \`a la conjecture de {R}agsdale},
C. R. Acad. Sci. Paris S\'er. I Math. \textbf{317} (1993), no. 3,
277--282. \MR{1233426}

\bibitem[IV96]{IV2} I. Itenberg and O. Ya. Viro,
\emph{Patchworking algebraic curves disproves the {R}agsdale
conjecture}, Math. Intelligencer \textbf{18} (1996), no. 4, 19--28.
\MR{1413249}

\bibitem[Kle76]{Klein-harnack-bound} F. Klein,
\emph{Ueber eine neue {A}rt von {R}iemann'schen {F}l\"achen}, Math.
Ann. \textbf{10} (1876), no. 3, 398--416. \MR{1509893}

\bibitem[Mik00]{Mik11} G. Mikhalkin, \emph{Real algebraic curves, the moment map and amoebas}, Ann. of Math. (2) \textbf{151} (2000), no. 1,
309--326. \MR{1745011}

\bibitem[Mik05]{Mik1} G. Mikhalkin,
\emph{Enumerative tropical algebraic geometry in $\mathbb R^2$}, J.
Amer. Math. Soc. \textbf{18} (2005), no. 2, 313--377. \MR{2137980}

\bibitem[Pet33]{Pet} I. G. Petrovsky, \emph{Sur la topologie
des courbes r\'eelles alg\'ebriques}, C. R. Acad. Sci. Paris S\'er.
I Math. \textbf{197} (1933), 1270--1272.

\bibitem[Rag06]{Rag} V. Ragsdale, \emph{On the arrangement
of the real branches of plane algebraic curves}, Am. J. Math.
\textbf{28} (1906), no. 4, 377--404. \MR{1505999}


\bibitem[Vir84]{Viro} O. Ya. Viro,
\emph{Gluing of plane real algebraic curves and constructions of
              curves of degrees {$6$} and {$7$}}.
In: Topology (Leningrad, 1982), pp. 187--200, Lecture Notes in
Math., vol. 1060, Springer, Berlin, 1984. \MR{0770238}


\bibitem[Vir01]{V9} O. Ya. Viro,
\emph{Dequantization of real algebraic geometry on logarithmic
paper}. In: European Congress of Mathematics, Vol. I (Barcelona,
2000), pp. 135--146, Progr. Math., vol. 201, Birkh\"auser, Basel,
2001. \MR{1905317}
\end{thebibliography}
\bibliographymark{References}

\end{document}